\def\ds{\displaystyle}
\def\rg{\rangle}
\def\lg{\langle} 
\def \to{\rightarrow}
\def \states{\mathbb{T}^d}
\def \T{\mathbb{T}}
\def \R {\mathbb{R}}
\def \N {\mathbb{N}}
\def \Z {\mathbb{Z}}
\def \mes{\mathcal{P}}
\def\Pk{\mes(\states)}
\def \mint{\int_{\states}}
\def \d{\mathrm{d}}
\def\dk{{\bf d}_1}
\def \abs{|}
\def \leftnorm{\left|}
\def \rightnorm{\right|}
\def \labs{\big|}
\def \ep{\varepsilon}
\newcommand{\be}{\begin{equation}}
\newcommand{\ee}{\end{equation}}
\newtheorem{Theorem}{Theorem}[section]
\newtheorem{Definition}[Theorem]{Definition}
\newtheorem{Proposition}[Theorem]{Proposition}
\newtheorem{Lemma}[Theorem]{Lemma}
\newtheorem{Corollary}[Theorem]{Corollary}
\newtheorem{Remark}[Theorem]{Remark}
\title{Learning in Mean Field Games: the Fictitious Play}
\author{Pierre Cardaliaguet\thanks{Ceremade, Université Paris-Dauphine} \and Saeed Hadikhanloo\thanks{Lamsade, Université Paris-Dauphine}}
\begin{document}

\maketitle

\begin{abstract}
Mean Field Game systems describe equilibrium configurations in differential games with infinitely many infinitesimal interacting agents. We introduce a learning procedure (similar to the Fictitious Play) for these games and show its convergence when the Mean Field Game is  potential. 
\end{abstract}

\tableofcontents

\section{Introduction}

Mean Field Game is a class of differential games in which each agent is infinitesimal and interacts with a huge population of other agents. These games have been introduced simultaneously by Lasry, Lions \cite{LL06cr1, LL06cr2, LL07mf, LLperso} and Huang, Malhamé and Caines \cite{HCMieeeAC06}, (actually a discrete in time version of these games were previously known under the terminology of heterogenous models in economics. See for instance  \cite{Aiy}). The classical notion of solution in Mean Field Game (abbreviated MFG) is given by a pair of maps $(u,m)$, where $u=u(t,x)$ is the value function of a typical small player while $m=m(t,x)$ denotes the density at time $t$ and at position $x$ of the population. The value function $u$ satisfies a Hamilton-Jacobi equation---in which $m$ enters as a parameter and describes the influence of the population on the cost of each agent---, while the density $m$ evolves in time according to a Fokker-Planck equation in which $u$ enters as a drift. More precisely the pair $(u,m)$ is a solution of the {\it MFG system}, which reads 
\be\label{MFGintrtro}
\left\{\begin{split}
(i) \qquad &-\partial_t u - \sigma \Delta u  +H(x,\nabla u(t,x)) =f(x,m(t)) \\
(ii) \qquad &\partial_t m- \sigma \Delta m  - \text{div} ( mD_p H (x,\nabla u)) =0  \\
&m(0,x)=m_0(x) , \; u(T,x)=g(x, m (T)).
\end{split}\right.
\ee
In the above system, $T>0$ is the horizon of the game, $\sigma$ is a nonnegative parameter describing the intensity of the (individual) noise each agent is submitted to (for simplicity we assume that either $\sigma=0$ (no noise) or $\sigma=1$, some individual noise). The map $H$ is the Hamiltonian of the control problem (thus typically convex in the gradient variable). The running cost $f$ and the terminal cost $g$ depend on the one hand on the position of the agent and, on the other hand, on the population density. Note that, in order to solve the (backward) Hamilton-Jacobi equation (i.e., the optimal control problem of each agent) one has to know the evolution of the population density, while the Fokker-Planck equation depends on the optimal strategies of the agents (through the drift term $- \text{div} ( mD_p H (x,\nabla u))$). The MFG system formalizes therefore  {\it an equilibrium configuration}. 

Under suitable assumptions recalled below, the MFG system~\eqref{MFGintrtro} has at least one solution. This solution is even unique under a monotonicity condition on $f$ and $g$. Under this condition, one can also show that it is the limit of symmetric Nash equilibria for a finite number of players  as the number of players tends to infinity \cite{CDLL}; moreover, the optimal strategy given by the solution of the MFG system can be implemented in the game with finitely many players to give an approximate Nash equilibrium \cite{HCMieeeAC06, CaDe2}. MFG systems have been widely used  in several areas ranging from engineering to economics, either under the terminology of heterogeneous agent model \cite{Aiy, Bew, Hug}, or under the name of MFG  \cite{ABLLM, ALLM, GLL2010,HCMdec2003}.\\

In the present paper we raise the question of the actual formation of the MFG equilibrium. Indeed, the game being quite involve, it is unrealistic to assume that the agents can actually {\it compute} the equilibrium configuration. This seems to indicate that, if the equilibrium configuration arises, it is because the agents have {\it learned} how to play the game.  For instance, people driving every day from home to work are dealing with such a learning issue. Every day they try to forecast the traffic and choose their optimal path accordingly, minimizing the journey and/or the consumed fuel for instance. If their belief on the traffic turns out not to be correct, they update their estimation, and so on... The question is wether such a procedure leads to stability or not. 

The question of {\it learning}  is a very classical one in game theory (see, for instance, the monograph \cite{FL98}). There is by now a very large number of learning procedures for one-shot games in the literature. In the present paper we focus on a very classical and simple one: the Fictitious Play. The Fictitious Play was first introduced by Brown \cite{BrownFictitiousPlay}. In this learning procedure,  every player plays at each step the best response action with respect to the average of the previous actions of the other players. Fictitious Play does not necessarily converge, as shows the counter-example by Shapley \cite{Sha64}, but it is known to converge for several classes of one shot games: for instance for zero-sum games (Robinson \cite{RO51}), for $2\times 2$ games (Miyasawa \cite{Mi61}), for potential games (Monderer and Shapley \cite{MondrerShapley2})... \\
%In particular, Monderer and Shapley \cite{MondrerShapley2} showed that this procedure converges to the Nash equilibrium in the case of Potential $N-$player finite games.\\

Note that, in our setting, the question of learning makes all the more sense that the game is particularly intricate. Our aim is to define a Fictitious Play for the MFG system and to prove the convergence of this procedure under suitable assumption on the coupling $f$ and $g$. The Fictitious Play for the MFG system runs as follows: the players start with a smooth initial belief $(m^0(t))_{t\in [0,T]}$. At the beginning of stage $n+1$, the players having observed the same past, share the same belief $(\overline{m}^{n}(t))_{t\in [0,T]}$ on the evolving density of the population. They compute their corresponding optimal control problem with value function $u^{n+1}$ accordingly.
When all players actually implement their optimal control the population density evolves in time and the players observe the resulting evolution $(m^{n+1}(t))_{t\in [0,T]}$. At the end of stage $n+1$ the players update their belief according to the rule (the same for all the players), which consists in computing the {\it average} of their observation up to time $n+1$. This yields to define by induction the sequences $u^n,m^n,\bar m^n$ by:
\begin{equation}\label{MFG2FPintro}
\left\{\begin{split}
(i) \qquad &-\partial_t u^{n+1} - \sigma\Delta u^{n+1}  +H(x,\nabla u^{n+1}(t,x)) =f(x,\bar{m}^{n}(t)), \\
(ii) \qquad &\partial_t m^{n+1} - \sigma\Delta m^{n+1}  - \text{div} (  m^{n+1}D_p H (x,\nabla u^{n+1})) =0 ,  \\
&m^{n+1}(0)=m_0 , \; u^{n+1}(x,T)=g(x, \bar{m} ^n (T))
\end{split} \right. 
\end{equation}
where $\bar{m}^{n} = \frac{1}{n} \sum_{k=1}^{n} {m}^{k}$. Indeed, $u^{n+1}$ is the value function at stage $n+1$ if the belief of players on the evolving density is $\bar m^n$, and thus solves \eqref{MFG2FPintro}-(i). The actual density then evolves according to the Fokker-Planck equation \eqref{MFG2FPintro}-(ii). 

Our main result is that, under suitable assumption, this learning procedure converges, i.e., any cluster point of the pre-compact sequence $(u^n, m^n)$ is a solution of the MFG system~\eqref{MFGintrtro} (by compact, we mean compact for the uniform convergence).  Of course, if in addition the solution of the MFG system~\eqref{MFGintrtro} is unique, then the full sequence converges. Let us recall (see \cite{LL07mf}) that this uniqueness holds for instance if $f$ and $g$ are monotone:  
$$
\int (f(x,m)-f(x,m')\ \d (m-m')(x)\geq 0, \qquad \int (g(x,m)-g(x,m')\ \d (m-m')(x)\geq 0
$$
for any probability measure $m,m'$. This condition is generally interpreted as an aversion for congestion for the agents. Our key assumptions for the convergence result is that  {\it  $f$ and $g$ derive from potentials}.
By this we mean that there exists $F=F(m)$ and $G=G(m)$ such that 
$$
f(x,m)= \frac{\delta F}{\delta m}(x,m)\qquad {\rm and}\qquad g(x,m)= \frac{\delta G}{\delta m}(x,m).
$$
The above derivative---in the space of measure---is introduced in subsection \ref{sec.PotMFG}, the definition being borrowed from \cite{CDLL}. Our assumption actually ensures  that our MFG system is also ``a potential game" (in the flavor of Monderer and Shapley \cite{MondrerShapley1}) so that the MFG system falls into a framework closely related to that of Monderer and Shapley \cite{MondrerShapley2}.  Compared to \cite{MondrerShapley2}, however, we face two issues. First we have an infinite population of players and the state space and the actions are also infinite. Second the game has a much more involve structure than in \cite{MondrerShapley2}. In particular, the potential for our game is far from being straightforward. We consider two different frameworks. In the first one, the so-called second order MFG systems where $\sigma =1$---which corresponds to the case where the players have a dynamic perturbed by independent noise---the potential is defined as a map of the evolving population density. This is reminiscent of the variational structure for the MFG system as introduced in \cite{LL07mf} and exploited in \cite{c1, CGPT} for instance. The proof of the convergence then strongly relies on the regularity properties of the value function and of the population density (i.e., of the $u^n$ and $m^n$). The second framework is for first order MFG systems, where $\sigma=0$. In contrast with the previous case, the lack of regularity of the value function and of the population density prevent to define the same Fictitious Play and the same potential. To overcome the difficulty, we lift the problem to the space of curves, which is the natural space of strategies. We define the Fictitious Play and a potential in this setting, and then prove the convergence, first for the infinite population and then for a large, but finite, one. 

As far as we are aware of, our paper is the first one to consider a learning procedure in the framework of mean field games. Let us nevertheless point out that, for a particular class of MFG systems (quadratic Hamiltonians, local coupling), Guéant introduces in \cite{Gu12} an algorithm which is closely related to a replicator dynamics: namely it is exactly \eqref{MFG2FPintro} in which one replaces $\bar m^n$ by $m^n$ in \eqref{MFG2FPintro}-(i)). The convergence is proved by using a kind of monotonicity of the sequence. This monotonicity does not hold in the more intricate framework considered here.  \\

For simplicity we work in the periodic setting: we assume that the maps $H$, $f$ and $g$ are periodic in the space variable (and thus actually defined on the torus $\T^d=\R^d/\Z^d$). This simplifies the estimates and the notation. However we do not think that the result changes in a substantial way if the state space is $\R^d$ or a subdomain of $\R^d$, with suitable boundary conditions. \\

The paper is organized as follows: we complete the introduction by fixing the main notation and stating the basic assumptions on the data. Then we define the notion of potential MFG and characterize the conditions of deriving from a potential. Section \ref{sec.MFG2} is devoted to the Fictitious Play for second order MFG systems while section \ref{sec.MFG1} deals with the first order ones. \\

{\bf Acknowledgement:} The first author was partially supported by the ANR (Agence Nationale de la Recherche) projects  ANR-10-BLAN 0112, ANR-12-BS01-0008-01 and ANR-14-ACHN-0030-01.

%%%%%%%%%%%%%%%%%%%%%%%
\subsection{Preliminaries and Assumptions}

If $X$ is a metric space, we denote by $\mes(X)$ the set of Borel probability measures on $X$. When $X=\T^d$ ($\T^d$ being the torus $\R^d/\Z^d$), we endow $\mes(\T^d)$ with the distance
\begin{equation}\label{MWdistance}
\dk (\mu,\nu)= \sup_h \left\{ \int_{\T^d} h(x)\ \d (\mu-\nu)(x) \right\}\qquad \mu,\nu \in \mes (\T^d), 
\end{equation}
where the supremum is taken over all the maps $h:\T^d\to \R$ which are 1-Lipschitz continuous. Then $\dk$ metricizes the weak-* convergence of measures on $\T^d$. 

The maps $H$, $f$ and $g$ are periodic in the space arguments: $H:\T^d\times \R^d\to \R$ while $f,g:\T^d\times \mes(\states)\to\R$. In the same way, the initial condition $m_0\in \Pk$ is periodic in space and  is assumed to be absolutely continuous with a smooth density. 
 
We now state our key assumptions on the data: these conditions are valid throughout the paper. On the initial measure $m_0$, we assume that
\begin{equation}\label{hypm0}
\mbox{\rm  $m_0$ has a smooth density (again denoted $m_0$).}
\end{equation}
Concerning the Hamiltonian, 
we suppose that $H$  is of class ${\mathcal C}^2$ on $\T^d\times \R^d$ and quadratic-like in the second variable: 
\begin{equation}\label{hyp:unifCv}
H\in {\mathcal C}^2(\T^d\times \R^d)\; {\rm and }\; \frac{1}{\bar C} I_d\leq D^2_{pp} H(x,p) \leq \bar C I_d \qquad \forall (x,p)\in \T^d\times \R^d\;.
\end{equation}
Moreover, we suppose that $D_xH$ satisfies the lower bound: 
\be\label{hyp:Hcondsupp}
\lg D_xH(x,p), p\rg \geq -C \left(|p|^2+1\right).
\ee
The maps $f$ and $g$ are  supposed to be globally Lipschitz continuous (in both variables) and regularizing:
\begin{equation}\label{regucondf}
\begin{array}{c}
\ds \mbox{\rm The map $m\to f(\cdot,m)$ is Lipschitz continuous from $\mes (\T^d)$ to ${\mathcal C}^2(\T^d)$}\\
\ds \mbox{\rm while the map $m\to g(\cdot,m)$ is Lipschitz continuous from $\mes (\T^d)$ to ${\mathcal C}^3(\T^d)$.}
\end{array}\end{equation}
 In particular, there is $\bar C>0$ such that
\begin{equation}\label{boundC2}
\sup_{m\in P(\T^d)} \left\| f(\cdot,m)\right\|_{{\mathcal C}^2} +\left\| g(\cdot,m)\right\|_{{\mathcal C}^3} \leq \bar C.
\end{equation}
Assumptions \eqref{hypm0}, \eqref{hyp:unifCv}, \eqref{hyp:Hcondsupp}, \eqref{regucondf}, \eqref{mono} are in force throughout the paper. As explained below, they ensure the MFG system to have at least one solution. 

To ensure the uniqueness of the solution, we sometime require $f$ and $g$ to be monotone: for any $m,m'\in \mes(\states)$, 
\begin{equation}\label{mono}
\int_{\states} (f(x,m)-f(x,m'))d(m-m')(x)\geq 0, \; \int_{\states} (g(x,m)-g(x,m'))d(m-m')(x)\geq 0.
\end{equation}
This condition can be interpreted as a dislike of congested area by the agent.

%%%%%%%%%%%%%%%%%%%%%%%%%%%%%%%%%%%
%%%%%%%%%%%%%%%%%%%%%%%%%%%%%%%%%%
\subsection{ Potential Mean Field Games}\label{sec.PotMFG}

In this section we introduce the main structure condition on the data $f$ and $g$ of the game: we assume that $f$ and $g$ are the derivative, with respect to the measure, of potential maps $F$ and $G$. In this case we say that $f$ and $g$ derive from a potential. 

Let us first explain what we mean by a derivative with respect to a measure. Let $F:\mes(\states)\to \R$ be a continuous map. We say that the continuous map $\frac{\delta F}{\delta m}:\states\times \mes(\states)\to \R$ is the derivative of $F$ if, for any $m,m'\in \mes(\states)$, 
\begin{equation}\label{Potential}
\lim_{s\to 0} \frac{F((1-s)m+sm')-F(m)}{s} = \int_{\states} \frac{\delta F}{\delta m}(m,x)\d (m'-m)(x).
\end{equation}
As $\frac{\delta F}{\delta m}$ is continuous, this equality can be equivalently written as 
$$
F(m')-F(m)= \int_0^1\mint \frac{\delta F}{\delta m}((1-s)m+sm'),x)\d (m'-m)(x) \d s, 
$$  
for any $m,m'\in \mes(\states)$. 
Note that $\frac{\delta F}{\delta m}$ is defined only up to an additive constant. To fix the ideas we assume therefore that 
$$
\int_{\T^d} \frac{\delta F}{\delta m}(m,x)\d m(x)=0\qquad \forall m\in \mes(\T^d).
$$
We often use the notation
$$
\frac{\delta F}{\delta m}(m)(m'-m):=  \int_{\states} \frac{\delta F}{\delta m}(x,m)d(m'-m)(x).
$$

\begin{Definition}
A Mean Field Game is called a Potential Mean Field Game if the instantaneous and final cost functions $f,g : \states \times \mes(\states) \to \R$ derive from potentials, i.e., there exists $F,G : \mes(\states) \to \R$ such that
$$
\frac{\delta F}{\delta m}= f, \qquad  \frac{\delta G}{\delta m}= g.
$$
\end{Definition}

In the rest of the section we characterize the maps $f$ which derive from a potential. Although this is not used in the rest of the paper, this characterization is natural and we believe that it has its own interest. \\

To proceed we assume for the rest of the section that, for any $x\in \states$, $f(x,\cdot)$ has a derivative and that this derivative $\frac{\delta f}{\delta m}:\states \times \mes(\states)\times \states\to \R$ is continuous. 
%Then, for any $m,m'\in \mes(\states)$, 
%$$
%f(x, (1-s)m + sm' ) = f(x,m) + s \int_{\states} \frac{\delta f}{\delta m } (x,m,y) d(m'-m)(y) + o(s),
%$$
%where $\lim_{s \to 0} \frac{o(s)}{s}=0$. 

\begin{Proposition}\label{prop.potential}
The map $f:\states\times \mes(\states)\to \R$ derives from a potential, if and only if, 
$$
\ds \frac{\delta f}{\delta m}(y,m,x)= \frac{\delta f}{\delta m}(x,m,y)\qquad \forall x,y\in \T^d, \; \forall m \in \mes(\T^d).
$$ 
\end{Proposition}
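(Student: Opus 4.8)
The statement is the infinite-dimensional analogue of the classical fact that a $1$-form is exact if and only if it is closed: here the ``form'' is the field $m\mapsto f(\cdot,m)$ and its ``exterior derivative'' is $\frac{\delta f}{\delta m}$, the asserted symmetry being precisely the closedness (Schwarz) condition. The plan is to prove the two implications separately, necessity being the Schwarz symmetry of a second derivative and sufficiency being a Poincar\'e-type construction of the potential by a line integral. Throughout I keep in mind that $\frac{\delta f}{\delta m}(x,m,\cdot)$ is defined only up to an additive constant, so every identity below is read against increments $m''-m$, which are signed measures of total mass zero; this is what makes the choice of representative harmless.

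For the necessity, assume $f=\frac{\delta F}{\delta m}$. Fix $m,m_1,m_2\in\Pk$ and set $\mu_{s,t}=(1-s-t)m+sm_1+tm_2$, which lies in $\Pk$ for $s,t\ge 0$, $s+t\le 1$; consider $\Phi(s,t)=F(\mu_{s,t})$. Differentiating under the integral sign and applying the defining relation \eqref{Potential} twice expresses $\partial_t\partial_s\Phi$ and $\partial_s\partial_t\Phi$ as the two iterated integrals of $\frac{\delta f}{\delta m}(x,\mu_{s,t},y)$ against $\d(m_1-m)$ and $\d(m_2-m)$, taken in the two possible orders. Since $\frac{\delta f}{\delta m}$ is continuous, $\Phi$ is $\mathcal C^2$ and Schwarz's theorem equates the two mixed partials; evaluating at $s=t=0$ shows that the bilinear form $(\mu,\nu)\mapsto\mint\int_{\states}\frac{\delta f}{\delta m}(x,m,y)\,\d\mu(x)\,\d\nu(y)$ is symmetric on signed measures $\mu,\nu$ of total mass zero. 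Testing against $\mu=\delta_a-m$ and $\nu=\delta_b-m$ and using continuity then upgrades this to the pointwise symmetry of $\frac{\delta f}{\delta m}$, for the appropriate choice of the additive constant.

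For the sufficiency, assume the symmetry and build a potential by integrating $f$ along the segment joining a fixed base point $m_0\in\Pk$ to $m$:
\be
F(m)=\int_0^1 \mint f(x,m_t)\,\d(m-m_0)(x)\,\d t,\qquad m_t:=(1-t)m_0+tm .
\ee
To verify $\frac{\delta F}{\delta m}=f$ I differentiate $F((1-\ep)m+\ep m'')$ at $\ep=0$. Two contributions appear: one where the derivative hits the integrand $f(x,m_t)$, giving $t\mint\int_{\states}\frac{\delta f}{\delta m}(x,m_t,y)\,\d(m''-m)(y)\,\d(m-m_0)(x)$, and one where it hits the measure $\d(m-m_0)$, giving $\psi(t):=\mint f(x,m_t)\,\d(m''-m)(x)$. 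The symmetry hypothesis is the crux here: it lets me exchange the two spatial variables in the first contribution and recognize the inner $x$-integral as $\frac{\d}{\d t}f(y,m_t)$ (since $\dot m_t=m-m_0$), so that the first contribution equals $t\psi'(t)$. Hence $\frac{\d}{\d\ep}\big|_0 F=\int_0^1\big(t\psi'(t)+\psi(t)\big)\,\d t=\int_0^1\big(t\psi(t)\big)'\,\d t=\psi(1)=\mint f(x,m)\,\d(m''-m)(x)$, which is exactly \eqref{Potential} for $\frac{\delta F}{\delta m}=f$.

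I expect the main obstacle to lie not in any single clever step but in the rigor of the infinite-dimensional calculus: justifying differentiation under the $\d t$- and $\d x$-integrals and, above all, the validity of Schwarz's theorem on the simplex of measures, which is exactly where the continuity of $\frac{\delta f}{\delta m}$ is used. A secondary but genuine point is the bookkeeping of the additive constant. The symmetry is intrinsically meaningful only modulo functions of the form $\alpha(x,m)+\beta(y,m)$; however, in both implications $\frac{\delta f}{\delta m}$ is integrated only against total-mass-zero increments, so such terms drop out and neither direction is affected. This is why the construction is robust and why the normalization convention fixing $\frac{\delta F}{\delta m}$ plays no essential role in the argument.
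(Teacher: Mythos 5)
Your proof is correct, and its sufficiency half coincides with the paper's argument: the same Poincar\'e-type potential $F(m)=\int_0^1\mint f(x,(1-t)m_0+tm)\,\d(m-m_0)(x)\,\d t$, the same use of the symmetry hypothesis to recognize $\mint \frac{\delta f}{\delta m}(x,m_t,y)\,\d(m-m_0)(x)$ as $\frac{\d}{\d t}f(y,m_t)$, and the same integration by parts $\int_0^1\big(t\psi(t)\big)'\,\d t=\psi(1)$ (you phrase it as a directional derivative tested against $m''-m$ rather than computing $\frac{\delta F}{\delta m}(m,y)$ pointwise, which is an immaterial difference of presentation). Where you genuinely diverge is the necessity half: the paper disposes of it in two lines by writing $\frac{\delta f}{\delta m}(x,m,y)=\frac{\delta^2 F}{\delta m^2}(m,x,y)$ and citing \cite{CDLL} for the symmetry of second-order derivatives in the measure, whereas you reprove that symmetry from scratch via Schwarz's theorem applied to $\Phi(s,t)=F((1-s-t)m+sm_1+tm_2)$, followed by testing the resulting symmetric bilinear form against the zero-mass increments $\delta_a-m$ and $\delta_b-m$. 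This buys self-containedness, and your bookkeeping of the additive-constant ambiguity is in fact more careful than the paper's, which silently works with normalized representatives: the bilinear identity alone only yields $\frac{\delta f}{\delta m}(x,m,y)-\frac{\delta f}{\delta m}(y,m,x)=\phi(x)-\phi(y)$ for some continuous $\phi$, and exact pointwise symmetry is then recovered by absorbing $\phi$ into the free constant $c(x,m)$ of the representative --- precisely the adjustment your final paragraph allows for; as you also observe, this ambiguity is harmless in the sufficiency direction because there $\frac{\delta f}{\delta m}$ is only integrated against measures of total mass zero (any residual $t$-dependent constant merely shifts $\frac{\delta F}{\delta m}(m,\cdot)$ by a constant). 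The one step to spell out in a full write-up is that $\Phi$ is of class $C^2$ on the simplex, i.e., the differentiation under the integral signs, which uses the assumed joint continuity of $\frac{\delta f}{\delta m}$ on the compact set $\states\times\Pk\times\states$; you correctly flag this as the locus of rigor, and it is routine.
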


\begin{proof} First assume that $f$ derives from a potential $F:\mes(\states)\to \R$. Deriving in $m$ the relation $\frac{\delta F}{\delta m}=f$ we obtain
$$
\frac{\delta^2 F}{\delta m^2}(m,x,y)= \frac{\delta f}{\delta m}(x,m,y)\qquad \forall x,y\in \states, \; m\in \mes(\states).
$$
As $\frac{\delta^2 F}{\delta m^2}(m,x,y)$ is symmetric in $(x,y)$ (see \cite{CDLL}), so is $\frac{\delta f}{\delta m}(x,m,y)$. 

Let us now assume that $\frac{\delta f}{\delta m}(x,m,y)$ is symmetric in $(x,y)$. Let us fix $m_0\in \mes(\states)$ and set, for any $m\in \mes(\states)$, 
$$
F(m)=\int_0^1 \int_{\states} f(x,(1-t)m_0+tm) {\rm d}(m-m_0)(x) {\rm d}t.
$$
We claim that $F$ is a potential for $f$. Indeed, as $f$ has a continuous derivative, so has $F$, with 
\be\label{hazberosuldi}
\begin{array}{rl}
\ds \frac{\delta F}{\delta m}(m,y)\; = & \ds \int_0^1 t \int_{\states} \frac{\delta f}{\delta m}(x,(1-t)m_0+tm,y)\ {\rm d}(m-m_0)(x) {\rm d}t\\
& \qquad \ds + 
\int_0^1 f(y,(1-t)m_0+tm) {\rm d}t.
\end{array}
\ee
As, by symmetry assumption,
$$
\begin{array}{rl}
\ds \frac{d}{dt}  f(y,(1-t)m_0+tm)  \; = & \ds \int_{\states}\frac{\delta f}{\delta m}(y,(1-t)m_0+tm,x) d(m-m_0)(x)\\
= & \ds \int_{\states}\frac{\delta f}{\delta m}(x,(1-t)m_0+tm,y) d(m-m_0)(x) ,
\end{array}
$$
we have therefore after integration by parts in \eqref{hazberosuldi},  
$$
\frac{\delta F}{\delta m}(m,y)=\Big[ t\ f(x,(1-t)m_0+tm)\Big]_0^1 = f(x,m).
$$
\end{proof}

%%%%%%%%%%%%%%%%%%%%%%%%%%%%%%%%%%%%%%%
%%%%%%%%%%%%%%%%%%%%%%%%%%%%%%%%%%%%%%%
%%%%%%%%%%%%%%%%%%%%%%%%%%%%%%%%%%%%%%
%%%%%%%%%%%%%%%%%%%%%%%%%%%%%%%%%%%%%

\section{The Fictitious Play for second order MFG systems}\label{sec.MFG2}

In this section, we study a learning procedure for the second order MFG system: 
\begin{equation}\label{MFG2}
\left\{\begin{split}
(i) \qquad &-\partial_t u - \Delta u  +H(x,\nabla u(t,x)) =f(x,m(t)), \quad (t,x)\in  [0,T]\times \states  \\
(ii) \qquad &\partial_t m- \Delta m  - \text{div} ( mD_p H (x,\nabla u)) =0 , \quad (t,x)\in  [0,T]\times \states  \\
&m(0)=m_0 , \; u(x,T)=g(x, m (T)), \quad  x\in \states.
\end{split}\right.
\end{equation}
Let us recall (see \cite{LL07mf}) that, under our assumptions \eqref{hypm0}, \eqref{hyp:unifCv}, \eqref{hyp:Hcondsupp}, \eqref{regucondf}, there exists at least one classical solution to \eqref{MFG2} (i.e., for which all the involved derivative exists and are continuous). If furthermore \eqref{mono} holds, then the solution is unique. 

%%%%%%%%%%%%%%%%%%%%%%%%%%%
\subsection{The learning rule and the convergence result}

The Fictitious Play can be written as follows: given a smooth initial guess $m^0\in  C^0([0,T], \mes(\T^d))$, we define by induction sequences $u^n,m^n:[0,T]\times \T^d\to \R$ by:
\begin{equation}\label{MFG2FP}
\left\{\begin{split}
(i) \qquad &-\partial_t u^{n+1} - \Delta u^{n+1}  +H(x,\nabla u^{n+1}(t,x)) =f(x,\bar{m}^{n}(t)), \quad (t,x)\in  [0,T]\times \states  \\
(ii) \qquad &\partial_t m^{n+1} - \Delta m^{n+1}  - \text{div} (  m^{n+1}D_p H (x,\nabla u^{n+1})) =0 , \quad (t,x)\in  [0,T]\times \states  \\
&m^{n+1}(0)=m_0 , \; u^{n+1}(x,T)=g(x, \bar{m} ^n (T)), \quad  x\in \states
\end{split} \right. 
\end{equation}
where $\bar{m}^{n} (t,x) = \frac{1}{n} \sum_{k=1}^{n} {m}^{k}(t,x)$. The interpretation is that, at the beginning of stage $n+1$, the players have the same belief of the future density of the population  $(\overline{m}^{n}(t))_{t\in [0,T]}$ and compute their corresponding optimal control problem with value function $u^{n+1}$. Their optimal (closed-loop) control is then $(t,x)\to -D_pH(x,\nabla u^{n+1}(t,x))$. When all players actually implement this control the population density evolves in time according to \eqref{MFG2FP}-(ii). We assume that the players observe the resulting evolution of the population density $(m^{n+1}(t))_{t\in [0,T]}$. At the end of stage $n+1$ the players update their guess by computing the {\it average} of their observation up to time $n+1$.

In order to show the convergence of the Fictitious Play, we assume  that the MFG is potential, i.e. there are potential functions $F,G : \mes (\states) \to \R$ such that
\be\label{potpot}
f(x,m)=\frac{\delta F}{\delta m}(m,x) \qquad {\rm and}\qquad g(x,m)=\frac{\delta G}{\delta m}(m,x).
\ee
We also assume that $m_0$ is smooth and positive.

\begin{Theorem}\label{MainTheoremSecondOreder} Under the assumptions \eqref{hypm0}, \eqref{hyp:unifCv}, \eqref{hyp:Hcondsupp}, \eqref{regucondf} and \eqref{potpot}, the family $\{ (u^n , m^n) \}_{n \in \N}$ is uniformly continuous and any cluster point is a solution to the  second order MFG \eqref{MFG2}. 

If, in addiction, the monotonicity condition \eqref{mono} holds, then the whole sequence $\{ (u^n , m^n) \}_{n \in \N}$ converges to the unique solution of  \eqref{MFG2}. 
\end{Theorem}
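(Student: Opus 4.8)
The plan is to exploit the potential structure of the game to construct a Lyapunov-type functional that decreases (in an averaged sense) along the Fictitious Play iterates, and then combine this with uniform regularity estimates to pass to the limit. The natural candidate for the potential is the functional
\[
\Phi(m) = \int_0^T \int_{\states} \left( L(x, -D_pH(x,\nabla u)) \, m + \mathcal{F}(m(t)) \right) \d x\, \d t + \mathcal{G}(m(T)),
\]
following the variational interpretation of \cite{LL07mf, c1, CGPT}, where $L$ is the Lagrangian associated to $H$ and $F,G$ are the potentials from \eqref{potpot}. The first step is to establish \emph{uniform a priori estimates}: using the quadratic-like assumption \eqref{hyp:unifCv}, the lower bound \eqref{hyp:Hcondsupp}, and the regularizing property \eqref{regucondf} (which guarantees $f(\cdot,\bar m^n)$ is uniformly $\mathcal C^2$ and $g(\cdot,\bar m^n)$ uniformly $\mathcal C^3$), I would derive bounds on $\nabla u^{n+1}$, on higher derivatives of $u^{n+1}$ via parabolic Schauder estimates for the Hamilton--Jacobi equation \eqref{MFG2FP}-(i), and correspondingly on $m^{n+1}$ via the Fokker--Planck equation \eqref{MFG2FP}-(ii). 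Since the right-hand sides depend on the data only through the uniformly bounded couplings, these estimates are uniform in $n$, yielding equicontinuity and hence precompactness of $(u^n,m^n)$ for uniform convergence.

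The heart of the argument is the \emph{monotonicity (energy decrease) computation}. I would consider the evolution of the potential evaluated along the belief $\Phi(\bar m^n)$, or more precisely track the quantity measuring the discrepancy between $u^{n+1}$ and the optimal response. The key idea, borrowed from the theory of potential games in the spirit of Monderer--Shapley \cite{MondrerShapley2}, is that because $u^{n+1}$ is a best response to the belief $\bar m^n$, the change in the potential when passing from $\bar m^n$ to $\bar m^{n+1} = \frac{n}{n+1}\bar m^n + \frac{1}{n+1} m^{n+1}$ can be estimated using the derivative relation \eqref{Potential} together with the definition $f = \delta F/\delta m$, $g = \delta G/\delta m$. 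Concretely, by convexity of $L$ and the duality between the Hamilton--Jacobi and Fokker--Planck equations (an integration by parts pairing $u^{n+1}$ against the continuity equation for $m^{n+1}$ and for $\bar m^n$), one obtains that a suitable functional along the sequence has summable increments, forcing
\[
\dk\bigl(m^{n+1}(t), \bar m^n(t)\bigr) \to 0
\]
in an averaged sense, which says the observed density asymptotically matches the belief.

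Once this asymptotic consistency is in hand, I would extract a cluster point $(u,m)$ from the precompact sequence. Along a convergent subsequence, $\bar m^n \to m$ uniformly (using $\dk(m^{n+1},\bar m^n)\to 0$ together with a Cesàro argument to identify $\lim \bar m^n = \lim m^n$), so the couplings $f(\cdot,\bar m^n) \to f(\cdot,m)$ and $g(\cdot,\bar m^n)\to g(\cdot,m)$ by the continuity in \eqref{regucondf}. Passing to the limit in the weak (or classical, thanks to the uniform higher-order bounds) formulation of \eqref{MFG2FP} then shows $(u,m)$ solves \eqref{MFG2}. Finally, under the monotonicity assumption \eqref{mono}, the solution of \eqref{MFG2} is unique by \cite{LL07mf}, so every cluster point coincides and the whole sequence converges.

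The main obstacle I anticipate is the energy-decrease step: one must identify the correct functional whose increments are controlled, and carefully handle the cross terms that arise because the Fictitious Play updates the \emph{average} $\bar m^n$ rather than $m^n$ itself. Unlike the replicator-type scheme of \cite{Gu12}, no simple monotonicity of the raw sequence is available, so the convexity of $H$ in $p$ (quantified by \eqref{hyp:unifCv}) must be used to extract a genuinely negative second-order term that dominates the error, and the summability of the $1/(n+1)$ weights must be leveraged to conclude. Justifying the exchange of limits and derivatives in the potential, and ensuring all the integration-by-parts manipulations are legitimate, relies crucially on the uniform regularity established in the first step.
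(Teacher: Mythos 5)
Your overall architecture does match the paper's (uniform Schauder-type estimates, a variational potential that almost-decreases through the best-response property and uniform convexity of $H$, limit passage, uniqueness under \eqref{mono}), but there is a genuine gap at the heart of the energy-decrease step: the potential as you wrote it, $\Phi(m)$ with kinetic term $L(x,-D_pH(x,\nabla u))\,m$, is ill-defined along the belief $\bar m^n$, because it leaves unspecified which value function $u$ accompanies $\bar m^n$ --- and no single one does: the averaged density $\bar m^n$ is \emph{not} driven by any drift of the form $-D_pH(\cdot,\nabla u)$. The paper's key device, which your formula cannot accommodate, is to promote the momentum to an independent variable: set $w^n=-m^nD_pH(\cdot,\nabla u^n)$, average it separately, $\bar w^n=\frac1n\sum_{k=1}^n w^k$, and evaluate the two-variable potential $\Phi(m,w)=\int_0^T\mint mH^*(x,-w/m)+\int_0^T F(m(t))\,\d t+G(m(T))$ at the pair $(\bar m^n,\bar w^n)$. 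By linearity of the Fokker--Planck equation, $(\bar m^n,\bar w^n)$ still solves a continuity equation, and this is exactly what legitimizes the integration-by-parts pairing with $u^{n+1}$ that you invoke. The negative term produced by the quantitative Fenchel gap $H(x,p)+H^*(x,q)-\lg p,q\rg\geq \frac{1}{2\bar C}|q-D_pH(x,p)|^2$ is then $a_n=\int_0^T\mint \bar m^{n+1}\labs \bar w^{n+1}/\bar m^{n+1}-w^{n+1}/m^{n+1}\labs^2$, i.e., a discrepancy of \emph{drifts}, not directly $\dk(m^{n+1},\bar m^n)$ as you claim; converting it into information on densities requires the lower bound $m^n\geq 1/C$ (strong maximum principle) and, in the paper, is done only at the limit by observing that $m$ and $\bar m$ solve the same Fokker--Planck equation with drift $-D_pH(\cdot,\nabla u)$ and invoking uniqueness, rather than by your Ces\`aro argument.

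A second, smaller gap: the decrease estimate only gives $\sum_n a_n/n<+\infty$, and summability of the weighted series does \emph{not} force $a_n\to 0$ (only Ces\`aro decay), while the cluster-point extraction needs the full limit. The paper closes this with the Monderer--Shapley lemma, which requires in addition the slow-variation bound $|a_{n+1}-a_n|\leq C/n$; that bound is itself a nontrivial step, resting on $\|u^{n+1}-u^n\|_\infty+\|\nabla u^{n+1}-\nabla u^n\|_\infty+\|m^{n+1}-m^n\|_\infty+\|w^{n+1}-w^n\|_\infty\leq C/n$, proved via comparison for the Hamilton--Jacobi equation, an energy estimate on $z=u^{n+1}-u^n$ interpolated with the uniform $C^{1+\alpha/2,2+\alpha}$ bounds, and parabolic estimates for the equation satisfied by $m^{n+1}-m^n$. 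Your proposal gestures at ``leveraging the summability of the $1/(n+1)$ weights'' but does not identify this indispensable intermediate step; without it the argument stalls at Ces\`aro convergence and the passage to a cluster point solving \eqref{MFG2} fails.
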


The key remark to prove Theorem \ref{MainTheoremSecondOreder} is that the game itself has a {\it potential}. Given $m\in C^0([0,T]\times\states)$ and $w\in C^0([0,T]\times\states)$ such that, in the sense of distribution,  
$$
\partial_t m-\Delta m+{\rm div}(w)=0\; {\rm in }\; (0,T)\times \states   \qquad m(0)=m_0,
$$
let 
$$\Phi( m , w ) = \int_{0}^{T} \int_{\states} m(t,x) H^{*}(x, - w(t,x)/ m(t,x) )  \d x \d t + \int_{0}^{T} F(m(t)) \d t + G(m(T)),$$
where $H^*$ is the  convex conjugate of $H$: 
$$H^*(x,q)= \sup_{p\in \R^d} \; \lg p,q\rg - H(x,p).$$
In the definition of  $\Phi$, we set by convention when $m=0$, 
$$
H^{*}(x, - w/ m )= \left\{\begin{array}{ll}
0 & {\rm if}\; w=0\\
+\infty & {\rm otherwise.}
\end{array}\right.
$$
For sake of simplicity, we often drop the integration and the variable $(t,x)$ to write the potential in a shorter form:
$$\Phi( m , w ) = \int_{0}^{T} \int_{\states} m H^{*}(x, - w/m)  + \int_{0}^{T} F(m(t)) \d t + G(m(T)).$$
It is explained in \cite{LL07mf} section 2.6 that $(u,m)$ is a solution to \eqref{MFG2} if and only if $(m,w)$ is a minimizer of $\Phi$ and $w= -mD_pH(\cdot, \nabla u)$. 
We show here that the same map can be used as a potential in the Fictitious Play: $\Phi$ (almost) decreases at each step of the Fictitious Play and the derivative of $\Phi$ does not vary too much at each step. Then the proof of \cite{MondrerShapley2} applies. 

%%%%%%%%%%%%%%%%%%%%%%%%%%%%%%%%
\subsection{Proof of the convergence}

Before starting the proof of Theorem \ref{MainTheoremSecondOreder}, let us fix some notations. First we set 
\be\label{def.wn}
w^{n}(t,x) = - m^{n} (t,x) D_p H (x,\nabla u^{n}(t,x))\; {\rm and}\; \bar{w}^{n}(t,x) = \frac{1}{n} \sum_{k=1}^{n} w^{k}(t,x).
\ee
Since the Fokker-Planck equation is linear we have :
\be\label{e.FPbarm}
\partial_t \bar{m}^{n+1} - \Delta \bar{m}^{n+1}  + \text{div} (\bar w^{n+1}) =0 , \quad t\in [0,T], \qquad \bar{m}^{n+1}(0)=m_0.
\ee
Recall that $H^*$ is the  convex conjugate of $H$: 
$$H^*(x,q)= \sup_{p\in\R^d} \; \lg p,q\rg - H(x,p).$$
We define $\hat p (x,q)$ as the minimum in the above right-hand side:
\begin{equation}\label{def.hatp}
H^*(x,p)= \lg \hat p(x,q),q\rg - H(x,\hat p(x,q)). 
\end{equation}
Note that $\hat p$ is characterized by $q= D_p H(x,\hat p (x,q))$. The uniqueness comes from the fact that $H$ satisfies $D_{pp} H \geq \frac{1}{C} I_d$, which yields that $D_p H(x,\cdot)$ is one-to-one. We note for later use that
$$mH^{*} (x, - \frac{q}{m}) = \sup_{p\in \R^d} \;  - \lg p,q\rg -mH(x,p).$$
Next we state a standard result on uniformly convex functions, the proof of which is postponed: 
\begin{Lemma}\label{FenchelConjugate}
Under assumption \eqref{hyp:unifCv}, we have for any $x\in \states$, $p,q\in \R^d$:
$$H(x,p) + H^*(x,q) - \lg p,q\rg \geq \frac{1}{2\bar C} \leftnorm q-D_p H(x,p) \rightnorm^2 $$
\end{Lemma}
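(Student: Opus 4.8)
The plan is to exploit the uniform convexity of $H$ in the $p$-variable, which is exactly what assumption \eqref{hyp:unifCv} provides through the upper bound $D^2_{pp}H(x,p)\leq \bar C I_d$. The quantity on the left-hand side, $H(x,p)+H^*(x,q)-\lg p,q\rg$, is the classical \emph{Fenchel--Young gap}, and my first step is to rewrite it using the dual characterization of $H^*$. By definition of $H^*$ and of the maximizer $\hat p(x,q)$ (see \eqref{def.hatp}), we have $H^*(x,q)=\lg \hat p(x,q),q\rg-H(x,\hat p(x,q))$, and $\hat p(x,q)$ is characterized by $q=D_pH(x,\hat p(x,q))$. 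Substituting this into the Fenchel--Young gap turns the left-hand side into a second-order Taylor-type remainder for $H(x,\cdot)$ evaluated between $p$ and $\hat p(x,q)$.

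The key step is then to estimate this gap from below in terms of $|q-D_pH(x,p)|$. I would compare at the two points $p$ and $\hat p=\hat p(x,q)$. Writing the Fenchel--Young gap as
$$
H(x,p)-H(x,\hat p)-\lg D_pH(x,\hat p),p-\hat p\rg,
$$
(using $q=D_pH(x,\hat p)$ to replace $q$ everywhere) I recognize the first-order Taylor remainder of $H(x,\cdot)$ at $\hat p$, evaluated at $p$. By Taylor's formula with integral remainder and the upper bound $D^2_{pp}H\leq \bar C I_d$, this remainder is at most $\tfrac{\bar C}{2}|p-\hat p|^2$, and by the lower bound $D^2_{pp}H\geq \tfrac{1}{\bar C}I_d$ it is at least $\tfrac{1}{2\bar C}|p-\hat p|^2$. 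The lower bound alone does not yet finish the job, since I need $|q-D_pH(x,p)|$ rather than $|p-\hat p|$, so the last ingredient is to relate these two quantities.

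The main obstacle, then, is converting $|p-\hat p|^2$ into $|q-D_pH(x,p)|^2$ with the correct constant. Here I would use that $q=D_pH(x,\hat p)$, so that
$$
q-D_pH(x,p)=D_pH(x,\hat p)-D_pH(x,p)=\Big(\int_0^1 D^2_{pp}H(x,\hat p+s(p-\hat p))\,\d s\Big)(\hat p-p).
$$
Since the averaged Hessian is bounded above by $\bar C I_d$ in operator norm, this gives $|q-D_pH(x,p)|\leq \bar C|p-\hat p|$, hence $|p-\hat p|^2\geq \bar C^{-2}|q-D_pH(x,p)|^2$. Combining this with the lower Taylor bound yields a factor $\tfrac{1}{2\bar C^3}$, which is weaker than the claimed $\tfrac{1}{2\bar C}$. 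To recover the sharp constant I would instead carry out the estimate directly in the $q$-variable: expressing the gap through the convex conjugate $H^*(x,\cdot)$, whose Hessian is $D^2_{qq}H^*=(D^2_{pp}H)^{-1}$ and therefore satisfies $D^2_{qq}H^*\geq \bar C^{-1}I_d$, and Taylor-expanding $H^*(x,\cdot)$ at the point $D_pH(x,p)$ (where the gap vanishes), directly produces the lower bound $\tfrac{1}{2\bar C}|q-D_pH(x,p)|^2$. This dual viewpoint is the cleaner route and is, I expect, the one to pursue.
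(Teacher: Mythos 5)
Your final (dual) route is essentially the paper's proof: the paper also rewrites the Fenchel--Young gap in the $q$-variable, expanding $H^*(x,\cdot)$ around $\hat q = D_pH(x,p)$ where the gap and its gradient vanish, and obtains $\frac{1}{2\bar C}\leftnorm q-D_pH(x,p)\rightnorm^2$ from the bound $D^2_{pp}H\leq \bar C I_d$ (the paper phrases this as the strong monotonicity $\lg D_qH^*(q_1)-D_qH^*(q_2),q_1-q_2\rg\geq \frac{1}{\bar C}|q_1-q_2|^2$ and integrates, which is equivalent to your Hessian bound $D^2_{qq}H^*\geq \bar C^{-1}I_d$ since $H\in \mathcal{C}^2$ with uniformly positive Hessian makes $H^*$ twice differentiable). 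You also correctly diagnosed that the primal estimate only yields the weaker constant $\frac{1}{2\bar C^3}$, so discarding it in favor of the dual computation is exactly the right call.
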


The following Lemma explains that $\Phi$  is ``almost decreasing" along the sequence $(\bar m^n, \bar w^n)$. 
\begin{Lemma}\label{lem.DecreasePhi} There exists a constant $C>0$ such that, for any $n\in\N^*$,  
\begin{equation}
\Phi( \bar m^{n+1}, \bar w^{n+1} ) - \Phi(\bar m^n \bar w^n ) \leq  - \frac{1}{C} \frac{a_n}{n} + \frac{C}{n^2},
\end{equation}
where $\ds a_{n} =  \int_{0}^{T} \mint \bar m^{n+1} \labs  \bar w^{n+1} / \bar m^{n+1} - w^{n+1}/m^{n+1} \labs ^2 $. 
\end{Lemma}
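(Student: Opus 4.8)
The plan is to compare $\Phi$ with a \emph{linearized} potential in which the couplings are frozen at the current belief $\bar m^n$. Writing $L(m,w)=\int_0^T\mint mH^*(x,-w/m)$ for the kinetic term, set, for every $(m,w)$ solving the continuity equation $\partial_t m-\Delta m+\mathrm{div}(w)=0$ with $m(0)=m_0$,
$$
\mathcal{A}_n(m,w):=L(m,w)+\int_0^T\mint f(x,\bar m^n(t))\,\d m(t)\,\d t+\mint g(x,\bar m^n(T))\,\d m(T).
$$
First I would establish the optimality estimate
$$
\mathcal{A}_n(m,w)-\mathcal{A}_n(m^{n+1},w^{n+1})\;\geq\;\frac{1}{2\bar C}\int_0^T\mint m\,\labs w/m-w^{n+1}/m^{n+1}\labs^2 ,
$$
valid for all admissible $(m,w)$. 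To prove it I test the Hamilton--Jacobi equation \eqref{MFG2FP}-(i) against $m$: differentiating $t\mapsto\mint u^{n+1}(t)\,\d m(t)$ and using $\partial_t u^{n+1}=-\Delta u^{n+1}+H(x,\nabla u^{n+1})-f(x,\bar m^n)$ together with $\partial_t m=\Delta m-\mathrm{div}(w)$, the Laplacian terms cancel after integration by parts; integrating in time and inserting the initial/terminal data leaves an identity for $\mint u^{n+1}(0)\,\d m_0$. Estimating $-mH(x,\nabla u^{n+1})-\lg\nabla u^{n+1},w\rg$ from above by Lemma~\ref{FenchelConjugate} (with $p=\nabla u^{n+1}$, $q=-w/m$, then multiplying by $m\ge0$) reproduces $L(m,w)$ minus the quadratic defect, and at $(m^{n+1},w^{n+1})$ the relation $w^{n+1}=-m^{n+1}D_pH(x,\nabla u^{n+1})$ makes the Fenchel inequality an equality, so $\mathcal{A}_n(m^{n+1},w^{n+1})=\mint u^{n+1}(0)\,\d m_0$; subtracting yields the estimate.

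Next I would split $\Phi=\mathcal{A}_n+R_n$, where $R_n(m)=\int_0^T[F(m(t))-\mint f(x,\bar m^n(t))\,\d m(t)]\,\d t+[G(m(T))-\mint g(x,\bar m^n(T))\,\d m(T)]$ collects the nonlinear-minus-linearized couplings. Since $\bar m^{n+1}-\bar m^n=\frac1{n+1}(m^{n+1}-\bar m^n)$, one has $\dk(\bar m^{n+1}(t),\bar m^n(t))=O(1/n)$ uniformly in $t$; as $f,g$ are the Lipschitz derivatives of $F,G$ (so that $F,G$ are $C^{1,1}$ in the measure), a second-order Taylor expansion gives $R_n(\bar m^{n+1})-R_n(\bar m^n)=O(1/n^2)$, which will be the source of the $C/n^2$ term.

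For the $\mathcal{A}_n$ part I use that $\mathcal{A}_n$ is convex (its kinetic term is the perspective function of $H^*$ and the remaining terms are linear). With $s=1/(n+1)$ and $(\bar m^{n+1},\bar w^{n+1})=(1-s)(\bar m^n,\bar w^n)+s(m^{n+1},w^{n+1})$, convexity gives
$$
\mathcal{A}_n(\bar m^{n+1},\bar w^{n+1})-\mathcal{A}_n(\bar m^n,\bar w^n)\;\leq\; s\big[\mathcal{A}_n(m^{n+1},w^{n+1})-\mathcal{A}_n(\bar m^n,\bar w^n)\big],
$$
and the optimality estimate applied at $(m,w)=(\bar m^n,\bar w^n)$ bounds the bracket above by $-\frac{1}{2\bar C}\int_0^T\mint\bar m^n\,\labs\bar w^n/\bar m^n-w^{n+1}/m^{n+1}\labs^2$. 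As $s\ge1/(2n)$, this is a negative contribution of order $1/n$ times a $\bar m^n$-weighted velocity defect.

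The \emph{main obstacle}, and the final step, is to replace this $\bar m^n$-weighted defect by the $\bar m^{n+1}$-weighted quantity $a_n$. I would write the averaged velocity as the mass-weighted convex combination $\bar w^{n+1}/\bar m^{n+1}=\alpha\,\bar w^n/\bar m^n+(1-\alpha)\,w^{n+1}/m^{n+1}$ with $\alpha=(1-s)\bar m^n/\bar m^{n+1}$, so that $\bar w^{n+1}/\bar m^{n+1}-w^{n+1}/m^{n+1}=\alpha(\bar w^n/\bar m^n-w^{n+1}/m^{n+1})$ and hence the integrand of $a_n$ equals $\frac{(1-s)^2(\bar m^n)^2}{\bar m^{n+1}}\labs\bar w^n/\bar m^n-w^{n+1}/m^{n+1}\labs^2$. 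Using $\bar m^{n+1}\ge(1-s)\bar m^n$ (here the positivity of $m_0$, hence of all $m^k$ and of $\bar m^n$, is needed) the ratio to the $\bar m^n$-weighted integrand is $(1-s)^2\bar m^n/\bar m^{n+1}\le 1-s\le1$, so $a_n\le\int_0^T\mint\bar m^n\,\labs\bar w^n/\bar m^n-w^{n+1}/m^{n+1}\labs^2$. Combining the three steps gives $\Phi(\bar m^{n+1},\bar w^{n+1})-\Phi(\bar m^n,\bar w^n)\le-\frac1C\frac{a_n}{n}+\frac{C}{n^2}$. I expect the genuinely delicate points to be the rigorous integration by parts (needing the smoothness of $u^{n+1},m^{n+1}$ and of the averages) and the uniform-in-$t$ control of the Taylor remainder through \eqref{regucondf}.
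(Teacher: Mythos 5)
Your proof is correct, and although it is assembled from the same three ingredients as the paper's argument --- the duality between the Hamilton--Jacobi equation and the continuity equation, the quantified Fenchel gap of Lemma \ref{FenchelConjugate}, and the uniform $O(1/n^2)$ Taylor control of $F,G$ --- it organizes them around a genuinely different decomposition. The paper splits the increment of $\Phi$ into a kinetic part $A$ and a coupling part $B$: in $B$ it substitutes $f(x,\bar m^n)$ via the equation for $u^{n+1}$, integrates by parts against the equation satisfied by $m^{n+1}-\bar m^{n+1}$, and applies the Fenchel gap at $p=\nabla u^{n+1}$, $q=-\bar w^{n+1}/\bar m^{n+1}$, so the defect appears \emph{directly} with the weight $\bar m^{n+1}$, i.e.\ it is exactly $a_n$; in $A$ it uses the subgradient inequality of the perspective function at $\bar p^{n+1}=\hat p(\cdot,-\bar w^{n+1}/\bar m^{n+1})$, and the two kinetic differences cancel upon summation. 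You instead split $\Phi=\mathcal{A}_n+R_n$ and first prove the clean variational fact that $(m^{n+1},w^{n+1})$ minimizes the linearized functional $\mathcal{A}_n$ with value $\mint u^{n+1}(0)\,\d m_0$ and a uniform-convexity margin; your duality computation is the one the paper performs inside the estimate of $B$, but stated once, for all admissible test pairs, which makes the ``almost decreasing potential'' mechanism transparent and reusable (it is in fact the quantitative version of the variational characterization of \cite{LL07mf} recalled after Theorem \ref{MainTheoremSecondOreder}). The price is one extra step the paper never needs: your margin is weighted by $\bar m^n$ while $a_n$ is weighted by $\bar m^{n+1}$, and you bridge the two through the exact identity $\bar w^{n+1}/\bar m^{n+1}-w^{n+1}/m^{n+1}=\alpha\left(\bar w^n/\bar m^n-w^{n+1}/m^{n+1}\right)$ with $\alpha=(1-s)\bar m^n/\bar m^{n+1}$ and the bound $\bar m^{n+1}\geq(1-s)\bar m^n$, yielding $a_n\leq \int_0^T\mint \bar m^n \labs \bar w^n/\bar m^n - w^{n+1}/m^{n+1}\labs^2$. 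That conversion is correct as written; the positivity and classical regularity you flag as the delicate points (division by the densities, the integrations by parts in the duality identity, and the uniform Taylor remainder from \eqref{regucondf}) are all supplied by Lemma \ref{lem.regumw} and the displayed $C|s|^2$ estimates preceding the paper's proof, and your constants are uniform since $s=1/(n+1)\geq 1/(2n)$ for $n\in\N^*$.
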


Throughout the proofs, $C$ denotes a constant which depends on the data of the problem only (i.e., on $H$, $f$, $g$ and $m_0$) and might change from line to line. We systematically use the fact that, as $f$ and $g$ admit $F$ and $G$ as a potential and are globally Lipschitz continuous, there exists a constant $C >0$ such that, for any $m,m'\in \mes (\states)$ and $s\in [0,1]$, 
$$ \leftnorm F(m + s(m' - m)) - F(m) - s \mint f(x,m) \d (m'-m)(x) \rightnorm < C \abs s \abs ^2 ,$$
$$ \leftnorm G(m + s(m' - m)) - G(m) - s \mint g(x,m) \d (m'-m)(x) \rightnorm < C \abs s \abs ^2.$$

\begin{proof}[Proof of Lemma \ref{lem.DecreasePhi}]
We have
$$\Phi(\bar{m}^{n+1} , \bar{w}^{n+1}) = \Phi(\bar{m}^{n} , \bar{w}^{n}) +  A + B ,$$
where
\begin{equation}
\ds A = \int_0^T\int_{\states} \bar m^{n+1}H^*( - \bar w^{n+1}/\bar m^{n+1})- \bar m^{n}H^*( - \bar w^{n}/\bar m^{n})
\end{equation}
\begin{equation}
\ds B = \int_{0}^{T} \big( F(\bar m^{n+1} (t)) - F(\bar m^n (t)) \big) \d t + \big( G(\bar m^{n+1} (T)) - G(\bar m^n (T)) \big).
\end{equation}
Since $F$ is $C^1$ with respect to $m$ with derivative $f$, we have
$$
B \leq \int_{0}^{T} \mint f(x,\bar m^n (t) ) (\bar m^{n+1} - \bar m^{n} )+ \mint g(x,\bar m^n(T)) (\bar m^{n+1}- \bar m^{n}) + \frac{C}{n^2} .
$$
As $\ds \bar m^{n+1} - \bar m^{n}= \frac{1}{n+1}( m^{n+1} - \bar m^{n+1})$,   we find  after rearranging: 
$$
B \leq \frac{1}{n+1}\int_{0}^{T} \mint f(x,\bar m^n (t) )  ( m^{n+1} - \bar m^{n+1}) + \frac{1}{n+1} \mint g(x,\bar m^n(T)) ( m^{n+1}(T)- \bar m^{n+1}(T))  + \frac{C}{n^2} .
$$
Using now the equation satisfied by $u^{n+1}$ we get
\begin{equation*}
\begin{split}
B &\leq \frac{1}{n+1}\int_{0}^{T} \mint \big( -\partial_t u^{n+1} - \Delta u^{n+1}  +H(x,\nabla u^{n+1})  \big)  ( m^{n+1} - \bar m^{n+1}) \\
& \qquad \qquad +\frac{1}{n+1} \mint g(x,\bar m^n(T)) ( m^{n+1}(T)- \bar m^{n+1}(T))  + \frac{C}{n^2} \\
&\leq \frac{1}{n+1}\int_{0}^{T} \mint \big(\partial_t  ( m^{n+1} - \bar m^{n+1}) - \Delta  (m^{n+1} - \bar m^{n+1}) \big) u^{n+1}\\
& \qquad\qquad   + \frac{1}{n+1}\int_{0}^{T} \mint  H(x,\nabla u^{n+1})  ( m^{n+1} - \bar m^{n+1}) +\frac{C}{n^2},  
\end{split}
\end{equation*}
where we have integrated by parts in the second inequality. Using now the equation satisfied by $m^{n+1} - \bar m^{n+1}$ and integrating again by parts, we obtain
$$
B \leq \frac{1}{n+1}\int_{0}^{T} \mint  \lg w^{n+1} - \bar{w}^{n+1} , \nabla u^{n+1} \rg  + H(x,\nabla u^{n+1}) ( m^{n+1} - \bar m^{n+1}) +\frac{C}{n^2}.
$$
Note that  by Lemma \ref{FenchelConjugate},  
\begin{equation*}
\begin{split}
-\lg \bar{w}^{n+1} , \nabla u^{n+1} \rg  - H(x,\nabla u^{n+1})  \bar m^{n+1} 
\leq & \;  \bar m^{n+1} H^* ( - \bar w^{n+1} / \bar m^{n+1})\\
& \qquad  - \frac{1}{2\bar C } \bar m^{n+1} \labs  \bar w^{n+1} / \bar m^{n+1} - w^{n+1}/m^{n+1} \labs ^2 
\end{split}
\end{equation*}
while, by the definition of $w^{n+1}$, 
$$
\lg w^{n+1} , \nabla u^{n+1} \rg  + H(x,\nabla u^{n+1}) m^{n+1} = -  m^{n+1} H^* (- w^{n+1} / m^{n+1}).
$$
Therefore
\begin{equation}\label{e.iabzohs}
\begin{split}
B &\leq \frac{1}{n+1}\int_{0}^{T} \mint \bar m^{n+1} H^* ( - \bar w^{n+1} / \bar m^{n+1}) -  m^{n+1} H^* (- w^{n+1} / m^{n+1})\\
&\qquad \qquad - \frac{1}{2\bar C n}\int_{0}^{T} \mint \bar m^{n+1} \labs  \bar w^{n+1} / \bar m^{n+1} - w^{n+1}/m^{n+1} \labs ^2  +\frac{C}{n^2}.
\end{split}
\end{equation}
On the other hand, recalling the definition of $\hat p$ in \eqref{def.hatp} and setting $\bar p^{n+1} = \hat p (\cdot , - \bar w^{n+1} / \bar m^{n+1})$, we can estimate  $A$ as follows:
\begin{equation}\label{Aexp}
\begin{split}
A &\leq \int_{0}^{T} \mint  - \lg \bar p ^{n+1} , \bar w ^{n+1} \rg -\bar m^{n+1} H(x, \bar p ^{n+1} ) + \lg \bar p ^{n+1}, \bar w ^{n} \rg + \bar m^{n} H(x, \bar p ^{n+1} ) \\
&= \frac{1}{n+1} \int_{0}^{T} \mint  \lg \bar p ^{n+1} , \bar w ^{n+1} \rg + \bar m^{n+1} H(x, \bar p ^{n+1} ) - \lg \bar p ^{n+1}, w ^{n+1} \rg - m^{n+1} H(x, \bar p ^{n+1} ) \\
&\leq \frac{1}{n+1} \int_{0}^{T} \mint m^{n+1} H^*(- w^{n+1} / m^{n+1} ) - \bar m^{n+1} H^*( - \bar w^{n+1} / \bar m^{n+1} ).
\end{split}
\end{equation}
 Putting together \eqref{e.iabzohs} and \eqref{Aexp} we find:
%\begin{equation}
$$
\Phi( \bar m^{n+1}, \bar w^{n+1}) - \Phi(  \bar m^n, \bar w^n) \leq  - \frac{1}{2\bar C} \frac{a_n}{n} + \frac{C}{n^2}
$$
%\end{equation}
where $\ds a_{n} =  \int_{0}^{T} \mint \bar m^{n+1} \labs  \bar w^{n+1} / \bar m^{n+1} - w^{n+1}/m^{n+1} \labs ^2 $. 
\end{proof}

In order to proceed, let us recall some basic estimates on the system \eqref{MFG2FP}, the proof of which is postponed:
\begin{Lemma}\label{lem.regumw} For any $\alpha\in (0,1/2)$ there exist a constant $C > 0$ such that
for any $n\in \N^*$ 
$$
\|u^n\|_{C^{1+\alpha/2, 2+\alpha}} + \|m^n\|_{C^{1+\alpha/2, 2+\alpha}}\leq C, \qquad 
m^n\geq 1/C,
$$
where $C^{1+\alpha/2, 2+\alpha}$ is the usual H\"{o}lder space on $[0,T]\times\T^d$. 
%\abs \nabla u^n (t,x) ) \abs + |\nabla^2 u^n(t,x)|+
%|\partial_t \nabla u^n(t,x)| \leq C , \quad \frac{1}{A} \leq m^n (t,x) \leq C.$$
\end{Lemma}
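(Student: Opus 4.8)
The plan is to exploit the fact that, at each stage, the system \eqref{MFG2FP} is \emph{decoupled}: the Hamilton--Jacobi equation for $u^{n+1}$ involves only the already-computed belief $\bar m^n$, and not $m^{n+1}$ itself. Consequently $u^{n+1}$ solves a single backward equation whose data are controlled \emph{uniformly in $n$} by \eqref{boundC2}, while $m^{n+1}$ then solves a \emph{linear} Fokker--Planck equation whose drift is governed by $\nabla u^{n+1}$. All the required estimates therefore reduce to (i) a gradient bound for $u^{n+1}$ and (ii) standard linear parabolic theory, with every constant depending only on the data through \eqref{hyp:unifCv}, \eqref{hyp:Hcondsupp} and \eqref{boundC2}.

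The first and main step is a uniform Lipschitz bound $\norm \nabla u^{n+1}\norm_\infty \le C$. After an $L^\infty$ bound on $u^{n+1}$ by the comparison principle (the data $f(\cdot,\bar m^n(t))$ and $g(\cdot,\bar m^n(T))$ being bounded by \eqref{boundC2}), I would run the Bernstein method: set $w=\abs\nabla u^{n+1}\abs^2$, differentiate the Hamilton--Jacobi equation in $x$, and derive a parabolic inequality of the form
$$ -\partial_t w - \Delta w + \lg D_pH(x,\nabla u^{n+1}),\nabla w\rg \le C\, w + C, $$
where the troublesome first-order term $\lg D_xH(x,\nabla u^{n+1}),\nabla u^{n+1}\rg$ is absorbed precisely by \eqref{hyp:Hcondsupp} and the good second-order term $-2\abs D^2u^{n+1}\abs^2$ is discarded. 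Since $w(T)=\abs\nabla g(\cdot,\bar m^n(T))\abs^2\le C$ by \eqref{boundC2}, the comparison principle (a Gronwall argument in the backward time direction) yields the claimed gradient bound, uniformly in $n$. This is the step on which everything hinges, and \eqref{hyp:Hcondsupp} together with the quadratic growth \eqref{hyp:unifCv} is exactly what makes it close.

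With $\nabla u^{n+1}$ bounded, I would then bootstrap. First, the drift in \eqref{MFG2FP}-(ii) is bounded, so the linear Fokker--Planck equation gives, by energy and De Giorgi--Nash--Moser estimates, a uniform $L^\infty$ bound and H\"older continuity of $m^{n+1}$; in particular $t\mapsto m^{n+1}(t)$ is $\tfrac12$-H\"older in the distance $\dk$, and hence so is the average $\bar m^n$. This is the origin of the restriction $\alpha<1/2$: since $f,g$ are Lipschitz in $m$ for $\dk$ by \eqref{regucondf}, the right-hand side $f(x,\bar m^n(t))$ and the terminal datum are of class $C^{\alpha/2,\alpha}$ for any $\alpha<1/2$. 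Writing the Hamilton--Jacobi equation as a heat equation with right-hand side $f(x,\bar m^n)-H(x,\nabla u^{n+1})$ and applying $L^p$ estimates followed by the Schauder estimate then upgrades $u^{n+1}$ to $C^{1+\alpha/2,2+\alpha}$, uniformly in $n$.

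Finally, for $m^{n+1}$: once $u^{n+1}\in C^{1+\alpha/2,2+\alpha}$, the drift $b=D_pH(\cdot,\nabla u^{n+1})$ and its divergence $\mathrm{div}\,b$ (which involves $D^2u^{n+1}\in C^\alpha$) are of class $C^{\alpha/2,\alpha}$, uniformly in $n$. Rewriting \eqref{MFG2FP}-(ii) in the non-divergence form $\partial_t m^{n+1}-\Delta m^{n+1}-\lg b,\nabla m^{n+1}\rg-(\mathrm{div}\,b)\,m^{n+1}=0$ with the smooth initial datum $m_0$ from \eqref{hypm0}, the Schauder estimate gives $\norm m^{n+1}\norm_{C^{1+\alpha/2,2+\alpha}}\le C$. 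The lower bound $m^{n+1}\ge 1/C$ follows from a comparison argument: setting $K=\norm \mathrm{div}\,b\norm_\infty$ and $c_0=\min m_0>0$ (here we use that $m_0$ is positive), the function $\phi(t)=c_0 e^{-Kt}$ is a subsolution of the same equation with $\phi(0)\le m_0$, whence $m^{n+1}\ge c_0 e^{-KT}$ on $[0,T]\times\states$ by the minimum principle.
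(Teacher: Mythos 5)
Your proof is correct and follows essentially the same chain as the paper's (which is stated in a few lines): uniform Lipschitz bound for $u^{n+1}$, H\"older continuity in time of the densities in the distance $\dk$ so that $f(\cdot,\bar m^n(\cdot))$ is uniformly H\"older, Schauder for the Hamilton--Jacobi equation, Schauder again for the Fokker--Planck equation, and a maximum-principle argument for the lower bound. You merely fill in details the paper leaves implicit --- the Bernstein argument (where \eqref{hyp:Hcondsupp} is indeed the decisive hypothesis) for the gradient bound, and the explicit subsolution $c_0e^{-Kt}$, which is a quantitative version of the paper's appeal to the strong maximum principle.
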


As a consequence, the $u^n$, the $m^n$ and the $w^n$ do not vary too much between two consecutive steps: 
\begin{Lemma}\label{lem.un+1-un} There exists a constant $C>0$ such that 
$$
\|u^{n+1}-u^n\|_\infty+ \|\nabla u^{n+1}-\nabla u^n\|_\infty+ \|m^{n+1}-m^n\|_\infty+ \|w^{n+1}-w^n\|_\infty\leq \frac{C}{n}.
$$
\end{Lemma}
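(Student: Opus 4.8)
The plan is to linearize the two equations of \eqref{MFG2FP} around step $n$ and to reduce everything to standard linear parabolic estimates, using the uniform a priori bounds of Lemma \ref{lem.regumw} to keep all constants independent of $n$. The starting point is the elementary identity
$$
\bar m^n-\bar m^{n-1}=\frac1n\left(m^n-\bar m^{n-1}\right),
$$
which, together with the bounds $m^n\geq 1/C$ and $\|m^n\|_{C^{1+\alpha/2,2+\alpha}}\leq C$ of Lemma \ref{lem.regumw}, shows that $\dk(\bar m^n(t),\bar m^{n-1}(t))\leq C/n$ uniformly in $t$. The regularizing Lipschitz assumption \eqref{regucondf} then yields
$$
\|f(\cdot,\bar m^n(t))-f(\cdot,\bar m^{n-1}(t))\|_{C^2}\leq \frac{C}{n},\qquad
\|g(\cdot,\bar m^n(T))-g(\cdot,\bar m^{n-1}(T))\|_{C^3}\leq \frac{C}{n},
$$
so that the data driving $u^{n+1}$ and $u^n$ (namely $f(\cdot,\bar m^n)$, $g(\cdot,\bar m^n(T))$ versus $f(\cdot,\bar m^{n-1})$, $g(\cdot,\bar m^{n-1}(T))$) differ by $O(1/n)$.

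Next I would set $v=u^{n+1}-u^n$ and subtract the two copies of \eqref{MFG2FP}-(i). Writing $H(x,\nabla u^{n+1})-H(x,\nabla u^n)=\lg b^n,\nabla v\rg$ with $b^n(t,x)=\int_0^1 D_pH(x,(1-s)\nabla u^n+s\nabla u^{n+1})\,\d s$, the function $v$ solves the \emph{linear} backward parabolic equation
$$
-\partial_t v-\Delta v+\lg b^n,\nabla v\rg=f(\cdot,\bar m^n)-f(\cdot,\bar m^{n-1}),\qquad v(T)=g(\cdot,\bar m^n(T))-g(\cdot,\bar m^{n-1}(T)).
$$
By Lemma \ref{lem.regumw} the gradients $\nabla u^n,\nabla u^{n+1}$ are bounded in $C^{\alpha/2,1+\alpha}$ and $H\in C^2$ by \eqref{hyp:unifCv}, so $b^n$ is bounded in $C^{\alpha/2,\alpha}$ uniformly in $n$; hence this is a linear equation with uniformly Hölder coefficients and data of size $C/n$. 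Schauder estimates then give $\|v\|_{C^{1+\alpha/2,2+\alpha}}\leq C/n$, and in particular $\|u^{n+1}-u^n\|_\infty+\|\nabla u^{n+1}-\nabla u^n\|_\infty\leq C/n$.

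For the density I would set $\mu=m^{n+1}-m^n$ and subtract the two copies of \eqref{MFG2FP}-(ii). Since $m^{n+1}(0)=m^n(0)=m_0$, the function $\mu$ has zero initial datum and solves the linear Fokker-Planck equation
$$
\partial_t\mu-\Delta\mu-\text{div}\!\left(\mu\,D_pH(\cdot,\nabla u^{n+1})\right)=\text{div}(h^n),\qquad \mu(0)=0,
$$
with source $h^n=m^n\big(D_pH(\cdot,\nabla u^{n+1})-D_pH(\cdot,\nabla u^n)\big)$. Writing the difference of $D_pH$ through $D^2_{pp}H$ and the gradient difference $\nabla v$ already controlled above, and using the bounds of Lemma \ref{lem.regumw} on $m^n$, one gets $\|h^n\|_{C^{\alpha/2,1+\alpha}}\leq C/n$; the parabolic estimate for this linear equation with (Hölder) bounded drift and $O(1/n)$ divergence-form source then gives $\|m^{n+1}-m^n\|_\infty\leq C/n$. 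Finally, for $w^n=-m^nD_pH(\cdot,\nabla u^n)$ I would expand
$$
w^{n+1}-w^n=-(m^{n+1}-m^n)D_pH(\cdot,\nabla u^{n+1})-m^n\big(D_pH(\cdot,\nabla u^{n+1})-D_pH(\cdot,\nabla u^n)\big),
$$
and bound each term by $C/n$ using the previous steps together with the uniform bounds on $m^n$, $\nabla u^{n+1}$ and $D^2_{pp}H$.

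The main obstacle is really the parabolic estimates in the second and third steps: one must ensure that the linearized coefficient $b^n$ and the drift $D_pH(\cdot,\nabla u^{n+1})$ lie in a fixed Hölder class so that Schauder theory applies with constants independent of $n$. This is exactly where the uniform a priori bounds of Lemma \ref{lem.regumw} together with $H\in C^2$ (via \eqref{hyp:unifCv}) are used; once they are in hand, the remainder is a routine linearization and a careful bookkeeping of Hölder norms.
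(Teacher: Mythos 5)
Your skeleton coincides with the paper's except at one point: step 1 (the $O(1/n)$ bound on the data via $\bar m^n-\bar m^{n-1}=\frac1n(m^n-\bar m^{n-1})$ and the Lipschitz continuity of $f,g$), the Fokker--Planck step with the divergence-form source $R=m^n\big(D_pH(\cdot,\nabla u^{n+1})-D_pH(\cdot,\nabla u^n)\big)$, and the final expansion of $w^{n+1}-w^n$ are exactly the paper's. Where you diverge is the gradient bound for $z=u^{n+1}-u^n$: the paper first gets $\|z\|_\infty\le C/n$ by comparison, then multiplies the equation for $z$ by $z$ and integrates to obtain $\int_0^T\mint|\nabla z|^2\le C/n^2$, concluding with the uniform bounds of Lemma \ref{lem.regumw}; you instead linearize ($H$-difference written as $\lg b^n,\nabla z\rg$) and invoke Schauder estimates. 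The linearization itself is fine, but the Schauder step has a genuine gap.

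The estimate $\|v\|_{C^{1+\alpha/2,2+\alpha}}\le C/n$ requires the source $h:=f(\cdot,\bar m^n)-f(\cdot,\bar m^{n-1})$ to be of size $O(1/n)$ in the \emph{full parabolic} H\"older norm $C^{\alpha/2,\alpha}$, including the H\"older seminorm in time. Assumption \eqref{regucondf} controls only the spatial regularity: it gives $\sup_t\|h(t,\cdot)\|_{C^2}\le C/n$. In time, $f$ is merely Lipschitz in $m$ and $t\mapsto\bar m^n(t)$ is Lipschitz in $\dk$ with an $O(1)$ constant, so all you know is $|h(t,x)-h(s,x)|\le C\min(|t-s|,1/n)$; interpolating, the time-H\"older-$\alpha/2$ seminorm of $h$ is only of order $n^{-(1-\alpha/2)}$, not $1/n$. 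Hence your Schauder argument yields at best $\|\nabla z\|_\infty\le Cn^{-(1-\alpha/2)}$, and the same objection applies to your claim $\|h^n\|_{C^{\alpha/2,1+\alpha}}\le C/n$ in the Fokker--Planck step. This loss is not cosmetic: Lemma \ref{lem.un+1-un} feeds Corollary \ref{coro.bn+1-bn}, and Lemma \ref{Convtozero} needs precisely $|a_{n+1}-a_n|\le\bar C/n$; with a rate $n^{-\beta}$, $\beta<1$, the block sum $\sum_{k=N}^{[(1+\lambda)N]}k^{-\beta}\sim cN^{1-\beta}$ diverges and the argument that $a_n\to0$ collapses. To repair your route without leaving the linear framework, replace Schauder by a Duhamel/heat-semigroup estimate that uses only sup-type norms in time: from $v(t)=e^{(T-t)\Delta}v(T)+\int_t^T e^{(s-t)\Delta}\big(h-\lg b^n,\nabla v\rg\big)(s)\,\d s$, the bound $\|\nabla e^{\tau\Delta}\phi\|_\infty\le C\tau^{-1/2}\|\phi\|_\infty$, $\|v(T)\|_{C^1}\le C/n$ and a singular Gronwall inequality give $\|\nabla v\|_\infty\le C/n$ with constants depending only on $\|b^n\|_\infty$; alternatively, follow the paper's energy argument. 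Note finally that for the density step only $\|R\|_\infty\le C/n$ is needed (as in the paper), so once the gradient bound is fixed the rest of your proof goes through.
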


\begin{proof} As $\bar m^{n}-\bar m^{n-1}= ((n-1)\bar m^{n-1}+m^{n})/n$, where the $m^n$ (and thus the $\bar m^n$) are uniformly bounded thanks to Lemma \ref{lem.regumw}, we have by Lipschitz continuity of $f$ and $g$ that 
\begin{equation}\label{e.un+1-un3}
\sup_{t\in [0,T]} \left\| f(\cdot, \bar m^{n+1}(t))-f(\cdot, \bar m^{n}(t))\right\|_\infty + 
\left\| g(\cdot, \bar m^{n+1}(T))-g(\cdot, \bar m^{n}(T))\right\|_\infty \leq \frac{C}{n}.
\end{equation}
Thus, by comparison for the solution of the Hamilton-Jacobi equation, we get 
\begin{equation}\label{e.un+1-un2}
\|u^{n+1}-u^n\|_\infty \leq \frac{C}{n}.
\end{equation}
Let us set $z:= u^{n+1}-u^n$. Then $z$ satisfies 
%\begin{equation*}
%\begin{split}
%-\partial_t z-\Delta z - C|\nabla z|  &  \leq  \; 
%-\partial_t z-\Delta z +H(x,\nabla u^n+\nabla z)-H(x,\nabla u^n) \\
%&  \leq \; f(x, \bar m^n(t))-f(x, \bar m^{n-1}(t)).
%\end{split}
%\end{equation*}
$$
-\partial_t z-\Delta z +H(x,\nabla u^n+\nabla z)-H(x,\nabla u^n) \;  = \; f(x, \bar m^n(t))-f(x, \bar m^{n-1}(t)).
$$
Multiplying by $z$ and integrating over $[0,T]\times \T^d$ we find by \eqref{e.un+1-un3} and \eqref{e.un+1-un2}:
$$
-\left[\mint \frac{z^2}{2}\right]_0^T + \int_0^T \mint |\nabla z|^2  + z (H(x,\nabla u^n+\nabla z)-H(x,\nabla u^n)) \leq \frac{C}{n^2}.
$$
Then we use the uniform bound on the $\nabla u^n$ given by Lemma \ref{lem.regumw} as well as  \eqref{e.un+1-un2} to get 
$$
 \int_0^T \mint (|\nabla z|^2  - \frac{C}{n}  |\nabla z|) \leq \frac{C}{n^2}.
$$
Thus 
$$
 \int_0^T \mint |\nabla z|^2\leq \frac{C}{n^2},
 $$
 which implies that $\|\nabla z\|_\infty \leq C/n$ since $\|\nabla^2z\|_\infty+ \|\partial_t \nabla z\|_\infty \leq C$ by Lemma \ref{lem.regumw}.

 We argue in a similar way for $\mu:=m^{n+1}-m^n$: $\mu$ satisfies
 $$
 \partial_t \mu -\Delta \mu - \text{div} ( \mu D_pH(x,Du^{n+1}))- \text{div}  (R)=0, 
 $$
 where we have set $R= m^n\left(D_pH(x,\nabla u^{n+1})-D_pH(x,\nabla u^{n})\right)$. As $\|R\|_\infty\leq C/n$ by the previous step, we get the bound on  $ \|m^{n+1}-m^n\|_\infty\leq C/n$ by standard parabolic estimates. This implies the bound on $\|w^{n+1}-w^n\|_\infty$ by the definition of the $w^n$. 
\end{proof}

Combining Lemma \ref{lem.regumw} with Lemma \ref{lem.un+1-un} we immediately obtain that the sequence $(a_n)$ defined in Lemma \ref{lem.DecreasePhi} is slowly varying in time: 
\begin{Corollary}\label{coro.bn+1-bn} There exists a constant $C>0$ such that, for any $n\in \N^*$, 
$$
\left|a_{n+1}-a_n\right|\leq \frac{C}{n} .
$$
\end{Corollary}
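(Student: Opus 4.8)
The plan is to view $a_n$ as the integral over the fixed bounded set $[0,T]\times\states$ of one and the same nonlinear functional evaluated on the four functions $m^{n+1}$, $w^{n+1}$, $\bar m^{n+1}$, $\bar w^{n+1}$, and then to bound $|a_{n+1}-a_n|$ by showing that (i) each of these four ingredients varies by at most $C/n$ in the uniform norm when the superscript is raised by one, and (ii) the integrand depends in a Lipschitz way on them on the relevant region. Concretely, set
$$\Psi(m,w,M,W):= M\, \labs W/M - w/m\labs^2,$$
so that $\ds a_n = \int_0^T\mint \Psi(m^{n+1},w^{n+1},\bar m^{n+1},\bar w^{n+1})$, and likewise $a_{n+1}$ is the same integral with every superscript raised by one.

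First I would record the uniform a priori bounds. Lemma \ref{lem.regumw} gives $1/C\leq m^n\leq C$ and $\|\nabla u^n\|_\infty\leq C$; since $\bar m^n$ is a convex combination of the $m^k$, the same two-sided bound $1/C\leq \bar m^n\leq C$ holds. From $w^n=-m^nD_pH(\cdot,\nabla u^n)$ and $H\in{\mathcal C}^2$ one gets $\|w^n\|_\infty\leq C$, and hence also $\|\bar w^n\|_\infty\leq C$. Consequently all the arguments $(m^{n+1},w^{n+1},\bar m^{n+1},\bar w^{n+1})$ stay in a fixed compact set $K\subset (0,\infty)\times\R^d\times(0,\infty)\times\R^d$ whose first and third coordinates are bounded away from $0$. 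On such a $K$ the map $\Psi$ is of class ${\mathcal C}^1$ and therefore Lipschitz, with a constant $L$ depending only on the data.

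Next I would estimate the increments of the four ingredients. Lemma \ref{lem.un+1-un}, applied at level $n+1$, yields $\|m^{n+2}-m^{n+1}\|_\infty+\|w^{n+2}-w^{n+1}\|_\infty\leq C/n$. For the running averages, the elementary identities $\bar m^{n+2}-\bar m^{n+1}=\frac{1}{n+2}(m^{n+2}-\bar m^{n+1})$ and $\bar w^{n+2}-\bar w^{n+1}=\frac{1}{n+2}(w^{n+2}-\bar w^{n+1})$, combined with the uniform bounds above, give $\|\bar m^{n+2}-\bar m^{n+1}\|_\infty+\|\bar w^{n+2}-\bar w^{n+1}\|_\infty\leq C/n$. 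Feeding these four $O(1/n)$ estimates into the Lipschitz bound for $\Psi$ shows that the integrands of $a_{n+1}$ and of $a_n$ differ pointwise by at most $C/n$; integrating over the finite-measure domain $[0,T]\times\states$ then gives $|a_{n+1}-a_n|\leq C/n$, as claimed.

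The argument is essentially routine once Lemmas \ref{lem.regumw} and \ref{lem.un+1-un} are in hand; the only point that genuinely requires care — and the reason the statement is placed exactly here — is the smoothness of $\Psi$ on $K$, which relies crucially on the uniform positivity $m^n,\bar m^n\geq 1/C$. Without it the integrand is singular where the density vanishes and no Lipschitz estimate is available.
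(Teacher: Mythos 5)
Your proof is correct and takes exactly the route the paper intends: the paper states the corollary without a written proof, as an ``immediate'' combination of Lemma \ref{lem.regumw} (uniform two-sided bounds on $m^n$, $\bar m^n$ and uniform bounds on $\nabla u^n$, hence on $w^n$, $\bar w^n$) with Lemma \ref{lem.un+1-un} (the $O(1/n)$ increments), and your argument merely makes explicit the Lipschitz estimate for the integrand on the compact set where the densities stay bounded away from zero. Your identities for the increments of the averages and your appeal to the lower bound $m^n,\bar m^n\geq 1/C$ are precisely the points the authors leave to the reader, so there is nothing to correct.
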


\begin{proof}[Proof of Theorem \ref{MainTheoremSecondOreder}] From Lemma \ref{lem.DecreasePhi}, we have for any $n\in\N^*$,  
$$
\Phi(\bar m^{n+1}, \bar w^{n+1} ) - \Phi( \bar m^n,  \bar w^n) \leq  - \frac{1}{C} \frac{a_n}{n} + \frac{C}{n^2}
$$
where $\ds a_{n} =  \int_{0}^{T} \mint \bar m^{n+1} \labs  \bar w^{n+1} / \bar m^{n+1} - w^{n+1}/m^{n+1} \labs ^2 $. 

Since the potential $\Phi$ is bounded from below the above inequality implies that 
$$
\sum_{n\geq1} a_n / n < +\infty.
$$ 
%Then Lemma \ref{Convtozero} below yields:
%\begin{equation*}
%\lim_{N \to \infty} \frac{1}{N} \sum_{n=1}^{N} a_n = 0.
%\end{equation*}
From Corollary \ref{coro.bn+1-bn}, we also have, for any $n\in \N^*$, 
$$
\left|a_{n+1}-a_n\right|\leq \frac{C}{n} .
$$
Then Lemma \ref{Convtozero} below implies that $\lim_{n \to \infty} a_n = 0$. 

In particular we have, by Lemma \ref{lem.regumw}:
$$ \lim_{n \to \infty} \int_{0}^{T} \mint \labs \bar w^{n} / \bar m^{n} - w^{n}/m^{n} \labs ^2 \leq C \lim_{n \to \infty} \int_{0}^{T} \mint \bar m^{n} \labs \bar w^{n} / \bar m^{n} - w^{n}/m^{n} \labs ^2 =0.$$
This implies that the sequence $\{ \bar w^{n} / \bar m^{n} - w^{n}/m^{n} \}_{ n \in \N}$---which is uniformly continuous from Lemma \ref{lem.regumw}---uniformly converges to $0$ on $[0,T] \times \states$.

Recall that, by  Lemma \ref{lem.regumw}, the sequence $\{ (u ^{n+1} , m^{n}, \bar m ^n , \bar w^n) \}_{n \in \N}$ is pre-compact for the uniform convergence. Let $(u , m, \bar m , \bar w)$ be a cluster point of the sequence $\{ (u ^{n+1} , m^{n}, \bar m ^n , \bar w^n) \}_{n \in \N}$. Our aim is to show that $(u,m)$ is a solution to the MFG system \eqref{MFG2}, that $\bar m=m$ and that $\bar w= -mD_pH(\cdot, \nabla u)$.  

Let $n_i \in \N , i \in \N$ be a subsequence such that $(u^{n_i +1}, m^{n_i}, \bar m^{n_i}, w^{n_i})$ uniformly converges to $(u, m, \bar m, \bar w)$. By the estimates in Lemma \ref{lem.regumw}, we have $D_p H(x,\nabla u^{n_j})$  converges uniformly to 
$D_p H(x,\nabla u)$, so that by \eqref{def.wn} and the fact that the sequence $\{ \bar w^{n} / \bar m^{n} - w^{n}/m^{n} \}_{ n \in \N}$ converges to $0$, 
\begin{equation}\label{e.ljqdsspdjc}  
-D_p H(x,\nabla u) =  \frac{w}{m} = \frac{\bar w}{ \bar m}. 
\end{equation}
We now pass to the limit in \eqref{MFG2FP} (in the viscosity sense for the Hamilton-Jacobi equation and in the sense of distribution for the Fokker-Planck equation) to get
\begin{equation}\label{MFG3}
\begin{split}
(i) \qquad &-\partial_t u - \Delta u  +H(x,\nabla u (t,x)) =f(x,\bar{m}(t)), \quad (t,x)\in  [0,T]\times \states  \\
(ii) \qquad &\partial_t  m - \Delta  m  - \text{div} ( mD_pH(x,\nabla u)) =0 , \quad (t,x)\in  [0,T]\times \states  \\
& m(0)=m_0 , \; u(x,T)=g(x, \bar{m} (T)), \quad  x\in \states.
\end{split}
\end{equation}
Letting $n\to +\infty$ in \eqref{e.FPbarm} we also have 
$$
\partial_t  \bar m - \Delta \bar  m  + \text{div} ( \bar w) =0 , \quad t\in [0,T] , \qquad \bar m(0)=m_0.
$$
By \eqref{e.ljqdsspdjc}, this means that $m$ and $\bar m$ are both solutions to the same Fokker-Planck equation. Thus they are equal and $(u,m)$ is a solution to the MFG system. 

If \eqref{mono} holds, then the MFG system has a unique solution $(u,m)$, so that the compact sequence $\{(u^n,m^n)\}$ has a unique accumulation point  $(u,m)$ and thus converges to $(u,m)$. 
\end{proof}

In the proof of Theorem \ref{MainTheoremSecondOreder}, we have used the following Lemma, which can be found in \cite{MondrerShapley2}. 

\begin{Lemma}\label{Convtozero}
Consider a sequence of positive real numbers $\{ a_n \}_{n \in \N}$ such that $\sum_{n=1}^{\infty} a_n / n < +\infty$. Then we have
$$\lim_{N \to \infty} \frac{1}{N} \sum_{n=1}^{N} a_n = 0.$$
In addition, if there is a constant $\bar C >0 $ such that $\abs a_n - a_{n+1} \abs < \frac{\bar C}{n}$ then $\lim_{n \to \infty} a_n = 0$
\end{Lemma}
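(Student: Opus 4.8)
The plan is to treat the two assertions separately, recognizing the first as essentially Kronecker's lemma and the second as a slow-variation argument on top of it.

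For the first assertion, I would set $r_n := a_n/n$ and $S_n := \sum_{k=1}^n r_k$, so that by hypothesis $S_n$ converges to some finite limit $S$. The key step is a summation by parts: writing $a_n = n\,r_n = n(S_n - S_{n-1})$ with $S_0 = 0$ and rearranging, one obtains the identity
$$
\frac{1}{N}\sum_{n=1}^{N} a_n = S_N - \frac{1}{N}\sum_{n=1}^{N-1} S_n .
$$
Now $S_N \to S$ by assumption, and the Cesàro means $\frac{1}{N}\sum_{n=1}^{N-1} S_n$ of the convergent sequence $(S_n)$ also tend to $S$; subtracting gives that the left-hand side tends to $S - S = 0$, which is precisely the claim.

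For the second assertion, I would argue by contradiction. Since the $a_n$ are positive and their Cesàro averages tend to $0$ by the first part, suppose $a_n \not\to 0$; then there exist $\delta>0$ and a subsequence with $a_{n_k}\geq \delta$ and $n_k\to\infty$. The slow-variation hypothesis controls how fast $a$ can decrease away from such an index: for $m \geq n$,
$$
a_m \geq a_n - \sum_{j=n}^{m-1}\labs a_{j+1}-a_j \labs \geq \delta - \bar C \sum_{j=n}^{m-1}\frac{1}{j} \geq \delta - \bar C\,\ln\!\big(m/n\big).
$$
Choosing $\lambda := e^{\delta/(2\bar C)}>1$, this shows $a_m \geq \delta/2$ for every $m$ in the geometric window $n_k \leq m \leq \lambda n_k$. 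Setting $N_k := \lfloor \lambda n_k\rfloor$, the window contains on the order of $(\lambda-1)n_k$ indices, each contributing at least $\delta/2$, so
$$
\frac{1}{N_k}\sum_{n=1}^{N_k} a_n \;\geq\; \frac{1}{N_k}\sum_{m=n_k}^{N_k} a_m \;\geq\; \frac{N_k - n_k + 1}{N_k}\,\frac{\delta}{2} \;\longrightarrow\; \frac{\lambda-1}{\lambda}\,\frac{\delta}{2}\;>\;0 .
$$
This contradicts the conclusion of the first part, forcing $a_n \to 0$.

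The routine first part is straightforward once the summation-by-parts identity is written down; the genuine content is the second part. The main obstacle there is to exploit the bound $\labs a_{n+1}-a_n\labs < \bar C/n$ correctly: the decisive observation is that this forces persistence of large values not over a window of fixed additive length but over a window of fixed \emph{multiplicative} length $[\,n,\lambda n\,]$, because the cumulative variation is governed by the logarithm $\bar C\ln(m/n)$ rather than by the number of steps. Calibrating $\lambda$ to $\delta$ so that the accumulated drop stays below $\delta/2$, and then checking that such a multiplicative window carries a fixed positive fraction of the Cesàro mass, is the heart of the matter; the integer-part bookkeeping around $N_k = \lfloor \lambda n_k\rfloor$ is purely routine.
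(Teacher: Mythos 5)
Your proof is correct up to one easily repaired slip, and its core mechanism coincides with the paper's. For the first assertion the paper takes a mildly different route: it sets $b_k=\sum_{n\geq k} a_n/n$, notes $b_k\to 0$, and bounds $\sum_{n=1}^N a_n \leq \sum_{k=1}^N b_k$ by exchanging the order of summation, so the Ces\`aro mean of the $a_n$ is dominated by the vanishing Ces\`aro mean of the tails. Your Abel-summation identity $\frac{1}{N}\sum_{n=1}^N a_n = S_N - \frac{1}{N}\sum_{n=1}^{N-1} S_n$ is the classical Kronecker-lemma route: an exact identity where the paper uses a one-sided inequality; the two are equivalent in substance and effort, and your version has the minor virtue of identifying the limit as $S-S$ explicitly. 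For the second assertion your argument is essentially the paper's (which reproduces Monderer--Shapley): large values persist over a \emph{multiplicative} window because the cumulative variation over $[n,\lambda n]$ is governed by a harmonic block sum of size about $\bar C\log\lambda$, and such a window carries a fixed positive fraction of the Ces\`aro mass, contradicting the first part. The only difference is presentational: the paper argues directly that $a_N>\epsilon$ is impossible for $N$ large once $\log(1+\lambda_\epsilon)<\epsilon/(2\bar C)$, while you argue by contradiction along a subsequence $a_{n_k}\geq\delta$ with the window calibrated as $\lambda = e^{\delta/(2\bar C)}$; these are the same calibration.

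The slip: the middle inequality in your display, $\sum_{j=n}^{m-1} \frac{1}{j} \leq \ln(m/n)$, is false as stated --- integral comparison gives $\sum_{j=n}^{m-1} \frac{1}{j} \geq \ln(m/n)$, and the correct upper bound is $\ln\frac{m-1}{n-1} \leq \ln\frac{m}{n} + \frac{1}{n-1}$ for $m\geq n\geq 2$. This is immaterial to your argument: since $n_k\to\infty$, the extra term $\bar C/(n_k-1)$ is $o(1)$, so on the window you still get $a_m \geq \delta/2 - o(1) \geq \delta/4$ for $k$ large (alternatively, shrink the window to $\lambda = e^{\delta/(3\bar C)}$), and the final contradiction with the Ces\`aro convergence goes through verbatim with $\delta/4$ in place of $\delta/2$.
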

\begin{proof} We reproduce the proof of \cite{MondrerShapley2} for the sake of completeness. 
For every $k \in \N$ define $b_k = \sum_{n=k}^{\infty} a_n / n $. Since $\sum_{n=1}^{\infty} a_n / n < +\infty$ we have $\lim_{k \to \infty} b_k = 0$.  So we have:
$$\lim_{N \to \infty} \frac{1}{N} \sum_{k=1}^{N} b_k = 0,$$
which yields the first result since:
$$\sum_{n=1}^{N} a_n \leq \sum_{k=1}^{N} b_k.$$
For the second result, consider $\epsilon >0$. We know that for every $\lambda > 0$ we have:
$$\lim_{N \to \infty} \frac{1}{N} + \frac{1}{N+1} + \cdots + \frac{1}{[(1+\lambda)N]} = \log (1 + \lambda ), $$
where $[a]$ denotes the integer part of the real number $a$. 
So if $\lambda_\epsilon >0$ is so small that $\log(1+ \lambda_\epsilon ) < \frac{\epsilon}{2\bar C}$, then there exist $N_\epsilon \in \N$ so large that for $N \geq N_\epsilon$ we have
\be\label{ljhzqsldj}
\frac{1}{N} + \frac{1}{N+1} + \cdots + \frac{1}{[(1+\lambda_\epsilon)N]} < \frac{\epsilon}{2\bar C} .
\ee
Let $N \geq N_\epsilon$. Assume for a while that $a_N>\ep$. As $|a_{k+1}-a_k|\leq \bar C/k$, \eqref{ljhzqsldj} implies that $a_k >\frac{\epsilon}{2}$ for $N \leq k \leq [N(1+ \lambda_\epsilon)]$. Thus
$$\frac{1}{[N(1+\lambda_\epsilon)]} \sum_{k=1}^{[N(1+\lambda_\epsilon)]} a_k \geq \frac{\lambda_\epsilon}{1+\lambda_\epsilon} \frac{\epsilon}{2}.$$
Since the average $N^{-1}\sum_{k=1}^N a_k$ converges to zero, the above inequality cannot hold for $N$ large enough. This implies that $a_N\leq \ep$ for $N$ sufficiently large, so that $(a_k)$ converges to $0$.
\end{proof}

\begin{proof}[Proof of Lemma \ref{FenchelConjugate}.] For simplicity of notation,we omit the $x$ dependence in the various quantities. As by assumption \eqref{hyp:unifCv} we have $\frac{1}{C} I_d \leq D ^2 _{pp}H \leq C I_d$, $H^*$ is differentiable with respect to $q$ and the following inequality holds: for any $q_1,q_2\in \R^d$, 
$$
\lg D_qH^*(q_1)-D_qH^*(q_2), q_1-q_2\rg \geq \frac{1}{\bar C} |q_1-q_2|^2. 
$$
Let us fix $p,q\in \R^d$ and let $\hat q\in \R^d$ be the maximum in
$$
\max_{q'\in \R^d} \lg q',p\rg -H^*(q') = H(p).
$$
Recall that $p=D_qH^*(\hat q)$ and thus $\hat q= D_pH(p)$. 
Then 
$$
\begin{array}{rl}
\ds H(p) + H^*(q) - \lg p,q\rg 
&= \ds H^*(q) - H^*(\hat q) - \lg q-\hat q ,p \rg\\
& = \ds \int_0^1 \lg D_qH^*((1-t)\hat q+tq) - D_qH^*(\hat q),q-\hat q\rg dt \\
& = \ds  \int_0^1 \frac{1}{t} \lg D_qH^*((1-t)\hat q+tq) - D_qH^*(\hat q),((1-t)\hat q+tq)- \hat q\rg dt \\
%& \\
&\ds  \geq \int_0^1 t\ \frac{1}{\bar C} |\hat q-q|^2= \frac{1}{2\bar C}  |D_pH(p)-q|^2.
\end{array}
$$
\end{proof}

\begin{proof}[Proof of Lemma \ref{lem.regumw}.] Given $\bar m^n\in C^0([0,T], \mes(\T^d))$, the solution $u^{n+1}$ is uniformly Lipschitz continuous. Hence any weak solution to the Fokker-Planck  equation is uniformly H\"{o}lder continuous in $C^0([0,T], \mes(\T^d))$. This shows that the right-hand side of the Hamilton-Jacobi equation is uniformly H\"{o}lder continuous; then the Schauder estimate provide the bound in $C^{1+\alpha/2, 2+\alpha}$ for $\alpha\in (0,1/2)$. Plugging this estimate into the Fokker-Planck equation and using again the Schauder estimates gives the bounds in $C^{1+\alpha/2, 2+\alpha}$ on the the $m^n$. The bound from below for the $m^n$ comes from the strong maximum principle.
\end{proof}

%%%%%%%%%%%%%%%%%%%%%%%%%%%%%%%%%%%%%%%
%%%%%%%%%%%%%%%%%%%%%%%%%%%%%%%%%%%%%%
\section{The Fictitious Play for first order MFG systems}\label{sec.MFG1}

We now consider the first order order MFG system: 
\begin{equation}\label{MFG}
\left\{\begin{split}
(i) \qquad&-\partial_t u  +H(x,\nabla u(t,x)) =f(x,m(t)), \quad (t,x)\in  [0,T]\times \states \\
(ii) \qquad&\partial_t m  + \text{div} (-mD_p H(x,\nabla u(t,x)) ) =0 , \quad (t,x)\in  [0,T]\times \states \\
&m(0)=m_0 , \; u(x,T)=g(x, m(T)), \quad  x\in \states
\end{split}\right.
\end{equation}
In contrast with second order MFG systems, we cannot expect the existence of classical solutions: namely both the Hamilton-Jacobi equation and the Fokker-Planck equation have to be understood in a generalized sense. In particular, the solutions of the Fictitious Play are not smooth enough to justify the various computations of section \ref{sec.MFG2}. For this reason we introduce another method---based on another potential---, which also has the interest that it can be adapted to a finite population of players.  

Let us start by recalling the notion of solution for \eqref{MFG}. Following \cite{LL07mf}, we say that the pair $(u,m)$ is a solution to the MFG system \eqref{MFG} if $u$ is a Lipschitz continuous  viscosity solution to \eqref{MFG}-(i) while $m\in L^\infty((0,T)\times \T^d)$ is a solution of \eqref{MFG}-(ii) in the sense of distribution. 

Under our standing assumptions \eqref{hypm0}, \eqref{hyp:unifCv}, \eqref{hyp:Hcondsupp}, \eqref{regucondf},  there  exists at least one solution $(u,m)$ to the mean field game system \eqref{MFG}. If furthermore \eqref{mono} holds, then the solution is unique  (see \cite{LL07mf} and Theorem 5.1  in \cite{Cdga13}).

%%%%%%%%%%%%%%%%%%%%%%%%%%%%%%%%%%
 \subsection{The learning rule and the potential}
%\subsubsection{The learning rule}

The learning rule is basically the same as for second order MFG systems: given a smooth initial guess $m^0:[0,T]\times \T^d\to \R$, we define by induction sequences $u^n,m^n:[0,T]\times \T^d\to \R$ {\it heuristically} given by:
\begin{equation}\label{MFG1FP}
\begin{split}
(i) \qquad &-\partial_t u^{n+1}  +H(x,\nabla u^{n+1}(t,x)) =f(x,\bar{m}^{n}(t)), \quad (t,x)\in  [0,T]\times \states  \\
(ii) \qquad &\partial_t m^{n+1} + \text{div} ( - m^{n+1}D_p H (x,\nabla u^{n+1})) =0 , \quad (t,x)\in  [0,T]\times \states  \\
&m^{n+1}(0)=m_0 , \; u^{n+1}(x,T)=g(x, \bar{m} ^n (T)), \quad  x\in \states
\end{split}
\end{equation}
where $\bar{m}^{n} (t,x) = \frac{1}{n} \sum_{k=1}^{n} {m}^{k}(t,x)$. If equation \eqref{MFG1FP}-(i) is easy to interpret, the meaning of  \eqref{MFG1FP}-(ii) would be  more challenging and, actually, would make little sense for a finite population. For this reason we are going to rewrite the problem in a completely different way, as a problem on the space of curves. 

Let us fix the notation. Let $\Gamma = C^0([0,T], \states)$ be the set of curves. It is endowed with usual topology of the uniform convergence and we denote by $\mathcal{B}(\Gamma)$ the associated $\sigma-$field. We define $\mes(\Gamma)$ as the set of Borel probability measures on $\mathcal{B}(\Gamma)$. We view $\Gamma$ and $\mes(\Gamma)$ as the set of pure and mixed strategies for the players. For any $t \in [0,T]$ the evaluation map $e_t : \Gamma \to \states$, defined by:
$$e_t (\gamma) = \gamma (t),\quad \forall \gamma \in \Gamma$$
is continuous and thus measurable. For any $\eta \in  \mes(\Gamma)$ we define $m^\eta(t) = e_t \sharp \eta$ as the push forward of the measure $\eta$ to $\states$ i.e.
$$m^\eta(t)(A) = \eta (\{ \gamma \in \Gamma \mid \gamma(t)\in A \})$$
for any measurable set $A\subset \states$. We denote by $\mes_0(\Gamma)$ the set of probability measures on $\Gamma$ such that $e_0\sharp \eta=m_0$. Note that $\mes_0(\Gamma)$ is the set of strategies compatible with the initial density $m_0$. 

Given an initial time $t\in [0,T]$ and an initial position $x$, it is convenient to define the cost of a path $\gamma\in C^0([t,T],\states)$ payed by a small player starting from that position when the repartition of strategies of the other players is $\eta$. It is given by 
$$
J(t,x,\gamma, \eta):= \left\{ \begin{array}{ll}
\ds \int_t^T L(\gamma(s),\dot \gamma(s))+ f(\gamma(s),m^\eta(s)) \d s + g(\gamma(T),m^\eta(T)) & {\rm if }\; \gamma\in H^1([t,T],\states)\\
+\infty & {\rm otherwise}.
\end{array}\right.
$$
where $L(x,v):= H^*(x,-v)$ and $H^*$ is the Fenchel conjugate of $H$ with respect to the last variable. If $t=0$, we simply abbreviate $J(x,\gamma, \eta):= J(0,x,\gamma, \eta)$. We note for later use that $J(t,x,\cdot,\eta)$ is lower semi-continuous on $\Gamma$. 

We now define the Fictitious Play.  We start with an initial configuration $\eta^0\in \mes_0(\Gamma)$ (the belief before the first step of a typical player on the actions of the other players). We now build by induction the sequences $(\theta^n)$ and $(\eta^n)$ of $\mes(\Gamma)$, $\eta^n$ being interpreted as the belief at the end of stage $n$ of a typical player on the actions of the other agents and $\theta^{n+1}$ the repartition of strategies of the players when they play optimally in the game against $\eta^n$. More precisely, for any $x\in \states$, let $\bar \gamma_x^{n+1}\in H^1([0,T], \states)$ be an optimal solution to 
$$
\inf_{\gamma\in H^1, \ \gamma(0)=x } J(x,\gamma, \eta^n). 
$$
In view of our coercivity assumptions on $H$ and the definition of  $L$, the optimum is known to exist. Moreover, by the measurable selection theorem we can (and will) assume that the map $x\to \bar \gamma_x^{n+1}$ is Borel measurable. We then consider the measure $\theta^{n+1} \in P_0(\Gamma)$ defined by 
$$
\theta^{n+1}:= \bar \gamma^{n+1}_{\cdot} \sharp m_0\qquad \forall t\in[0,T]
$$
and set 
\begin{equation}\label{defetak}
\eta^{n+1}:= \frac{1}{n+1} \sum_{k=1}^{n+1} \theta^k = \eta^n + \frac{1}{n+1}(\theta^{n+1}-\eta^n).
\end{equation}
%We also set
%$$
%m^{n+1}(t)= e_t\sharp \theta^{n+1}.
%$$
%With the notation of section \eqref{sec.MFG2}, we have 
%$$
%\bar m^{n+1}(t)= e_t\sharp \eta^{n+1}, \qquad u^{n+1}(t,x)= J(t,x,\bar \gamma_x^{n+1},  \eta^n).
%$$

As in section \ref{sec.MFG2}, we assume that our MFG is potential, i.e., that there exists of potential functions $F,G : \mes (\states) \to \R$ such that:
\be\label{potpot2}
f(x,m)= \frac{\delta F}{\delta m}(x,m), \quad g(x,m)= \frac{\delta G}{\delta m}(x,m).
\ee

Here is our main convergence result.
\begin{Theorem}\label{th.mainOrdre1} Assume that \eqref{hypm0}, \eqref{hyp:unifCv}, \eqref{hyp:Hcondsupp}, \eqref{regucondf} and \eqref{potpot2} hold. Then the sequences $(\eta^n,\theta^n)$ is pre-compact in $\mes(\Gamma)\times \mes(\Gamma)$ and any cluster point $(\bar \eta, \bar \theta)$ satisfies the following: $\bar \theta =\bar \eta$ and, if we set 
\be\label{e.defbarmbaru}
\bar m(t):= e_t \sharp \bar \eta
, \quad \bar u(t,x)= \inf_{\gamma\in H^1, \ \gamma(t)=x} J(t,x,\gamma, \bar \eta),
\ee
then the pair $(\bar u,\bar m)$ is a solution to the MFG system \eqref{MFG}. 

If furthermore \eqref{mono} holds, then the entire sequence $(\eta^n,\theta^n)$ converges. 
\end{Theorem}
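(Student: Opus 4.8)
My strategy is to mirror the proof of the second-order case (Theorem~\ref{MainTheoremSecondOreder}), replacing the variational potential $\Phi(m,w)$ on flows by a potential defined on the space of measures over curves. The natural candidate is
$$
\Psi(\eta):= \int_\Gamma \int_0^T L(\gamma(s),\dot\gamma(s))\,\d s\ \d\eta(\gamma) + \int_0^T F(m^\eta(t))\,\d t + G(m^\eta(T)),
$$
with the convention that the inner time integral is $+\infty$ off $H^1$. First I would establish that $\Psi$ is bounded below and that minimizing $\Psi$ over $\mes_0(\Gamma)$ is equivalent to solving the MFG system~\eqref{MFG}: this is the curve-space analogue of the variational characterization from \cite{LL07mf} recalled in Section~\ref{sec.MFG2}, and it identifies the value $\bar u$ in~\eqref{e.defbarmbaru} as the value function of the optimal control problem against the limiting belief $\bar\eta$.

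\textbf{The key monotonicity estimate.} The heart of the argument is to show that $\Psi$ is \emph{almost decreasing} along $(\eta^n)$, in exact parallel with Lemma~\ref{lem.DecreasePhi}. Using the update rule~\eqref{defetak}, $\eta^{n+1}=\eta^n+\frac{1}{n+1}(\theta^{n+1}-\eta^n)$, I would Taylor-expand the two potential terms. The running/terminal cost terms contribute, via the potential property~\eqref{potpot2} and the quadratic remainder bound stated after Lemma~\ref{lem.DecreasePhi},
$$
\frac{1}{n+1}\Big(\int_0^T \mint f(x,\bar m^n(t))\,\d(m^{\theta^{n+1}}-m^{\eta^n})(t)(x)+\mint g(x,\bar m^n(T))\,\d(\cdots)\Big)+\frac{C}{n^2},
$$
where I write $\bar m^n:=m^{\eta^n}$. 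The kinetic term is linear in $\eta$, contributing $\frac{1}{n+1}\int_\Gamma\int_0^T L\,(\d\theta^{n+1}-\d\eta^n)$. The decisive point is that $\theta^{n+1}$ is built from curves $\bar\gamma_x^{n+1}$ that are \emph{optimal against $\eta^n$}, i.e.\ they minimize $J(x,\cdot,\eta^n)=\int_0^T L + f(\cdot,\bar m^n) + g(\cdot,\bar m^n(T))$. Hence $\theta^{n+1}$ minimizes the linearized functional $\eta\mapsto \int_\Gamma J(\gamma(0),\gamma,\eta^n)\,\d\eta$ among all $\eta\in\mes_0(\Gamma)$. Comparing $\theta^{n+1}$ against $\eta^n$ as competitor in this linear problem shows the first-order variation is $\le 0$, yielding
$$
\Psi(\eta^{n+1})-\Psi(\eta^n)\le -\frac{1}{n+1}\,b_n + \frac{C}{n^2},
$$
with $b_n\ge 0$ measuring how far $\eta^n$ is from optimal against itself (the gap between $\int J(\cdot,\eta^n)\,\d\eta^n$ and its infimum $\int J(\cdot,\eta^n)\,\d\theta^{n+1}$). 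Because $\Psi$ is bounded below, $\sum_n b_n/n<\infty$, so Lemma~\ref{Convtozero} forces $b_n\to 0$ provided I also prove the slow-variation bound $|b_{n+1}-b_n|\le C/n$.

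\textbf{Passing to the limit and the main obstacle.} Once $b_n\to0$, precompactness of $(\eta^n,\theta^n)$ in $\mes(\Gamma)$ follows from tightness (the curves lie in a fixed $H^1$-ball by the coercivity and the uniform cost bound~\eqref{boundC2}), and along a convergent subsequence $\eta^{n_i}\to\bar\eta$, $\theta^{n_i}\to\bar\theta$. The vanishing optimality gap $b_n\to0$ should force $\bar\theta=\bar\eta$ and that $\bar\eta$-a.e.\ curve is optimal for $J(\cdot,\bar\eta)$; by lower semicontinuity of $J(t,x,\cdot,\eta)$ and continuity of $m\mapsto(f,g)$ from~\eqref{regucondf}, the limiting measure is then supported on optimal trajectories for the limit belief, which is exactly the characterization making $(\bar u,\bar m)$ a solution of~\eqref{MFG}. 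The uniqueness addendum under~\eqref{mono} is then standard: the unique solution is the only cluster point, so the whole sequence converges.

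I expect the main difficulty to be the slow-variation estimate $|b_{n+1}-b_n|\le C/n$, which in the second-order case rested on the uniform parabolic regularity of Lemmas~\ref{lem.regumw}--\ref{lem.un+1-un}. Here those smoothing tools are \emph{unavailable} ($\sigma=0$), so I would instead control the variation through the stability of the optimal-control problem: a $C/n$ perturbation of the belief (since $\bar m^{n+1}-\bar m^n=O(1/n)$ in $\dk$, using~\eqref{defetak} and the uniform bounds) perturbs the costs $f,g$ by $C/n$ in $C^2$ by~\eqref{regucondf}, and I must show this propagates to a $C/n$ change in $b_n$ using only semiconcavity estimates on $\bar u$ and uniform Lipschitz bounds on the optimal curves, rather than Schauder theory. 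Making this quantitative stability rigorous on the curve space---and handling the measurable selection of $\bar\gamma_x^{n+1}$ cleanly---is the technical crux of the first-order argument.
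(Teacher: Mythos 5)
Your plan is essentially the paper's own proof: your $\Psi$ is exactly the potential $\Phi$ of \eqref{defPhi2}, your almost-decrease inequality is Proposition \ref{prop:Phi-Phi}, your $b_n$ is the paper's $a^{n+1}$, and the combination with Lemma \ref{Convtozero} plus a slow-variation bound is precisely Lemma \ref{lem:key}. However, you misplace the difficulty. The step you flag as the technical crux, $|b_{n+1}-b_n|\leq C/n$, does \emph{not} require semiconcavity or any substitute for parabolic regularity: since $\eta^{n+1}-\eta^n=\frac{1}{n+1}(\theta^{n+1}-\eta^n)$, one has $\dk(e_t\sharp\eta^{n+1},e_t\sharp\eta^n)\leq C/(n+1)$, and Lipschitz continuity of $f,g$ in $m$ gives $\bigl|\frac{\delta \Phi}{\delta \eta}(\eta^n)(\theta)-\frac{\delta \Phi}{\delta \eta}(\eta^{n+1})(\theta)\bigr|\leq C/n$ uniformly over $\theta\in\mes_0(\Gamma)$; a standard two-sided swap argument, comparing the minimizer $\theta^{n+1}$ for belief $\eta^n$ with the minimizer $\theta^{n+2}$ for belief $\eta^{n+1}$, then bounds the variation of the minimal values, and hence of $b_n$, by $C/n$. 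No stability theory for the value function beyond this Lipschitz dependence of $J$ on the belief is needed.

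The genuine gap is in your endgame, which you dispatch in one sentence: ``the limiting measure is supported on optimal trajectories for the limit belief, which is exactly the characterization making $(\bar u,\bar m)$ a solution.'' It is not. The vanishing gap only yields the variational inequality \eqref{eq.dPhideta}, i.e.\ that $\bar\eta$-a.e.\ curve minimizes $J(\gamma(0),\cdot,\bar\eta)$; passing from this to the statement that $\bar m(t)=e_t\sharp\bar\eta$ solves \eqref{MFG}-(ii) in the sense of distributions \emph{with $\bar m\in L^\infty$} (which the very definition of solution demands, since $f(x,m(t))$ must make sense and the drift $-D_pH(x,\nabla\bar u)$ is merely a Borel selection of a discontinuous field) is the hard part of the first-order theory. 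The paper needs: (i) semiconcavity of $\bar u$ and the Cannarsa--Sinestrari results to get uniqueness of the optimal curve $\bar\gamma_x$ at every differentiability point of $\bar u(0,\cdot)$, hence, using that $m_0$ is absolutely continuous, the identification $\bar\eta=\mint \delta_{\bar\gamma_x}\,m_0(x)\,\d x$ as in \eqref{identifbareta}; (ii) the appendix Proposition \ref{prop:mbounded}, proved via a vanishing-viscosity construction for existence and Ambrosio's superposition principle for uniqueness, to show the continuity equation with this nonsmooth drift has a unique solution which moreover lies in $L^\infty$. The same a.e.-uniqueness of minimizers, combined with $\Gamma$-convergence of $J(x,\cdot,\eta^{n_i})$ to $J(x,\cdot,\bar\eta)$, is also what proves $\bar\theta=\bar\eta$, which you assert without argument. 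Finally, your preliminary step---that global minimizers of $\Psi$ over $\mes_0(\Gamma)$ are equivalent to MFG solutions---is neither proved nor needed in the paper: only the first-order condition \eqref{eq.dPhideta} enters, and establishing the global equivalence would be extra (and for you, unproved) work.
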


The proof of Theorem \ref{th.mainOrdre1} is postponed to the next subsection. 
As for the second order problem, the key idea is that our MFG system has a potential. However, in contrast with the second order case, the potential is now written on the space of probability on curves and reads, for $\eta \in \mes(\Gamma)$, 
\begin{equation}\label{defPhi2}
\Phi(\eta):= \int_\Gamma \int_0^T L(\gamma(t),\dot \gamma(t))\ dt \d \eta(\gamma) + \int_0^T F( e_t\sharp \eta)\ dt + G(e_T\sharp \eta).
\end{equation}
Note that $\Phi(\eta)$ is well-defined and belongs to $(-\infty, +\infty]$. The potential defined above is reminiscent of \cite{CCN} or \cite{c1}. For instance, in \cite{c1}---but for MFG system with a  local dependence and under the monotonicity condition \eqref{mono}---it is proved that the MFG equilibrium can be found as a global minimum of $\Phi$. We will show in the proof of Theorem \ref{th.mainOrdre1} that the limit measure $\bar \eta$ is characterized by the optimality condition
$$
\frac{\delta \Phi}{\delta m}(\bar \eta)(\bar \eta)\leq \frac{\delta \Phi}{\delta m}(\bar \eta)(\theta)\qquad \forall \theta \in \mes(\Gamma). 
$$

Before proving that $\Phi$ is a potential for the game, let us start with preliminary remarks. The first one 
explains that the optimal curves are uniformly Lipschitz continuous. 
\begin{Lemma}\label{LipschitzBound} There exists a constant $C>0$ such that, for any $x\in \states$ and any $n\geq 0$,  
\begin{equation}\label{e.lhqblsd}
\|\dot{\overline \gamma}_x^{n+1}\|_\infty \leq C.
\end{equation}
 In particular, the sequences $(\eta^n)$ and $(\theta^n)$ are tight and
$$
\dk(e_t\sharp \eta^{n+1},e_{t'}\sharp \eta^{n+1})\leq C |t-t'|\qquad \forall t,t'\in [0,T].
$$
\end{Lemma}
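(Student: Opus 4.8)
The plan is to derive the velocity bound \eqref{e.lhqblsd} from the Hamiltonian (Pontryagin) formulation of the minimization problem defining $\bar\gamma_x^{n+1}$, and then to obtain tightness and the time-regularity of the marginals as soft consequences via Arzel\`a--Ascoli and the definition of $\dk$. As a preliminary I would record that, since $\frac{1}{\bar C}I_d\leq D^2_{pp}H\leq \bar C I_d$, the Lagrangian $L(x,v)=H^*(x,-v)$ is $C^2$, strictly convex and quadratic-like in $v$, and that $|D_pH(x,p)|\leq C(1+|p|)$ uniformly in $x$ (integrate $D^2_{pp}H$ and use that $D_pH(\cdot,0)$ is bounded on the compact torus). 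Granting the existence of the minimizer $\bar\gamma_x^{n+1}$ (already asserted in the excerpt), standard one-dimensional regularity theory for coercive, strictly convex, $C^2$ variational problems with $C^2$ spatial potential --- here $f(\cdot,m^{\eta^n}(s))$ is $C^2$ by \eqref{regucondf} --- gives $\bar\gamma_x^{n+1}\in C^2$, so that it satisfies the Euler--Lagrange equation together with the free-endpoint transversality condition at $T$. I would work through this route rather than through the value function, which is only Lipschitz in the first-order setting.

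The core of the argument is a uniform a priori bound on the costate $p(s):=-D_vL(\bar\gamma_x^{n+1}(s),\dot{\bar\gamma}_x^{n+1}(s))$. By Legendre duality the Euler--Lagrange system becomes the Hamiltonian system
$$\dot{\bar\gamma}_x^{n+1}=-D_pH(\bar\gamma_x^{n+1},p),\qquad \dot p=D_xH(\bar\gamma_x^{n+1},p)-D_xf(\bar\gamma_x^{n+1},m^{\eta^n}(s)),$$
with terminal condition $p(T)=D_xg(\bar\gamma_x^{n+1}(T),m^{\eta^n}(T))$, so that $|p(T)|\leq \bar C$ by \eqref{boundC2}. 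Computing $\frac{d}{ds}|p|^2=2\lg p,D_xH(\bar\gamma_x^{n+1},p)\rg-2\lg p,D_xf\rg$ and invoking the structural assumption \eqref{hyp:Hcondsupp} together with $\|D_xf\|_\infty\leq \bar C$, I obtain $\frac{d}{ds}|p|^2\geq -C(|p|^2+1)$. Integrating this backward from $T$ (Gr\"onwall applied to $\psi:=|p|^2+1$, which satisfies $(\log\psi)'\geq -C$) yields $\|p\|_\infty^2+1\leq (\bar C^2+1)e^{CT}$, a bound depending only on the data. Feeding it into $\dot{\bar\gamma}_x^{n+1}=-D_pH(\bar\gamma_x^{n+1},p)$ and using the linear growth of $D_pH$ gives \eqref{e.lhqblsd}.

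The two stated consequences then follow immediately. The uniform Lipschitz bound, together with $\bar\gamma_x^{n+1}(0)=x\in\states$ and compactness of $\states$, confines every optimal curve to a fixed equi-Lipschitz, uniformly bounded --- hence by Arzel\`a--Ascoli compact --- subset $K\subset\Gamma$. Since $\theta^{n+1}=\bar\gamma^{n+1}_{\cdot}\sharp m_0$ is supported on $K$ and each $\eta^n$ is a convex combination of the $\theta^k$, both families are supported on the single compact set $K$ and are therefore tight. For the time-modulus, given a $1$-Lipschitz $h:\states\to\R$ and $t,t'\in[0,T]$, I write $\int_{\states}h\,\d(e_t\sharp\eta^{n+1}-e_{t'}\sharp\eta^{n+1})=\int_\Gamma(h(\gamma(t))-h(\gamma(t')))\,\d\eta^{n+1}(\gamma)$ and bound the integrand by $|\gamma(t)-\gamma(t')|\leq C|t-t'|$ for $\eta^{n+1}$-almost every $\gamma$; taking the supremum over $h$ and using \eqref{MWdistance} gives $\dk(e_t\sharp\eta^{n+1},e_{t'}\sharp\eta^{n+1})\leq C|t-t'|$.

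I expect the main obstacle to be the uniform-in-$n$ control of the costate: one must check that \eqref{hyp:Hcondsupp} is precisely what closes the Gr\"onwall estimate, and that the only $n$-dependence entering the bound comes through $\|D_xf(\cdot,m^{\eta^n})\|_\infty$ and $\|D_xg(\cdot,m^{\eta^n}(T))\|_\infty$, which are controlled independently of the belief $\eta^n$ by \eqref{boundC2}. A secondary technical point is the justification of the Euler--Lagrange equation, i.e.\ the $C^2$ regularity of the minimizer, which I would import from standard calculus-of-variations regularity theory for uniformly convex $C^2$ Lagrangians.
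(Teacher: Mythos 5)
Your proof is correct, but it takes a genuinely different route from the paper's, which disposes of \eqref{e.lhqblsd} in two lines via the value function: under the standing assumptions the value functions $u^{n+1}$ of the control problems are uniformly Lipschitz continuous (the paper cites the appendix of \cite{Cdga13}; the bound is uniform in $n$ because \eqref{boundC2} is uniform in the measure argument), and the optimal synthesis relation $\dot{\bar\gamma}_x^{n+1}(s)=-D_pH\bigl(\bar\gamma_x^{n+1}(s),\nabla u^{n+1}(s,\bar\gamma_x^{n+1}(s))\bigr)$ (Theorem 6.4.8 of \cite{CannarsaSinestrari}) then transfers the gradient bound to the velocities. You instead run the Pontryagin/Euler--Lagrange machinery directly along each optimal curve: the costate solves $\dot p=D_xH(\gamma,p)-D_xf$, with $p(T)=D_xg$ controlled by \eqref{boundC2}, and \eqref{hyp:Hcondsupp} is indeed exactly the structural hypothesis that closes the backward Gr\"onwall estimate on $|p|^2+1$ --- your diagnosis of where the uniformity in $n$ enters (only through $\|D_xf\|_\infty$, $\|D_xg\|_\infty$, uniform by \eqref{boundC2}) is accurate. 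Since along an optimal trajectory the costate coincides with $\nabla u^{n+1}(s,\gamma(s))$, you are in effect re-deriving, curve by curve, the Lipschitz estimate the paper imports by citation: your version is self-contained and elementary (no viscosity-solution theory), while the paper's is shorter but reference-dependent; the two are morally the same estimate seen from the trajectory side and the PDE side. One small over-claim to fix: $C^2$ regularity of the minimizer is not available in general, because $s\mapsto f(\cdot,m^{\eta^n}(s))$ is only continuous in time (in particular for $n=0$, where $\eta^0\in\mes_0(\Gamma)$ is an arbitrary initial belief, so $t\mapsto e_t\sharp\eta^0$ need not be Lipschitz); the du Bois-Reymond form of the Euler--Lagrange equation gives a $C^1$ minimizer with absolutely continuous momentum, which is all your differential inequality for $|p|^2$ requires. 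Your derivation of tightness and of the modulus $\dk(e_t\sharp\eta^{n+1},e_{t'}\sharp\eta^{n+1})\leq C|t-t'|$ from \eqref{e.lhqblsd} is the same soft argument the paper declares straightforward, with the harmless proviso that it applies to $\eta^n$ for $n\geq 1$ (averages of the $\theta^k$, all supported in one equi-Lipschitz compact set), $\eta^0$ being a single fixed measure.
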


\begin{proof} 
Under our assumption on $H$, $f$ and $g$, it is known that the $(u^n)$ are uniformly Lipschitz continuous (see, for instance, the appendix of \cite{Cdga13}). As a byproduct the optimal solutions are also uniformly Lipschitz continuous thanks to the classical link between the derivative of the value function and the optimal trajectories (Theorem 6.4.8 of \cite{CannarsaSinestrari}): this is  \eqref{e.lhqblsd}. The rest of the proof is a straightforward consequence of \eqref{e.lhqblsd}. 
\end{proof}

Next we compute the derivative of $\Phi$ with respect to the measure $\eta$. Let us point out that, since $\Phi$ is not continuous and can take the value $+\infty$, the derivative, although defined by the formula \eqref{Potential}, has to be taken only at points and direction along which $\Phi$ is finite. This is in particular the case for the $\eta^n$ and the $\theta^n$. 

\begin{Lemma}\label{lem:repDPhi} For any $\eta,\eta'\in \mes (\Gamma)$ such that $\Phi(\eta), \Phi(\eta')<+\infty$, we have
$$
\frac{\delta \Phi}{\delta \eta} (\eta)(\eta'-\eta)\; =  \int_\Gamma J(\gamma(0), \gamma, \eta)\ d(\eta'-\eta)(\gamma). 
$$
\end{Lemma}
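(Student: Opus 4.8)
The plan is to compute the three terms of the derivative of $\Phi$ separately, using the definition \eqref{Potential} of the derivative with respect to a measure and exploiting the fact that $F$ and $G$ admit $f$ and $g$ as derivatives. Write $\Phi(\eta)=\Phi_1(\eta)+\Phi_2(\eta)+\Phi_3(\eta)$, where $\Phi_1(\eta)=\int_\Gamma \int_0^T L(\gamma(t),\dot\gamma(t))\,\d t\,\d\eta(\gamma)$ is the kinetic term, $\Phi_2(\eta)=\int_0^T F(e_t\sharp\eta)\,\d t$, and $\Phi_3(\eta)=G(e_T\sharp\eta)$. For the kinetic term, I would note that $\eta\mapsto\Phi_1(\eta)$ is \emph{linear} in $\eta$, so along the segment $\eta_s=(1-s)\eta+s\eta'$ one has $\Phi_1(\eta_s)=(1-s)\Phi_1(\eta)+s\Phi_1(\eta')$, and hence the difference quotient gives exactly $\frac{\delta\Phi_1}{\delta\eta}(\eta)(\eta'-\eta)=\int_\Gamma\int_0^T L(\gamma(t),\dot\gamma(t))\,\d t\,\d(\eta'-\eta)(\gamma)$. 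Here I would use that $\Phi(\eta),\Phi(\eta')<+\infty$ to guarantee all these integrals are finite, so that the difference is well-defined (not an $\infty-\infty$).

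For the potential terms I would use the chain rule together with the linearity of the push-forward. First observe that $e_t\sharp\eta_s=(1-s)(e_t\sharp\eta)+s(e_t\sharp\eta')$, since push-forward is linear in the measure. Applying the definition \eqref{Potential} of $\frac{\delta F}{\delta m}=f$ at the measure $e_t\sharp\eta$ in the direction $e_t\sharp\eta'-e_t\sharp\eta$, I would get
\begin{equation*}
\frac{\d}{\d s}\Big|_{s=0}F(e_t\sharp\eta_s)=\int_{\states}f(x,e_t\sharp\eta)\,\d\big(e_t\sharp\eta'-e_t\sharp\eta\big)(x).
\end{equation*}
Integrating in $t$ and using the change-of-variables formula for push-forwards, $\int_{\states}f(x,e_t\sharp\eta)\,\d(e_t\sharp\mu)(x)=\int_\Gamma f(\gamma(t),e_t\sharp\eta)\,\d\mu(\gamma)$, yields
\begin{equation*}
\frac{\delta\Phi_2}{\delta\eta}(\eta)(\eta'-\eta)=\int_\Gamma\int_0^T f(\gamma(t),m^\eta(t))\,\d t\,\d(\eta'-\eta)(\gamma),
\end{equation*}
recalling $m^\eta(t)=e_t\sharp\eta$. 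The terminal term $\Phi_3$ is handled identically using $\frac{\delta G}{\delta m}=g$ at $t=T$, producing $\int_\Gamma g(\gamma(T),m^\eta(T))\,\d(\eta'-\eta)(\gamma)$.

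Finally I would add the three contributions and recognize the integrand: for $\eta$-a.e.\ and $\eta'$-a.e.\ curve $\gamma$ (those of finite action, which carry all the mass since $\Phi(\eta),\Phi(\eta')<+\infty$) we have
\begin{equation*}
\int_0^T L(\gamma(t),\dot\gamma(t))\,\d t+\int_0^T f(\gamma(t),m^\eta(t))\,\d t+g(\gamma(T),m^\eta(T))=J(\gamma(0),\gamma,\eta),
\end{equation*}
which is precisely the definition of the cost $J$. This gives the claimed formula. The main obstacle I anticipate is the justification of interchanging the $t$-integral with the $s$-derivative for $\Phi_2$ (to move the derivative inside), and more delicately the fact that $\Phi$ takes values in $(-\infty,+\infty]$ and is not continuous: one must check that along the segment $\eta_s$ the quantity $\Phi(\eta_s)$ stays finite for small $s$ and that the difference quotients converge, which is where the hypothesis $\Phi(\eta),\Phi(\eta')<+\infty$ is essential. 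Since $L$ is bounded below (as $H$ is $C^2$ and quadratic-like, $L(x,v)=H^*(x,-v)$ is bounded below), the only possible blow-up is $+\infty$, and the finiteness of $\Phi(\eta)$ and $\Phi(\eta')$ controls $\Phi_1(\eta_s)\le(1-s)\Phi_1(\eta)+s\Phi_1(\eta')<+\infty$, so the argument closes.
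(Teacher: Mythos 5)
Your proof is correct and follows exactly the route the paper intends: the paper's own proof is a one-line remark that the lemma is ``a straightforward application of the definition of $\Phi$ and of the continuous derivability of $F$ and $G$,'' and your argument---splitting $\Phi$ into the linear kinetic term and the two potential terms, using linearity of the push-forward $e_t\sharp$ together with the definition \eqref{Potential} for $F$ and $G$, and invoking $\Phi(\eta),\Phi(\eta')<+\infty$ to rule out $\infty-\infty$ issues---is precisely the computation being alluded to, spelled out with appropriate care.
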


\begin{proof} This is a straightforward application of the definition of $\Phi$ in \eqref{defPhi2} and of the continuous derivability of $F$ and $G$. 
\end{proof}

By abuse of notation, we also define $\frac{\delta \Phi}{\delta \eta}(\eta)(\theta)$ for a positive Borel measure $\theta$ on $\Gamma$ by setting
$$
 \frac{\delta \Phi}{\delta \eta} (\eta)(\theta) = \int_\Gamma J(\gamma(0), \gamma, \eta) \d \theta(\gamma). 
$$
Note that, as $J$ is bounded below, the quantity $\frac{\delta \Phi}{\delta \eta} (\eta)(\theta)$ is well-defined and belongs to $(-\infty, +\infty]$.  

Next we translate the optimality property of $\bar \gamma_x^n$ to an optimality property of $\eta^n$. 
\begin{Lemma}\label{lem.optibargamma} For any $n\in \N^*$, 
$$
\frac{\delta \Phi}{\delta \eta} (\eta^n)(\theta^{n+1})= \int_{\states} J(x,\bar \gamma^{n+1}_x, \eta^{n})m_0(x)\d x
= \min_{\theta\in \mes_0(\Gamma)} \frac{\delta \Phi}{\delta \eta} (\eta^n)(\theta).
$$
\end{Lemma}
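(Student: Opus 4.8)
The plan is to establish the two equalities separately: the left one is a direct change of variables, while the right one follows from the pointwise optimality of the curves $\bar\gamma^{n+1}_x$ combined with a disintegration argument.

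First I would prove the left equality. By the (abuse of) definition of $\frac{\delta \Phi}{\delta \eta}(\eta^n)(\theta)$ given just before the statement, together with $\theta^{n+1}=\bar\gamma^{n+1}_{\cdot}\sharp m_0$, the change-of-variables formula for pushforward measures gives
$$
\frac{\delta \Phi}{\delta \eta}(\eta^n)(\theta^{n+1})=\int_\Gamma J(\gamma(0),\gamma,\eta^n)\,\d\theta^{n+1}(\gamma)=\int_\states J(\bar\gamma^{n+1}_x(0),\bar\gamma^{n+1}_x,\eta^n)\,\d m_0(x).
$$
Since $\bar\gamma^{n+1}_x(0)=x$ by construction, the right-hand side is precisely $\int_\states J(x,\bar\gamma^{n+1}_x,\eta^n)m_0(x)\,\d x$. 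The only point to verify is the measurability of the integrand, which follows from the Borel measurability of $x\mapsto\bar\gamma^{n+1}_x$ (guaranteed by the measurable selection theorem) together with the lower semicontinuity of $J$ noted after its definition.

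For the right equality I would show that $\theta^{n+1}$ is a minimizer. Since $\theta^{n+1}\in\mes_0(\Gamma)$, the first equality already yields $\min_{\theta\in\mes_0(\Gamma)}\frac{\delta \Phi}{\delta \eta}(\eta^n)(\theta)\le\int_\states J(x,\bar\gamma^{n+1}_x,\eta^n)m_0(x)\,\d x$. For the reverse inequality, fix any $\theta\in\mes_0(\Gamma)$; we may assume $\frac{\delta \Phi}{\delta \eta}(\eta^n)(\theta)<+\infty$, else there is nothing to prove. As $e_0\sharp\theta=m_0$, the disintegration theorem provides a Borel family $(\theta_x)_{x\in\states}$ of probability measures on $\Gamma$, with $\theta_x$ concentrated on $\{\gamma:\gamma(0)=x\}$, such that $\theta=\int_\states\theta_x\,\d m_0(x)$. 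Then by Fubini
$$
\frac{\delta \Phi}{\delta \eta}(\eta^n)(\theta)=\int_\states\Big(\int_\Gamma J(x,\gamma,\eta^n)\,\d\theta_x(\gamma)\Big)\,\d m_0(x).
$$
Because $\theta_x$ is carried by curves starting at $x$ and $\bar\gamma^{n+1}_x$ minimizes $\gamma\mapsto J(x,\gamma,\eta^n)$ over such curves (curves not in $H^1$ contributing $+\infty$), the inner integral is $\ge J(x,\bar\gamma^{n+1}_x,\eta^n)$ for $m_0$-a.e. $x$. Integrating this pointwise bound against $m_0$ gives $\frac{\delta \Phi}{\delta \eta}(\eta^n)(\theta)\ge\int_\states J(x,\bar\gamma^{n+1}_x,\eta^n)m_0(x)\,\d x$, the desired inequality.

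The routine parts (the pushforward change of variables and Fubini) are harmless; the step demanding the most care is the disintegration of $\theta$ along $e_0$ and the justification that the pointwise optimality of $\bar\gamma^{n+1}_x$ integrates correctly, that is, the joint measurability needed both to apply Fubini and to assert the pointwise inequality $m_0$-almost everywhere. The lower semicontinuity of $J$ is exactly what reconciles this measurability with the existence of the minimizing selections $\bar\gamma^{n+1}_x$, so the whole scheme is consistent.
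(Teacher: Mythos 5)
Your proposal is correct and follows essentially the same route as the paper's proof: the first equality is the pushforward definition of $\theta^{n+1}$, and the second is obtained by disintegrating $\theta=\int_{\T^d}\theta_x\,\d m_0(x)$ along $e_0$, invoking the pointwise optimality of $\bar\gamma^{n+1}_x$ (with $J=+\infty$ off $H^1$ handling non-absolutely-continuous curves), and integrating against $m_0$. Your added attention to the measurability of $x\mapsto J(x,\bar\gamma^{n+1}_x,\eta^n)$ is a fair elaboration of what the paper leaves implicit via the measurable selection theorem.
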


\begin{proof} The first equality is just the definition of $\theta^{n+1}$. It remains to check that, for any  $\theta\in \mes_0(\Gamma)$, 
$$
 \int_{\states} J(x,\bar \gamma^{n+1}_x, \eta^{n})m_0(x)\d x
\leq \int_{\Gamma} J(\gamma(0), \gamma, \eta^n) \d \theta(\gamma).
$$
As $m_0= e_0\sharp \theta$, we can disintegrate $\theta$ into $\theta= \mint \theta_x \d m_0(x)$, where $\theta_x\in \mes(\Gamma)$ with $\gamma(0)=x$ for $\theta_x-$a.e. $\gamma$. By optimality of $\bar \gamma^{n+1}_x$ we have, for $m_0-$a.e. $x\in \states$, 
$$
 J(x,\bar \gamma^{n+1}_x, \eta^n)   \leq \int_{\Gamma} J(x, \gamma, \eta^n)\ \d \theta_x(\gamma)
$$
and therefore, integrating with respect to $m_0$: 
%\begin{equation*}
%\begin{split}
$$
\mint  J(x,\bar \gamma^{n+1}_x, \eta^n) m_0(x) \d x\leq  \mint \int_{\Gamma} J(x, \gamma, \eta^n)\ \d \theta_x(\gamma) m_0(x)\d x
=  \int_{\Gamma} J(\gamma(0), \gamma, \eta^n) \d \theta(\gamma).
$$
%\end{split}
%\end{equation*}
\end{proof}

The next proposition states that the potential $\Phi$ is indeed almost decreasing along the sequence $(\eta^n)$. 

\begin{Proposition}\label{prop:Phi-Phi} There is a constant $C>0$ such that, for any $n\in \N^*$, we have 
\be\label{kjhbze}
\Phi(\eta^{n+1}) \leq \Phi(\eta^n) + \frac{1}{n+1} \frac{\delta \Phi}{\delta \eta} (\eta^n)(\theta^{n+1}-\eta^n)+ \frac{C}{(n+1)^2} 
\ee
where 
\begin{equation}
\begin{split}
\frac{\delta \Phi}{\delta \eta} (\eta^n)(\theta^{n+1}-\eta^n) = \int_\Gamma   J( \gamma(0), \gamma, \eta^n)\ \d(\theta^{n+1}-\eta^n)(\gamma) \leq 0. 
\end{split}
\end{equation}
\end{Proposition}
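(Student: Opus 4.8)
The plan is to split $\Phi$ into its linear (kinetic) part and its two potential parts, estimate each separately, and then reassemble the first-order terms into $\frac{\delta \Phi}{\delta \eta}(\eta^n)(\theta^{n+1}-\eta^n)$. Write $\Phi(\eta) = \mathcal{L}(\eta) + \int_0^T F(e_t\sharp\eta)\,\d t + G(e_T\sharp\eta)$, where $\mathcal{L}(\eta) := \int_\Gamma \int_0^T L(\gamma(t),\dot\gamma(t))\,\d t\,\d\eta(\gamma)$ is linear in $\eta$. The structural fact driving everything is that $e_t\sharp$ is linear, so by \eqref{defetak} we have $\eta^{n+1} = (1-\tfrac{1}{n+1})\eta^n + \tfrac{1}{n+1}\theta^{n+1}$ and hence $e_t\sharp\eta^{n+1}$ is the same convex combination of $e_t\sharp\eta^n$ and $e_t\sharp\theta^{n+1}$.

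First I would handle the kinetic term by linearity: $\mathcal{L}(\eta^{n+1}) - \mathcal{L}(\eta^n) = \frac{1}{n+1}\mathcal{L}(\theta^{n+1}-\eta^n)$, all quantities being finite for $n\geq 1$ since Lemma \ref{LipschitzBound} bounds the kinetic energy of each $\theta^k$ (hence of each $\eta^n$) uniformly. For the running potential I would apply, at each fixed $t$, the quadratic remainder estimate for $F$ recalled just after Lemma \ref{lem.DecreasePhi} with $m=e_t\sharp\eta^n$, $m'=e_t\sharp\theta^{n+1}$ and $s=\frac{1}{n+1}$, then integrate in $t\in[0,T]$; the remainder contributes at most $\frac{CT}{(n+1)^2}$. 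The terminal potential $G$ is treated identically at $t=T$. Summing the three contributions gives
\[
\Phi(\eta^{n+1}) - \Phi(\eta^n) \leq \frac{1}{n+1}\Big[ \mathcal{L}(\theta^{n+1}-\eta^n)
+ \int_0^T \mint f(x,e_t\sharp\eta^n)\,\d(e_t\sharp\theta^{n+1}-e_t\sharp\eta^n)(x)\,\d t
+ \mint g(x,e_T\sharp\eta^n)\,\d(e_T\sharp\theta^{n+1}-e_T\sharp\eta^n)(x) \Big] + \frac{C}{(n+1)^2}.
\]

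The next step is to recognize the bracket as $\frac{\delta \Phi}{\delta \eta}(\eta^n)(\theta^{n+1}-\eta^n)$. By Lemma \ref{lem:repDPhi} this derivative equals $\int_\Gamma J(\gamma(0),\gamma,\eta^n)\,\d(\theta^{n+1}-\eta^n)(\gamma)$; expanding $J$ and using Fubini together with the fact that $L(\gamma(t),\dot\gamma(t))$ integrates to $\mathcal{L}$ while $f(\gamma(t),m^{\eta^n}(t))$ and $g(\gamma(T),m^{\eta^n}(T))$ depend on $\gamma$ only through $e_t(\gamma)$ and $e_T(\gamma)$, the definition of push-forward turns $\int_\Gamma f(e_t(\gamma),\cdot)\,\d(\theta^{n+1}-\eta^n)(\gamma)$ into $\mint f(x,\cdot)\,\d(e_t\sharp\theta^{n+1}-e_t\sharp\eta^n)(x)$, matching the bracket term by term. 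I expect this Fubini interchange, together with the finiteness bookkeeping (the measures involved have uniformly controlled kinetic energy by Lemma \ref{LipschitzBound}, and $f,g$ are bounded), to be the one place that genuinely needs care.

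Finally, for the sign I would invoke Lemma \ref{lem.optibargamma}: since $\eta^n = \frac{1}{n}\sum_{k=1}^n \theta^k$ is a convex combination of elements of $\mes_0(\Gamma)$, it lies in $\mes_0(\Gamma)$, and Lemma \ref{lem.optibargamma} gives $\frac{\delta \Phi}{\delta \eta}(\eta^n)(\theta^{n+1}) = \min_{\theta\in\mes_0(\Gamma)} \frac{\delta \Phi}{\delta \eta}(\eta^n)(\theta) \leq \frac{\delta \Phi}{\delta \eta}(\eta^n)(\eta^n)$. Subtracting yields $\frac{\delta \Phi}{\delta \eta}(\eta^n)(\theta^{n+1}-\eta^n)\leq 0$, which together with the displayed inequality gives \eqref{kjhbze} and completes the proof.
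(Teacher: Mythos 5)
Your proposal is correct and follows essentially the same route as the paper: the paper expands $\Phi(\eta^{n+1})-\Phi(\eta^n)=\frac{1}{n+1}\int_0^1 \frac{\delta \Phi}{\delta \eta}\bigl((1-s)\eta^n+s\eta^{n+1}\bigr)(\theta^{n+1}-\eta^n)\,\d s$ and bounds the variation $R(s)$ of the derivative along the segment by $C/(n+1)$ using the Lipschitz continuity of $f$ and $g$ together with a $\dk$ estimate, which is exactly the quadratic-remainder bound you apply separately to $F$ and $G$ (the kinetic term dropping out by linearity in both treatments, and your Fubini/push-forward identification being precisely the content of Lemma \ref{lem:repDPhi}). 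The nonpositivity of $\frac{\delta \Phi}{\delta \eta}(\eta^n)(\theta^{n+1}-\eta^n)$ is obtained in both arguments from the optimality of $\theta^{n+1}$ in Lemma \ref{lem.optibargamma}, using as you do that $\eta^n\in\mes_0(\Gamma)$ for $n\in\N^*$.
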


\begin{proof}  Recalling \eqref{defetak}, we have
\begin{equation}
\begin{split}
\Phi(\eta^{n+1})- \Phi(\eta^n) = & \int_0^1 \frac{\delta \Phi}{\delta \eta} ((1-s)\eta^n+s\eta^{n+1})(\eta^{n+1}-\eta^n) \d s \\
 = & \frac{1}{(n+1)} \int_0^1 \frac{\delta \Phi}{\delta \eta} ((1-s)\eta^n+s\eta^{n+1})(\theta^{n+1}-\eta^n) \d s.
%\leq & \ds \ds \frac{1}{(n+1)}  \frac{\delta \Phi}{\delta \eta} (\eta^n)(\theta^{n+1}-\eta^n) + \frac{C}{(n+1)}
%  \int_0^1 \dk ((1-s)\eta^n+s\eta^{n+1}, \eta^n)\ ds 
\end{split}
\end{equation}
Let us estimate the right-hand side of the inequality. For any $s\in [0,1]$,  Lemma \ref{lem:repDPhi} states that
\begin{equation}
\begin{split}
\frac{\delta \Phi}{\delta \eta} ((1-s)\eta^n+s\eta^{n+1})(\theta^{n+1}-\eta^n) &= \int_\Gamma J(\gamma(0), \gamma, (1-s)\eta^n+s\eta^{n+1})) \d(\theta^{n+1}-\eta^n)(\gamma)\\
%% \ds = \;  \int_\Gamma \int_0^T L(\gamma(t),\dot \gamma(t))\ dt d(\theta^{n+1}-\eta^n)(\gamma) + \int_0^T\int_{\states} f(\gamma(t), e_t\sharp ((1-s)\eta^{n+1}+s \eta^n)) \d t \d(\theta^{n+1}-\eta^n)(\gamma)\\
% \ds \qquad  +  \int_{\states} g(\gamma(T, e_T\sharp ((1-s)\eta^{n+1}+s \eta^n))\ d(\theta^{n+1}-\eta^n)(\gamma) \\
&= \int_\Gamma J(\gamma(0), \gamma, \eta^n) \d (\theta^{n+1}-\eta^n)(\gamma)+R(s)\\
% \leq \ds \; \int_\Gamma \int_0^T L(\gamma(t),\dot \gamma(t))\ dt d(\theta^{n+1}-\eta^n)(\gamma) + \int_0^T\int_{\states} f(\gamma(t), e_t\sharp \eta^n)\ dtd(\theta^{n+1}-\eta^n)(\gamma)\\
% \ds \qquad  +  \int_{\states} g(\gamma(T, e_T\sharp \eta^n)\ d(\theta^{n+1}-\eta^n)(\gamma) + R(s)
% C \int_0^T \dk(e_t\sharp ((1-s)\eta^{n+1}+s \eta^n)), e_t\sharp \eta^n) \ dt +  \dk(e_T\sharp ((1-s)\eta^{n+1}+s \eta^n)), e_T\sharp \eta^n) \ dt
\end{split}
\end{equation}
where, by the definition of $J$ and Lipschitz continuity of $f$ and $g$, 
\begin{equation}
\begin{split}\label{defR}
R(s) = & \int_\Gamma \int_0^T \big( f(\gamma(t), e_t\sharp((1-s)\eta^n+s\eta^{n+1})) - f(\gamma(t), e_t\sharp \eta^n) \big) \d t  \d (\theta^{n+1}-\eta^n)(\gamma) \\
&+ \int_\Gamma \big( g(\gamma(T), e_T\sharp((1-s)\eta^n+s\eta^{n+1})) - g(\gamma(T), e_T\sharp \eta^n)\big) \d (\theta^{n+1}-\eta^n)(\gamma) \\
\leq & \;  C \sup_{t\in [0,T]} \dk\left(e_t\sharp ((1-s)\eta^{n+1}+s \eta^n)), e_t\sharp \eta^n\right). 
\end{split}
\end{equation}
Note that, by the definition of $\dk$, we have for any $t\in [0,T]$,
$$
\begin{array}{l}
\ds \dk(e_t\sharp ((1-s)\eta^{n+1}+s \eta^n)), e_t\sharp \eta^n) \\
 \qquad \qquad \ds \leq  \sup_\xi \mint \xi(x)  \d (e_t\sharp ((1-s)\eta^{n+1}+s \eta^n)(x)- \mint \xi(x)\ d(e_t\sharp \eta^n)(x) \\
 \qquad \qquad \ds \leq (1-s) \sup_\xi \mint \xi(x)\  d(e_t\sharp \eta^{n+1})(x)- \mint \xi(x)\ d(e_t\sharp \eta^n)(x) \\
 \qquad \qquad \ds \leq \frac{(1-s)}{n+1} \sup_\xi \mint \xi(x)\  d (e_t\sharp (\theta^{n+1}-  \eta^n) )(x) \\
 \qquad \qquad \ds \leq \frac{(1-s)}{n+1} \sup_\xi \mint (\xi(x)-\xi(0))\  d(e_t\sharp \theta^{n+1}- e_t\sharp \eta^n)(x) \leq \frac{C}{n+1}, 
\end{array}
$$
where the supremum is taken over the set of Lipschitz maps $\xi: \states \to \R$ with Lipschitz constant not larger than $1$. Therefore 
$$
\Phi(\eta^{n+1})- \Phi(\eta^n) \leq \frac{1}{(n+1)} \int_\Gamma J(\gamma(0), \gamma, \eta^n)\ d(\theta^{n+1}-\eta^n)(\gamma)+\frac{C}{(n+1)^2},
$$
where the first term in the right-hand side is nonpositive thanks to Lemma \ref{lem.optibargamma}. 
\end{proof}

%%%%%%%%%%%%%%%%%%%%%%%%%%%%%%%
\subsection{Convergence of the Fictitious Play}

In this subsection, we prove Theorem \ref{th.mainOrdre1}. Recall that Lemma \ref{LipschitzBound} states that the sequence $(\eta^n)$ is tight. 
%
%We first show a property of cluster point of this sequence. 
%
%
%\begin{Lemma}
%For every arbitrary $ \eta_1 , \eta_2 \in \mes (\Gamma)$ we have:
%$$\sup_{t\in [0,T]} \dk (e_t \sharp \eta_1 , e_t \sharp \eta_2) \leq \dk (\eta_1 , \eta_2).$$
%\end{Lemma}
%\begin{proof}
%Fix $t\in [0,T]$. Consider an arbitrary 1-Lipschitz continuous function $\psi: \states \to \R$ and define $\tilde \psi : \Gamma \to \R$ as $\tilde \psi (\gamma) = \psi (\gamma(t))$. Obviously $\tilde \psi$ is 1-Lipschitz continuous since:
%$$ \abs \tilde \psi (\gamma_1) - \tilde \psi (\gamma_2) \abs = \abs \psi (\gamma_1(t)) - \psi (\gamma_2(t)) \abs \leq \abs \gamma_1(t) - \gamma_2(t) \abs \leq \norm \gamma_1 - \gamma_2 \norm_{\infty}.$$
%We then have:
%$$\leftnorm \mint \psi (x) \d (e_t \sharp \eta_2 - e_t \sharp \eta_1)(x) \rightnorm =\leftnorm \int_{\Gamma} \tilde \psi (\gamma) \d (\eta_2 - \eta_1)(\gamma) \rightnorm \leq \dk(\eta_1 , \eta_2).$$
%Since $\psi$ is arbitrary, this proves that $\dk (e_t\sharp \eta_1 , e_t\sharp \eta_2) \leq \dk (\eta_1 , \eta_2) $, which completes our proof by taking $\sup$ for all $t\in [0,T]$.
%\end{proof}
We next Lemma characterizes the cluster distribution :
\begin{Lemma}\label{lem:key} Any cluster point $\bar \eta$ of the sequence $(\eta^n)$ satisfies  
\begin{equation}\label{eq.dPhideta}
\frac{\delta \Phi}{\delta \eta}(\bar \eta)(\bar \eta) \leq \frac{\delta \Phi}{\delta \eta}(\bar \eta)(\theta) \qquad \forall \theta \in P_0(\Gamma),
\end{equation}
which means that  $\bar \eta-$a.e. $\gamma$ is optimal for the map $\tilde \gamma \to J(\gamma(0), \tilde \gamma, \bar \eta)$ under the constraint $\tilde \gamma(0)=\gamma(0)$. 
 \end{Lemma}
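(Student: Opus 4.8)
The plan is to read Proposition \ref{prop:Phi-Phi} as an ``almost monotonicity'' of the potential $\Phi$ along the sequence $(\eta^n)$, to deduce from it that the \emph{exploitability gap}
$$
b_n := -\frac{\delta \Phi}{\delta \eta}(\eta^n)(\theta^{n+1}-\eta^n) = \frac{\delta \Phi}{\delta \eta}(\eta^n)(\eta^n) - \min_{\theta\in \mes_0(\Gamma)} \frac{\delta \Phi}{\delta \eta}(\eta^n)(\theta) \geq 0
$$
tends to $0$, and then to pass to the limit in the optimality inequality $\frac{\delta \Phi}{\delta \eta}(\eta^n)(\eta^n) \leq \frac{\delta \Phi}{\delta \eta}(\eta^n)(\theta) + b_n$. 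The identity above combines the abuse-of-notation definition of $\frac{\delta \Phi}{\delta \eta}(\eta)(\theta)$ with Lemma \ref{lem.optibargamma}, which identifies $\frac{\delta \Phi}{\delta \eta}(\eta^n)(\theta^{n+1})$ with the minimum over $\mes_0(\Gamma)$. First I would observe that $\Phi$ is bounded below (the Lagrangian $L(x,v)=H^*(x,-v)\geq -H(x,0)$ is bounded below on $\states$, and $F,G$ are bounded since $f,g$ are). Telescoping the estimate of Proposition \ref{prop:Phi-Phi}, $\Phi(\eta^{n+1})\leq \Phi(\eta^n)-\frac{b_n}{n+1}+\frac{C}{(n+1)^2}$, then yields $\sum_{n} b_n/(n+1)<+\infty$.

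The \emph{main obstacle} is to upgrade this summability into $\lim_n b_n=0$ for the \emph{full} sequence, which is needed because the statement concerns \emph{any} cluster point: along a subsequence $\eta^{n_k}\to\bar\eta$ one must know that $b_{n_k}\to 0$ along that same subsequence, so a mere Cesàro convergence does not suffice. I would therefore establish the slow-variation bound $|b_{n+1}-b_n|\leq C/n$ and then invoke the second part of Lemma \ref{Convtozero}. Writing $b_n = I_n - II_n$ with $I_n=\int_\Gamma J(\gamma(0),\gamma,\eta^n)\,\d\eta^n$ and $II_n=\min_{\theta\in\mes_0(\Gamma)}\int_\Gamma J(\gamma(0),\gamma,\eta^n)\,\d\theta$, the key is that by \eqref{defetak} and Lemma \ref{LipschitzBound} the marginals move slowly, $\sup_t \dk(m^{\eta^{n+1}}(t),m^{\eta^n}(t))\leq C/(n+1)$, so that by Lipschitz continuity of $f$ and $g$ the finite part of $J$ satisfies $\sup_\gamma |J(\gamma(0),\gamma,\eta^{n+1})-J(\gamma(0),\gamma,\eta^n)|\leq C/n$. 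Since all measures $\eta^n,\theta^{n}$ are carried by uniformly Lipschitz curves (Lemma \ref{LipschitzBound}), on which $J$ is bounded, both $|I_{n+1}-I_n|$ and $|II_{n+1}-II_n|$ are controlled by $C/n$ (the latter because the minimum of two functions differing by at most $\varepsilon$ differ by at most $\varepsilon$), giving the claimed slow variation and hence $b_n\to 0$.

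Finally I would pass to the limit. Fix a cluster point $\bar\eta=\lim_k \eta^{n_k}$; since $e_0$ is continuous and $e_0\sharp\eta^{n_k}=m_0$, one has $\bar\eta\in\mes_0(\Gamma)$. By pointwise weak convergence $e_t\sharp\eta^{n_k}\to e_t\sharp\bar\eta$, the equicontinuity in $t$ from Lemma \ref{LipschitzBound}, and compactness of $(\mes(\states),\dk)$, an Arzel\`a--Ascoli argument gives $\sup_t \dk(m^{\eta^{n_k}}(t),m^{\bar\eta}(t))\to 0$; hence $J(\cdot,\cdot,\eta^{n_k})\to J(\cdot,\cdot,\bar\eta)$ uniformly (on the finite part). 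For the right-hand side this yields, for any fixed $\theta\in\mes_0(\Gamma)$, $\frac{\delta \Phi}{\delta \eta}(\eta^{n_k})(\theta)+b_{n_k}\to \frac{\delta \Phi}{\delta \eta}(\bar\eta)(\theta)$. For the left-hand side I would write $\int J(\cdot,\cdot,\eta^{n_k})\,\d\eta^{n_k}=\int J(\cdot,\cdot,\bar\eta)\,\d\eta^{n_k}+o(1)$ and use that $J(\cdot,\cdot,\bar\eta)$ is lower semicontinuous on $\Gamma$ and bounded below, so that weak convergence gives $\liminf_k \int J(\cdot,\cdot,\bar\eta)\,\d\eta^{n_k}\geq \int J(\cdot,\cdot,\bar\eta)\,\d\bar\eta = \frac{\delta \Phi}{\delta \eta}(\bar\eta)(\bar\eta)$. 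Taking $\liminf$ in the inequality $\frac{\delta \Phi}{\delta \eta}(\eta^{n_k})(\eta^{n_k}) \leq \frac{\delta \Phi}{\delta \eta}(\eta^{n_k})(\theta) + b_{n_k}$ produces \eqref{eq.dPhideta}. The reformulation ``$\bar\eta$-a.e.\ $\gamma$ is optimal'' then follows, exactly as in Lemma \ref{lem.optibargamma}, by disintegrating $\bar\eta=\int_\states \bar\eta_x\,\d m_0(x)$ and noting that $\int_\Gamma J(x,\gamma,\bar\eta)\,\d\bar\eta_x \geq \inf_{\gamma(0)=x} J(x,\gamma,\bar\eta)$ pointwise while the $m_0$-integrals coincide, forcing equality $m_0$-a.e.
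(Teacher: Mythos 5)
Your proposal is correct and follows essentially the same route as the paper: nonnegativity and summability of the exploitability gap $a_n$ via the almost-decrease of $\Phi$, the slow-variation bound $|a_{n+1}-a_n|\leq C/n$ combined with Lemma \ref{Convtozero} to get $a_n\to 0$ along the full sequence, uniform convergence of the marginals $t\mapsto e_t\sharp\eta^{n_i}$ for the right-hand side, lower semicontinuity of $J$ for the left-hand side, and disintegration with a measurable selection for the a.e.-optimality reformulation. The only (immaterial) difference is that you bound the variation of the minimum value by the generic fact that minima of uniformly close functions are close, where the paper runs a two-sided comparison using the specific minimizers $\theta^{n+1}$ and $\theta^{n+2}$.
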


\begin{proof}
Let us define:
$$a^{n+1}:= -\frac{\delta \Phi}{\delta \eta} (\eta^n)(\theta^{n+1}-\eta^n) = -\int_{\Gamma} J(\gamma(0) , \gamma , \eta^n) \d (\theta^{n+1} - \eta^n)$$ 
$$= \int_{\Gamma} J(\gamma(0) , \gamma , \eta^{n}) \d \eta^{n} (\gamma) - \min_{\theta \in \mes_0 (\states)} \int_{\Gamma} J(\gamma(0) , \gamma , \eta^{n}) \d \theta(\gamma),$$
where the last equality come from Lemma \ref{lem.optibargamma}.
Then according to Proposition \ref{prop:Phi-Phi} the sequence $(a^n)$ is non-negative and, by \eqref{kjhbze}, the quantity $ \sum_k a^k/k$ is finite (because $\Phi$ is bounded below). Therefore by Lemma \ref{Convtozero} we have:
\begin{equation}\label{MondShap}
\lim_{N\to+\infty} \frac{1}{N} \sum_{k=1}^N a^k = 0.
\end{equation}
Let us now check that $a_n\leq C/n$ for some constant $C$. By arguments similar to the ones in the proof of Proposition \ref{prop:Phi-Phi}, we have, for any $\theta \in P_0(\Gamma)$,  
\be\label{kjhbze2}
\left|\frac{\delta \Phi}{\delta \eta} (\eta^n)(\theta)- \frac{\delta \Phi}{\delta \eta} (\eta^{n+1})(\theta)\right| \leq \frac{C}{n}.
\ee
On the other hand, by optimality of $\theta^{n+1}$ and $\theta^{n+2}$ in Lemma \ref{lem.optibargamma} and \eqref{kjhbze2}, we have
$$
\begin{array}{rl}
\ds \frac{\delta \Phi}{\delta \eta} (\eta^n)(\theta^{n+1}) \; = & \ds \min_{\theta \in \mes_0 (\states)} \int_{\Gamma} J(\gamma(0) , \gamma , \eta^{n}) \d \theta(\gamma)
%\\
\leq 
%& \ds 
\int_{\Gamma} J(\gamma(0) , \gamma , \eta^{n}) \d \theta^{n+2}(\gamma) \\
& \\
\leq & \ds \int_{\Gamma} J(\gamma(0) , \gamma , \eta^{n+1}) \d \theta^{n+2}(\gamma) +C/n = \frac{\delta \Phi}{\delta \eta} (\eta^{n+1})(\theta^{n+2}) +C/n\\
= & \ds \min_{\theta \in \mes_0 (\states)} \int_{\Gamma} J(\gamma(0) , \gamma , \eta^{n+1}) \d \theta(\gamma)+C/n
 \\
\leq & \ds \int_{\Gamma} J(\gamma(0) , \gamma , \eta^{n+1}) \d \theta^{n+1}(\gamma) +C/n
%\\
= 
%& \ds 
\frac{\delta \Phi}{\delta \eta} (\eta^n)(\theta^{n+1})+C/n,
\end{array}
$$
which proves that 
$$
\left| \frac{\delta \Phi}{\delta \eta} (\eta^n)(\theta^{n+1})- \frac{\delta \Phi}{\delta \eta} (\eta^{n+1})(\theta^{n+2})\right|
\leq C/n.
$$
So we have:
\begin{equation*}
\begin{split}
\left|a^n-a^{n+1}\right| &=  \left|\frac{\delta \Phi}{\delta \eta} (\eta^{n})(\eta^{n} - \theta^{n+1})  - \frac{\delta \Phi}{\delta \eta} (\eta^{n+1})(\eta^{n+1} - \theta^{n+2})\right| \\
&\leq  \left|\frac{\delta \Phi}{\delta \eta} (\eta^{n})(\eta^{n})- \frac{\delta \Phi}{\delta \eta} (\eta^{n+1})(\eta^{n+1})\right|
+  \left| \frac{\delta \Phi}{\delta \eta} (\eta^n)(\theta^{n+1})- \frac{\delta \Phi}{\delta \eta} (\eta^{n+1})(\theta^{n+2})\right| \\
& \leq \left|\frac{\delta \Phi}{\delta \eta} (\eta^{n})(\eta^{n}-\eta^{n+1})\right|+C/n
=  \frac{1}{n+1} \left| \frac{\delta \Phi}{\delta \eta} (\eta^n)(\theta^{n+1}-\eta^n)\right|+C/n \leq C/n.
\end{split}
\end{equation*}
By \eqref{MondShap} and the above estimate, we conclude that $a_n \to 0$ thanks to Lemma \ref{Convtozero}.

Let now $\bar \eta$ be any cluster point of the sequence $(\eta^n)$. Let us check that \eqref{eq.dPhideta} holds.  
Let $\theta \in \mes_0 (\states)$. Then, from Lemma \ref{lem.optibargamma}, for every $n \in \N$ we have:
$$
\frac{\delta \Phi}{\delta \eta}(\eta^n)(\eta^n) - a_n = \frac{\delta \Phi}{\delta \eta}(\eta^n)(\theta^{n+1}) \leq \frac{\delta \Phi}{\delta \eta}(\eta^n)(\theta).$$
If $( \eta^{n_i})_{i\in \N} $ is such that $\eta^{n_i} \to \bar \eta $, then:
$$\forall \gamma \in \Gamma : \quad \leftnorm J(\gamma , \gamma(0) , \bar \eta ) - J(\gamma , \gamma(0) , \eta^{n_i} )  \rightnorm \leq K \sup _{t\in [0,T]} \dk (e_t \sharp \eta^{n_i} , e_t \sharp \bar \eta) ,$$
where the last term tends to $0$ because the maps $t\to e_t \sharp \eta^{n_i}$ are uniformly continuous (from Lemma \ref{LipschitzBound}) and converges pointwisely (and thus uniformly) to   $t\to e_t \sharp \bar \eta$. This yields that $\ds (\frac{\delta \Phi}{\delta \eta}(\eta^{n_i})(\theta))$ converges to $\ds\frac{\delta \Phi}{\delta \eta}(\bar \eta)(\theta)$. On the other hand, by lower semicontinuity of the map $\gamma\to J(\gamma,\gamma(0),\bar \eta)$ on $\Gamma$,  we have  
$$
\frac{\delta \Phi}{\delta \eta}(\bar \eta)(\bar \eta)\leq \liminf \frac{\delta \Phi}{\delta \eta}(\bar \eta)(\eta^{n_i})= \liminf \frac{\delta \Phi}{\delta \eta}(\eta^{n_i})(\eta^{n_i}),
$$
which proves \eqref{eq.dPhideta}. 

Let us check that $\bar \eta-$a.e. $\gamma$ is optimal for the map $\tilde \gamma \to J(\gamma(0), \tilde \gamma, \bar \eta)$ under the constraint $\tilde \gamma(0)=\gamma(0)$. Let $\theta= \mint \delta_{\bar \gamma_x}m_0(x)\d x$ where $\bar \gamma_x$ is (a measurable selection of) an optimal solution for $\tilde \gamma\to J(x,\tilde \gamma, \bar \eta)$ under the constraint $\tilde \gamma(0)=x$. If we disintegrate $\bar \eta$ into $\bar \eta= \mint \bar \eta_x m_0(x)\d x$, then, for $m_0-$a.e. $x$ and $\bar \eta_x-$a.e. $\gamma$ we have 
\be\label{liuqnesdfclm}
J(x,\bar \gamma_x,\bar \eta)\leq J(x,\gamma, \bar \eta).
\ee
Integrating over $\bar \eta_x$ and then against $m_0$ then implies that 
$$
\frac{\delta \Phi}{\delta m}(\bar \eta)(\theta)= \mint J(x,\bar \gamma_x,\bar \eta)m_0(x) \d x \leq 
 \int_{\Gamma} J(\gamma(0),\gamma,\bar \eta) \d \bar \eta(\gamma)=  \frac{\delta \Phi}{\delta m}(\bar \eta)(\bar \eta).
 $$
 As the reverse inequality always holds, this proves that there must be an equality in \eqref{liuqnesdfclm} a.e., which proves the claim. 
\end{proof}

\begin{proof}[Proof of Theorem \ref{th.mainOrdre1}.] Let $(\bar \eta,\bar \theta)$ be the limit of a converging subsequence $(\eta^{n_i},\theta^{n_i})$. We set $$
\bar u (t,x):= \inf_{\gamma\in \Gamma, \ \gamma(t)=x} J(t,x,\gamma, \bar \eta) \qquad {\rm and}\qquad 
\bar m(t):= e_t \sharp \bar \eta. 
$$
By standard argument in optimal control, we know that $\bar u$ is a viscosity solution to \eqref{MFG}-(i) with terminal condition $\bar u(T,x)= g(x,\bar m(T))$. Moreover, $\bar u$ is Lipschitz continuous and semiconcave (cf. for instance Lemma 5.2 in \cite{Cdga13}). 

It remains to check that $\bar m$ satisfies \eqref{MFG}-(ii). By Lemma \ref{lem:key}, we know that
$$
\frac{\delta \Phi}{\delta \eta}(\bar \eta)(\bar \eta) \leq \frac{\delta \Phi}{\delta \eta}(\bar \eta)(\theta) \qquad \forall \theta \in P_0(\Gamma),
$$
which means that $\bar \eta-$a.e. $\gamma$ is optimal for the map $\tilde \gamma \to J(\gamma(0), \tilde \gamma, \bar \eta)$ under the constraint $\tilde \gamma(0)=\gamma(0)$. 
%
%
%By disintegration Theorem (see Theorem 5.3.1 in \cite{AGS}) we can disintegrate $\bar \eta$ into $\bar \eta= \mint \bar \eta_x dm_0(x)$ so that, for $m_0-$a.e. $x$, 
%$$\bar \eta_x \in \mes(\Gamma) , \quad \text{supp}(\bar \eta_x) \subset \{ \gamma \in \Gamma \abs \gamma (0)=x \}.$$
%Combining \eqref{eq.dPhideta} and Lemma \ref{lem:repDPhi}, we have therefore, for $m_0-$a.e. $x\in \states$, 
%$$
%J(x,\tilde \gamma, \bar \eta) = \inf_{\gamma\in H^1, \ \gamma(0)=0} J(x,\gamma, \bar \eta) =\bar u(0,x)\qquad \mbox{\rm for $\bar \eta_x-$a.e. $\tilde \gamma$.}
%$$
Following Theorem 6.4.9 in \cite{CannarsaSinestrari}, the optimal solution for $J(x,\cdot, \bar \eta)$ is unique at  any point of differentiability of $\bar u(0,\cdot)$ (let us call it $\bar \gamma_x$). Disintegrating $\bar \eta$ into $\bar \eta= \mint \bar \eta_x dm_0(x)$, we have therefore, since $m_0$ is absolutely continuous,  
$$
\bar \eta_x = \delta_{\bar \gamma_x} \qquad \mbox{\rm for $m_0-$a.e. $x\in \states$,}
$$
so that 
\begin{equation}\label{identifbareta}
\bar\eta = \int_{\states} \delta_{\bar \gamma_x}m_0(x)\d x\qquad {\rm and} \qquad  \bar m(t) = \bar \gamma_\cdot(t)\sharp m_0 \qquad \forall t\in [0,T].
\end{equation}
Let us also recall  that the derivative of $\bar u(t,\cdot)$ exists along the optimal solution $\bar \gamma_x$ and that
$$
\dot{\bar \gamma}_x(t) = -D_pH(\bar \gamma_x(t), \nabla \bar u(t, \bar \gamma_x(t))\qquad \forall t\in (0,T]
$$
(see  Theorems 6.4.7 and 6.4.8  of \cite{CannarsaSinestrari}). This proves that $\bar m$ is a solution in the sense of distribution  of \eqref{MFG}-(ii) (where we denote by $\nabla \bar u$ any fixed Borel measurable selection of the map $(t,x)\to D^*u(t,x)$, the set of reachable gradients of $u$ at $(t,x)$, see \cite{CannarsaSinestrari}). 
Proposition \ref{prop:mbounded} in appendix states  that \eqref{MFG}-(ii) has a unique solution and that this solution has a density in $L^\infty$: thus $\bar m$ is in $L^\infty$, which shows that the pair $(\bar u,\bar m)$ is a solution of the MFG system \eqref{MFG}.

%A first consequence of this is that the cluster point $\bar \eta$ is independent of the chosen subsequence. Indeed, since from its very definition the dependence with respect to $\bar \eta$ of $J(x,\gamma, \bar \eta)$ is only through the family of measures $(\bar m(t)=e_t\sharp \bar \eta)$ and since $\bar m$ is uniquely defined, $J(x,\gamma, \bar \eta)$ is independent of the choice of the subsequence: we denote it $J(x,\gamma, \bar m)$. Then $\bar \gamma_x$ defined above is also independent of the subsequence, which characterizes $\bar \eta$ in a unique way thanks to \eqref{identifbareta}. Therefore the entire sequence $(\eta^n)$ converges to $\bar \eta$. 

In order to identify the cluster point $\bar \theta$, let us recall that $\theta^n$ is defined by 
$$
\theta^n= \bar \gamma^n_\cdot \sharp m_0,
$$
where, for any $x\in \states$, $\bar \gamma^n_x$ is a minimum of $J(x,\cdot, \eta^n)$ under the constraint $\gamma(0)=x$. As the criterion $J(x,\cdot, \eta^{n_i})$ $\Gamma-$converges to $J(x,\cdot, \bar \eta)$ and since at any point of differentiability of $\bar u(0,\cdot)$ the optimal solution $\bar \gamma_x$ is unique, standard compactness arguments show that $(\bar \gamma^{n_i}_x)$ converges to $\bar \gamma_x$ for a.e. $x\in \T^d$. Therefore $(\theta^{n_i})$ converges  to $\bar \gamma_\cdot \sharp m_0$, which is nothing but $\bar \eta$ by \eqref{identifbareta}. So we conclude that $\bar \theta=\bar \eta$.

Finally, if \eqref{mono} holds, then we claim that $\bar \eta$ is independent of the chosen subsequence. Indeed, since from its very definition the dependence with respect to $\bar \eta$ of $J(x,\gamma, \bar \eta)$ is only through the family of measures $(\bar m(t)=e_t\sharp \bar \eta)$ and since, by \eqref{mono}, there exists a unique solution to the MFG system and thus $\bar m$ is uniquely defined, $J(x,\gamma, \bar \eta)$ is independent of the choice of the subsequence. Then $\bar \gamma_x$ defined above is also independent of the subsequence, which characterizes $\bar \eta$ in a unique way thanks to \eqref{identifbareta}. Therefore the entire sequence $(\eta^n,\theta^n)$ converges to $(\bar \eta,\bar \eta)$. 
\end{proof}

\begin{Remark}\label{rem.eqmeas} The proof shows that a measure $\bar \eta\in \mes_0(\Gamma)$ which satisfies \eqref{eq.dPhideta} can be understood as the representation of a MFG equilibrium. Indeed, if we define $(\bar u,\bar m)$ as in \eqref{e.defbarmbaru}, then $(\bar u,\bar m)$ is a solution to the MFG system \eqref{MFG}. Conversely, if $(\bar u,\bar m)$ is a solution to the MFG system \eqref{MFG}, then the relation \eqref{identifbareta} identifies uniquely a measure $\bar \eta\in \mes_0(\Gamma)$. For this reason, we call such a measure an equilibrium measure. 
\end{Remark}

%%%%%%%%%%%%%%%%%%%%%%%%%%
\subsection{The Learning Procedure in $N$-Players games}
In this part we show that the Fictitious Play in the Mean Field Game with large (but finite) number of players $N \in \N$ converges in some sense to the equilibrium of our Mean Field Game with infinite number of players. For every $N \in \N$, fix a sequence of initial states $ x_1 ^N , x_2 ^N , \cdots , x_N ^N \in \states$ such that:
$$\lim_{N \to \infty} \dk ( m_0 ^N , m_0) = 0$$
where $\ds m_0 ^N = \frac{1}{N} \sum_{i+1}^N  \delta_{x_i^N}$ is the empirical measure associated with the $\{x^N_i\}_{i=1, \dots, N}$. As in the case of an infinite population, let us define the sequences $\eta^{n,N}, \theta ^{n,N} \in \mes (\Gamma)$, for $n \in \N ^*$ in the following way:
\begin{equation}\label{LMFGN}
\begin{split}
\eta ^{n+1,N} = \frac{1}{n+1} (\theta ^{1,N} + \theta ^{2,N} + \cdots + \theta ^{n+1,N}) \\
\theta ^{n+1,N} = \frac{1}{N} (\delta_{\gamma_{x_1 ^N} ^{n+1 , N}} + \delta_{\gamma_{x_2 ^N} ^{n+1,N}} + \cdots + \delta_{\gamma_{x_N ^N} ^{n+1,N}}) \\
\end{split}
\end{equation}
where $\gamma_{x_i ^N} ^{n+1,N}$ is an optimal path which minimizes $J(x_i ^N , \cdot , \eta ^{n,N} )$. As before  one can show that if
$$a^{n+1,N}:= -\frac{\delta \Phi}{\delta \eta} (\eta^{n,N})(\theta^{n+1,N}-\eta^{n,N}) = -\int_{\Gamma} J(\gamma(0) , \gamma , \eta^{n,N}) \d (\theta^{n+1,N} - \eta^{n,N})(\gamma)$$
$$= \int_{\Gamma} J(\gamma(0) , \gamma , \eta^{n,N}) \d \eta^{n,N} (\gamma) - \min_{\theta \in \mes(\Gamma), e_0\sharp \theta = m_0 ^N} \int_{\Gamma} J(\gamma(0) , \gamma , \eta^{n,N}) \d \theta(\gamma) ,$$
then we have $\lim_{n \to \infty} a^{n,N} = 0$. This proves that any accumulation distribution $\bar{\eta}^N$  of the sequence $\{ \eta^{n,N} \} _{n\in \N ^*}$ satisfies:
\begin{equation}\label{AccN}
\int_{\Gamma} J(\gamma (0) , \gamma , \bar{\eta}^N) \d \bar{\eta}^N(\gamma) = \min_{\theta \in \mes(\Gamma), e_0 \sharp \theta = m_0 ^N}  \int_{\Gamma} J(\gamma (0) , \gamma , \bar{\eta}^N) \d \theta(\gamma).
\end{equation}
So if $\ds \bar{\eta}^N = \frac{1}{N}(\bar{\eta}_{x_1}^N + \bar{\eta}_{x_2}^N + \cdots + \bar{\eta}_{x_N}^N)$ then
$$\text{supp} (\bar{\eta}_{x_i}^N ) \subseteq \text{argmin}_{\gamma(0)=x_i} J(x_i , \gamma , \bar{\eta}^N) .$$
Note that, in contrast with the case of an infinite population, this is not an equilibrium condition, since the deviation of a player changes the measure $\bar \eta^N$ as well. 

In the following Theorem we prove that any accumulation point $\bar{\eta}$ of $\{ \bar{\eta}_N \}$ satisfies:
\begin{equation}\label{e.kjhqsdbd}
\int_{\Gamma} J(\gamma(0) , \gamma , \bar{\eta}) \d \bar{\eta} (\gamma) = \min_{\theta \in \mes_0(\Gamma)} \int_{\Gamma} J(\gamma(0) , \gamma , \bar{\eta}) \d \theta(\gamma) ,
\end{equation}
where $\mes_0(\Gamma)$ is the set of measure $\theta\in \mes(\Gamma)$ such that $e_0\sharp \theta=m_0$. We have seen in Remark \ref{rem.eqmeas} that this condition characterizes an MFG equilibrium.

\begin{Theorem} Assume that   \eqref{hypm0}, \eqref{hyp:unifCv}, \eqref{hyp:Hcondsupp}, \eqref{regucondf} and \eqref{potpot2} hold.
Consider the Fictitious Play for the $N-$player game as described in \eqref{LMFGN} and let $\bar{\eta}^N$ by an accumulation distribution of $(\eta^{n,N})_{n\in \N}$. Then every accumulation point of pre-compact set of $\{ \bar{\eta}^N \}_{N \in \N}$ is an MFG equilibrium. 

If furthermore  the monotonicity condition \eqref{mono} holds, then $(\bar \eta^N)$ has a limit which is the MFG equilibrium. 
\end{Theorem}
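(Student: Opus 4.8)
The plan is to pass to the limit $N\to\infty$ in the finite-population optimality condition \eqref{AccN}, which is already granted for each accumulation distribution $\bar\eta^N$. First I would establish the pre-compactness of $\{\bar\eta^N\}_N$ asserted in the statement. Exactly as in Lemma \ref{LipschitzBound}, the value functions of the control problems $\inf_{\gamma(0)=x_i^N}J(x_i^N,\cdot,\eta^{n,N})$ are uniformly Lipschitz (the costs $f,g$ being uniformly bounded in $C^2,C^3$ by \eqref{boundC2}), so every optimal curve $\gamma^{n+1,N}_{x_i^N}$ has a Lipschitz constant bounded by a single $C$ independent of $i,n,N$. Hence each $\theta^{n,N}$, each $\eta^{n,N}$ and each accumulation point $\bar\eta^N$ is supported in the set of $C$-Lipschitz curves, a compact subset of $\Gamma$ by the Arzel\`a--Ascoli theorem. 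By Prokhorov's theorem $\{\bar\eta^N\}_N$ is tight, hence pre-compact, and the same bound yields the equi-Lipschitz estimate $\dk(e_t\sharp\bar\eta^N,e_{t'}\sharp\bar\eta^N)\le C|t-t'|$.

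Fix a subsequence $\bar\eta^{N_k}\to\bar\eta$. Since $e_0\sharp\bar\eta^{N_k}=m_0^{N_k}\to m_0$ and $e_0$ is continuous, $e_0\sharp\bar\eta=m_0$, so $\bar\eta\in\mes_0(\Gamma)$. The maps $t\mapsto e_t\sharp\bar\eta^{N_k}$ are equi-Lipschitz and converge pointwise to $t\mapsto e_t\sharp\bar\eta$, hence, by compactness of $\mes(\states)$, uniformly in $t$; writing $\epsilon_k:=\sup_{t}\dk(e_t\sharp\bar\eta^{N_k},e_t\sharp\bar\eta)\to 0$, the Lipschitz continuity of $f,g$ gives $|J(\gamma(0),\gamma,\bar\eta^{N_k})-J(\gamma(0),\gamma,\bar\eta)|\le C\epsilon_k$ for every $\gamma$ (the $L$-term cancels, and on non-$H^1$ curves both sides are $+\infty$). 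This uniform control is the tool that lets me replace the moving environment $\bar\eta^{N_k}$ by the fixed $\bar\eta$ everywhere.

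Now I would pass to the limit in each side of \eqref{AccN}. For the left-hand side, $\gamma\mapsto J(\gamma(0),\gamma,\bar\eta)$ is lower semicontinuous (as already noted) and bounded below, so the Portmanteau inequality together with the uniform bound above yields $\liminf_k\int_\Gamma J(\gamma(0),\gamma,\bar\eta^{N_k})\d\bar\eta^{N_k}\ge\int_\Gamma J(\gamma(0),\gamma,\bar\eta)\d\bar\eta$. For the right-hand side, I would rewrite it through the value function: by disintegration as in Lemma \ref{lem.optibargamma}, setting $\bar u^{N_k}(x):=\inf_{\gamma(0)=x}J(x,\gamma,\bar\eta^{N_k})$ and $\bar u(x):=\inf_{\gamma(0)=x}J(x,\gamma,\bar\eta)$, the minimum over $\theta$ with $e_0\sharp\theta=m_0^{N_k}$ equals $\int_{\states}\bar u^{N_k}\d m_0^{N_k}$, and similarly for the limit. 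The uniform estimate gives $\|\bar u^{N_k}-\bar u\|_\infty\le C\epsilon_k\to 0$, while $\bar u$ is uniformly Lipschitz; combining with $\dk(m_0^{N_k},m_0)\to 0$ I obtain $\int_{\states}\bar u^{N_k}\d m_0^{N_k}\to\int_{\states}\bar u\d m_0=\min_{\theta\in\mes_0(\Gamma)}\int_\Gamma J(\gamma(0),\gamma,\bar\eta)\d\theta$.

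Since \eqref{AccN} is an equality for each $k$, taking $\liminf$ on the left and the limit on the right gives $\int_\Gamma J(\gamma(0),\gamma,\bar\eta)\d\bar\eta\le\min_{\theta\in\mes_0(\Gamma)}\int_\Gamma J(\gamma(0),\gamma,\bar\eta)\d\theta$; as $\bar\eta\in\mes_0(\Gamma)$ is itself admissible the reverse inequality is trivial, so \eqref{e.kjhqsdbd} holds and, by Remark \ref{rem.eqmeas}, $\bar\eta$ is an MFG equilibrium. The main obstacle is precisely this asymmetry: the lower semicontinuity of $J$ delivers only a one-sided inequality for the left-hand side, so one cannot simply pass to the limit in the equality; the loop is closed only because the right-hand side converges genuinely (which forces the uniform-in-$t$ convergence of the marginals and uses the hypothesis $m_0^N\to m_0$) and because the limit $\bar\eta$ is feasible. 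Finally, under \eqref{mono} the MFG system has a unique solution, and by \eqref{identifbareta} the equilibrium measure is uniquely determined by it; every accumulation point of the pre-compact family $\{\bar\eta^N\}$ therefore coincides with this single measure, so $(\bar\eta^N)$ converges to the MFG equilibrium.
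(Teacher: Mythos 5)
Your proof is correct, and while it follows the paper's overall skeleton---pass to the limit in the finite-population optimality condition \eqref{AccN}, using lower semicontinuity of $J$ for the left-hand side and feasibility of the limit $\bar\eta$ to close the loop---it handles the right-hand side by a genuinely different device. The paper fixes an arbitrary competitor $\theta\in\mes_0(\Gamma)$ and manufactures admissible competitors for the $N$-player constraint by transporting initial points: it takes an optimal transport map $\tau_N$ from $m_0$ to $m_0^N$ (here the absolute continuity of $m_0$ is used), shifts each curve via $\xi_N(\gamma)=\gamma-\gamma(0)+\tau_N(\gamma(0))$, sets $\theta^N=\xi_N\sharp\theta$, and passes to the limit in $\int_\Gamma J(\gamma(0),\gamma,\bar\eta^N)\,\d\theta^N$ by dominated convergence, obtaining \eqref{Acc} competitor by competitor. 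You instead convert the constrained minimum into the integral of the value function against the initial marginal, $\min_{e_0\sharp\theta=m_0^{N_k}}\int_\Gamma J\,\d\theta=\int_{\states}\bar u^{N_k}\,\d m_0^{N_k}$, and converge the optimal \emph{values} directly, using $\|\bar u^{N_k}-\bar u\|_\infty\le C\epsilon_k$ (which follows from the same uniform-in-$t$ marginal convergence both proofs rely on) together with the Lipschitz continuity of $\bar u$ and $\dk(m_0^{N_k},m_0)\to 0$. Your route avoids the optimal-transport map and the dominated-convergence computation on shifted curves, treats all competitors at once, and does not need $m_0$ absolutely continuous for this particular step (it is still needed elsewhere, e.g.\ in Remark \ref{rem.eqmeas}); the paper's route stays entirely at the level of measures on curves without introducing value functions. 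You also supply the tightness argument for $\{\bar\eta^N\}$ via the uniform Lipschitz bound on optimal curves, which the paper leaves implicit in the phrase ``pre-compact set,'' and your treatment of the monotone case (all accumulation points coincide with the unique equilibrium measure identified by \eqref{identifbareta}) matches the paper's.
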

\begin{proof} Consider $\bar{\eta}$ as an accumulation point of the set $\{ \bar{\eta}^N \}_{N \in \N}$. It is sufficient to show that for every $\theta \in \mes (\Gamma)$ such that $e_0 \sharp \theta = m_0$, we have
\begin{equation}\label{Acc}
\int_{\Gamma} J(\gamma(0) , \gamma , \bar{\eta}) \d \bar{\eta}(\gamma) \leq \int_{\Gamma} J(\gamma(0) , \gamma , \bar{\eta}) \d \theta (\gamma).
\end{equation}
Since $m_0$ is absolutely continuous with respect to the Lebesgue measure, there exists an optimal transport map $\tau_N : \states \to \states$ such that:
$$ \tau_N \sharp m_0 = m_0 ^N ,\quad \dk (m_0 , m_0 ^N) = \mint |x- \tau_N(x)| \d m_0 (x)$$ 
(see \cite{AGS}). We define the functions $\xi_N : \Gamma \to \Gamma$ as follows:
$$\xi_N(\gamma) = \gamma -\gamma(0) + \tau_N(\gamma(0))$$
and set $\theta ^N = \xi_N \sharp \theta$. Then we have $$e_0 \sharp \theta ^N = e_0 \sharp (\xi_N \sharp \theta) = (e_0  \circ \xi_N) \sharp \theta = (\tau_N \circ e_0) \sharp \theta = \tau_N \sharp ( e_0 \sharp \theta) = \tau_N \sharp m_0 = m_0 ^N  .$$
Then the characterization \eqref{AccN} of $\bar{\eta}^N$ yields:
\be\label{Acc2}
\int_{\Gamma} J(\gamma(0) , \gamma , \bar{\eta}^N) \d \bar{\eta}^N(\gamma) \leq \int_{\Gamma} J(\gamma(0) , \gamma , \bar{\eta}^N) \d \theta^N (\gamma).\ee
By lower semicontinuity of $J$ we have 
$$
\int_{\Gamma} J(\gamma(0) , \gamma , \bar{\eta}) \d \bar{\eta}(\gamma)\leq \liminf_N
\int_{\Gamma} J(\gamma(0) , \gamma , \bar{\eta}^N) \d \bar{\eta}^N(\gamma).
$$
On the other hand, by the definition of $\xi^N$ and $\theta^N$ and  the decomposition $\theta=\int_{\T^d} \theta_xm_0(x)\d x$, we have 
$$
\begin{array}{ll}
\ds \int_{\Gamma} J(\gamma(0) , \gamma , \bar{\eta}^N) \d \theta^N (\gamma)\\ 
\qquad \ds = 
 \int_{\T^d}  \int_\Gamma (\int_0^T L(\gamma(t) -\gamma(0) + \tau_N(\gamma(0)) , \dot \gamma(t)) + f(\gamma(t) -\gamma(0) + \tau_N(\gamma(0)), e_t\sharp \bar \eta^N) \ \d t \\  
\qquad \ds \qquad \qquad \qquad \qquad \ds + g(\gamma(t) -\gamma(0) + \tau_N(\gamma(0)), e_T\sharp \bar \eta^N)) m_0(x) \d \theta_x (\gamma) \d x,
\end{array} $$
where, by dominate convergence, the right-hand side converges to the right-hand side of  \eqref{Acc}. 
So letting $N\to \infty$ in \eqref{Acc2} gives exactly \eqref{Acc}. 

Under \eqref{mono}, the MFG equilibrium is unique. Hence, for any $\epsilon > 0$ there exists $N_\ep \in \N$ such that for any $N > N_\ep$ and any accumulation point $\bar{\eta}^N$ we have $\dk (\bar{\eta} , \bar{\eta}^N ) <\epsilon$.
\end{proof}
\begin{Corollary} Assume \eqref{hypm0}, \eqref{hyp:unifCv}, \eqref{hyp:Hcondsupp}, \eqref{regucondf} and \eqref{potpot2} and \eqref{mono}.
Then, for any $\epsilon >0$ there is $N_\ep \in \N$ such that for any $N > N_\ep$,
$$ \exists n(N,\epsilon) \in \N: \quad \forall n > n(N,\epsilon): \quad \dk (\eta ^{n,N} , \bar \eta) < \epsilon, $$
where $\bar \eta$ is the MFG equilibrium. In other words, for every $\epsilon > 0$, one can reach to the $\epsilon-$neighborhood of the equilibrium point if the number of players $N$ is large enough. 
\end{Corollary}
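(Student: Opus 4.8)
The plan is to combine the two convergence results already established---the convergence in $N$ of the accumulation measures $\bar\eta^N$ and the pre-compactness in $n$ of each fixed sequence $(\eta^{n,N})_n$---into the desired two-parameter statement. The previous Theorem, in its monotone case, provides for each $\ep>0$ an integer $N_\ep$ such that for every $N>N_\ep$ \emph{any} accumulation point $\bar\eta^N$ of the sequence $(\eta^{n,N})_n$ lies within $\ep$ of the unique equilibrium measure $\bar\eta$ in the $\dk$ distance. What remains is a purely topological upgrade: from the fact that all accumulation points of the fixed sequence $(\eta^{n,N})_n$ are close to $\bar\eta$, one must deduce that the \emph{tail} of that sequence is close to $\bar\eta$. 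This is precisely the passage from subsequential to full-sequence control, and it is available because the sequence lives in a compact set.

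First I would fix $\ep>0$ and invoke the previous Theorem to obtain $N_\ep$ so that for all $N>N_\ep$ every accumulation point of $(\eta^{n,N})_n$ is within $\ep/2$ of $\bar\eta$. Next I would fix such an $N$ and record that $(\eta^{n,N})_n$ is pre-compact in $\mes(\Gamma)$: exactly as in Lemma \ref{LipschitzBound}, the value functions associated with the belief $\eta^{n,N}$ are uniformly Lipschitz, so the optimal curves $\gamma^{n+1,N}_{x_i^N}$ are uniformly (in both $i$ and $n$) Lipschitz. Hence all the measures $\theta^{n,N}$, and therefore all their averages $\eta^{n,N}$, are supported on a single compact set of uniformly Lipschitz curves (Arzel\`a--Ascoli), which makes $(\eta^{n,N})_n$ tight and thus pre-compact by Prokhorov.

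With pre-compactness in hand I would argue by contradiction. If $\dk(\eta^{n,N},\bar\eta)<\ep$ failed along a subsequence $(\eta^{n_k,N})_k$, pre-compactness would let me extract a further subsequence converging to some $\tilde\eta$; this $\tilde\eta$ is then an accumulation point of $(\eta^{n,N})_n$, so $\dk(\tilde\eta,\bar\eta)<\ep/2$ by the first step, while continuity of $\dk$ would force $\dk(\tilde\eta,\bar\eta)\geq\ep$, a contradiction. This yields the threshold $n(N,\ep)$ beyond which $\dk(\eta^{n,N},\bar\eta)<\ep$, as claimed. The only genuinely delicate point is this last compactness argument: closeness of \emph{every} accumulation point to $\bar\eta$ does not by itself control the tail of the sequence, so the conclusion truly relies on the uniform Lipschitz bound of Lemma \ref{LipschitzBound} to guarantee pre-compactness of the whole sequence $(\eta^{n,N})_n$ for each fixed $N$.
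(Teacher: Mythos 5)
Your proof is correct and takes essentially the approach the paper intends: the paper states the Corollary without a separate proof, as an immediate consequence of the preceding Theorem, whose proof ends with precisely your first step (for every $N>N_\ep$, \emph{any} accumulation point $\bar\eta^N$ of $(\eta^{n,N})_n$ satisfies $\dk(\bar\eta,\bar\eta^N)<\ep$). The remaining step you spell out---upgrading closeness of all accumulation points to closeness of the tail, via the uniform Lipschitz bound on optimal curves as in Lemma \ref{LipschitzBound}, tightness, and a subsequence--contradiction argument---is exactly the implicit argument the paper relies on, correctly executed.
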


%%%%%%%%%%%%%%%%%%%%%%%%%%%%
\appendix
\section{Well-posedness of a continuity equation} 

We consider the continuity equation
\begin{equation}\label{e.FP1erOrdre}
\left\{\begin{array}{l}
\partial_t m -{\rm div}(m D_pH(x,\nabla \bar u))=0\qquad {\rm in}\; (0,T)\times \T^d\\
m(0,x)=m_0(x). 
\end{array}\right.
\end{equation}
where $\bar u$  is the viscosity solution to 
$$
\left\{\begin{array}{l}
\ds -\partial_t u  +H(x,\nabla u(t,x)) =f(x,\bar m(t)), \quad (t,x)\in  [0,T]\times \states  \\
u(T,x)=g(x, m(T)), \quad  x\in \states
\end{array}\right.
$$
Let us recall that $\bar u$ is semi-concave. In \eqref{e.FP1erOrdre} we denote by $\nabla \bar u$ any fixed Borel measurable selection of the map $(t,x)\to D^*u(t,x)$ (the set of reachable gradients of $u$ at $(t,x)$, see \cite{CannarsaSinestrari}). 
The section is devoted to the proof of the following statement.

\begin{Proposition} \label{prop:mbounded} There exists a unique solution $\bar m$ of \eqref{e.FP1erOrdre} in the sense of distribution. Moreover $\bar m$ is absolutely continuous and satisfies
$$
\sup_{t\in [0,T]} \|\bar m(t,\cdot)\|_\infty \leq C.
$$
\end{Proposition}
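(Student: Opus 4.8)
The plan is to reduce the study of \eqref{e.FP1erOrdre} to that of the characteristic flow of the (bounded, Borel) velocity field $b(t,x):=-D_pH(x,\nabla\bar u(t,x))$, exploiting the semiconcavity of $\bar u$ at every step. Existence of at least one solution is essentially already contained in the proof of Theorem \ref{th.mainOrdre1}: taking $\bar\gamma_x$ the (a.e. unique) optimal trajectory issued from $x$ and $\Phi_t(x):=\bar\gamma_x(t)$, the push-forward $\bar m(t):=\Phi_t\sharp m_0$ solves \eqref{e.FP1erOrdre} in the sense of distributions, since along a.e. optimal trajectory one has $\dot{\bar\gamma}_x(t)=-D_pH(\bar\gamma_x(t),\nabla\bar u(t,\bar\gamma_x(t)))$ with $\nabla\bar u$ the reachable gradient. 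Hence the two substantial points are the $L^\infty$ bound and uniqueness.

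For the $L^\infty$ bound I would argue by regularization. Mollify $\bar u$ in space into $u^\varepsilon:=\bar u*\rho_\varepsilon$; since convolution with a nonnegative kernel of unit mass preserves semiconcavity, $u^\varepsilon$ is smooth and still satisfies $D^2 u^\varepsilon\le C I$ with the same constant, while $\nabla u^\varepsilon\to\nabla\bar u$ a.e. and in every $L^p$. The regularized velocity $b^\varepsilon:=-D_pH(\cdot,\nabla u^\varepsilon)$ is smooth, so the corresponding continuity equation has a classical solution $m^\varepsilon$, given by the push-forward along the smooth flow $\Phi^\varepsilon_t$. Along characteristics, $m^\varepsilon(t,\Phi^\varepsilon_t(x))=m_0(x)\exp\big(-\int_0^t(\mathrm{div}\,b^\varepsilon)(s,\Phi^\varepsilon_s(x))\,\d s\big)$, and the key computation is
$$
-\mathrm{div}\,b^\varepsilon=\mathrm{tr}\big(D^2_{xp}H\big)+\mathrm{tr}\big(D^2_{pp}H\cdot D^2 u^\varepsilon\big)\le C,
$$
where I use that $D^2_{xp}H$ is bounded (as $H\in{\mathcal C}^2$ and the $\nabla u^\varepsilon$ stay in a fixed compact set) and that, since $D^2_{pp}H\ge 0$ and $D^2 u^\varepsilon\le C I$, one has $\mathrm{tr}(D^2_{pp}H\cdot D^2 u^\varepsilon)\le C\,\mathrm{tr}(D^2_{pp}H)\le C$. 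This yields the uniform bound $\|m^\varepsilon(t)\|_\infty\le\|m_0\|_\infty e^{CT}$. I would then pass to the weak-$*$ limit: $b^\varepsilon\to b$ strongly in $L^p$ (bounded and a.e. convergent) while $m^\varepsilon\rightharpoonup m$ weakly-$*$, so $m^\varepsilon b^\varepsilon\rightharpoonup m b$ and $m$ solves \eqref{e.FP1erOrdre} with the same $L^\infty$ bound. Note that only the one-sided (upper) Hessian bound, i.e. exactly the semiconcavity of $\bar u$, enters here; no lower bound on $D^2\bar u$ is needed.

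For uniqueness the plan is to show that every solution coincides with the explicit push-forward $\Phi_t\sharp m_0$, and the natural tool is Ambrosio's superposition principle: since $b$ is bounded, any nonnegative distributional solution $m_t$ of \eqref{e.FP1erOrdre} with initial datum $m_0$ can be written as $m_t=e_t\sharp\Theta$, where $\Theta\in\mes(\Gamma)$ is concentrated on absolutely continuous integral curves of $b$ and $e_0\sharp\Theta=m_0$. The crux, and the main obstacle of the whole statement, is to prove that from $m_0$-a.e. starting point the integral curve of $b$ is unique. This is not a Cauchy--Lipschitz matter, as $b$ is merely bounded and Borel ($\nabla\bar u$ is only $BV$ through semiconcavity); instead I would exploit the optimal-control structure: at every point of differentiability of $\bar u(0,\cdot)$, hence $m_0$-a.e. since $m_0$ is absolutely continuous, the optimal trajectory $\bar\gamma_x$ is unique and is the unique integral curve of $b$ issued from $x$, a no-crossing property that is a direct consequence of the semiconcavity of $\bar u$ (cf. Theorems 6.4.7--6.4.9 of \cite{CannarsaSinestrari}). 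Uniqueness of characteristics a.e. forces $\Theta=\bar\gamma_\cdot\sharp m_0$, so $m_t=\Phi_t\sharp m_0$ is uniquely determined; combined with the previous step this also identifies the unique solution with the absolutely continuous push-forward satisfying the asserted bound. Alternatively, one may run a duality argument against the backward transport equation $\partial_t\psi+\lg b,\nabla\psi\rg=0$, whose solvability rests once more on the same a.e. uniqueness of characteristics, which is therefore the delicate point to make rigorous.
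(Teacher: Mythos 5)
Your proof is correct, and the uniqueness half is essentially the paper's own argument: Ambrosio's superposition principle applied to a nonnegative distributional solution, disintegration of the lifted measure over $m_0$, and the observation that at every differentiability point of $\bar u(0,\cdot)$ (hence $m_0$-a.e., since $m_0$ is absolutely continuous) the integral curve of $b=-D_pH(\cdot,\nabla\bar u)$ is unique and coincides with the unique optimal trajectory, by the optimal synthesis results of \cite{CannarsaSinestrari}; this forces $\eta_x=\delta_{\bar\gamma_x}$ and pins down the solution as $\bar\gamma_\cdot(t)\sharp m_0$, exactly as in the paper. Where you genuinely diverge is the existence/$L^\infty$ step. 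The paper proceeds by vanishing viscosity: it solves the $\ep$-viscous Hamilton--Jacobi and Fokker--Planck equations with the frozen data $\bar m$, imports the uniform bound $\|m^\ep\|_\infty\leq C$ from Theorem 5.1 of \cite{Cdga13}, uses uniform semiconcavity to get $\nabla u^\ep\to\nabla\bar u$ a.e., and passes to the weak-$*$ limit. You instead mollify $\bar u$ directly in space and run the Liouville formula along the regularized flow, with the one-sided divergence estimate $-\mathrm{div}\,b^\ep=\mathrm{tr}(D^2_{xp}H)+\mathrm{tr}(D^2_{pp}H\,D^2u^\ep)\leq C$ coming from the preservation of semiconcavity under convolution together with $0\leq D^2_{pp}H\leq \bar C I_d$ (your trace inequality $\mathrm{tr}(AB)\leq C\,\mathrm{tr}(A)$ for $A\geq 0$, $B\leq CI_d$ is sound). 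This is a legitimate and more self-contained route: it makes explicit the mechanism (uniform semiconcavity controls the compression of the flow) that in the paper is hidden inside the citation of \cite{Cdga13}, and it avoids solving any auxiliary parabolic system. The mild price is that the mollified field is smooth in $x$ but only continuous in $t$, so you must invoke the Carath\'eodory theory for the flow and the a.e.-in-$t$ Liouville formula, and check the limit passage $m^\ep b^\ep\rightharpoonup mb$ via a.e. convergence of $\nabla u^\ep$ at Lebesgue points plus dominated convergence---all of which you flag correctly. Note also that your push-forward construction at the start is, strictly speaking, redundant: the weak-$*$ limit of your $m^\ep$ already furnishes a bounded solution, and uniqueness then identifies it with the push-forward, which is the same logical structure as in the paper.
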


The difficulty for the proof comes from the fact that the vector field $-D_pH(t,x,\nabla u)$ is not smooth: it is even discontinuous in general. The analysis of transport equations with non smooth vector fields  has attracted a lot of attention since the DiPerna-Lions seminal paper \cite{DL89}. We face here a simple situation where the vector field generates almost everywhere a unique solution. Nevertheless uniqueness of solution of the associated continuity equation requires the combination of several arguments. We rely here on Ambrosio's approach \cite{Ambrosio, Ambrosio2}, in particular for the ``superposition principle'' (see Theorem \ref{theo:superposition} below).  

Let us start with the existence of a bounded solution to \eqref{e.FP1erOrdre}: this is the easy part. 

\begin{Lemma} There exists a solution to \eqref{e.FP1erOrdre} which belongs to $L^\infty$. 
\end{Lemma}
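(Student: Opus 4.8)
The plan is to build the solution by vanishing viscosity, the point being that the bound $\sup_{t}\|\bar m(t,\cdot)\|_\infty\leq C$ propagates from smooth approximations through a maximum principle argument relying only on the (uniform) semiconcavity of the value function. For $\ep\in(0,1]$ I would first introduce the classical solution $u^\ep$ of the viscous Hamilton--Jacobi equation
\begin{equation*}
-\partial_t u^\ep-\ep\Delta u^\ep+H(x,\nabla u^\ep)=f(x,\bar m(t)),\qquad u^\ep(T,x)=g(x,\bar m(T)).
\end{equation*}
Thanks to \eqref{hyp:unifCv} and the $\mathcal C^2$/$\mathcal C^3$ bounds on $f,g$ in \eqref{boundC2}, the $u^\ep$ are uniformly Lipschitz (see the appendix of \cite{Cdga13}) and uniformly semiconcave, with constants independent of $\ep$, and $u^\ep\to\bar u$ locally uniformly as $\ep\to0$. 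I would then let $m^\ep$ be the smooth, positive, mass-preserving solution of the viscous continuity equation
\begin{equation*}
\partial_t m^\ep-\ep\Delta m^\ep-{\rm div}\big(m^\ep D_pH(x,\nabla u^\ep)\big)=0,\qquad m^\ep(0)=m_0.
\end{equation*}

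The crucial step is the uniform bound $\|m^\ep(t)\|_\infty\leq\|m_0\|_\infty e^{CT}$. Writing the equation as $\partial_t m^\ep=\ep\Delta m^\ep+D_pH\cdot\nabla m^\ep+m^\ep\,{\rm div}(D_pH(x,\nabla u^\ep))$ and evaluating at a spatial maximum of $m^\ep(t,\cdot)$, the viscous term is $\leq0$ and the transport term vanishes, leaving $\partial_t m^\ep\leq m^\ep\,{\rm div}(D_pH(x,\nabla u^\ep))$. The divergence equals ${\rm tr}\,D_{xp}H+{\rm tr}\big(D^2_{pp}H\,D^2u^\ep\big)$; the first term is bounded since $H\in\mathcal C^2$ and $\nabla u^\ep$ is uniformly bounded, while for the second I would use $D^2_{pp}H\geq0$ together with the semiconcavity bound $D^2u^\ep\leq C I_d$ to obtain ${\rm tr}(D^2_{pp}H\,D^2u^\ep)\leq C\,{\rm tr}(D^2_{pp}H)\leq C$, all uniformly in $\ep$. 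A Gr\"onwall argument applied to $M(t)=\max_x m^\ep(t,x)$ then gives the claimed bound.

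Finally I would pass to the limit. By the uniform bound, up to a subsequence $m^\ep\rightharpoonup\bar m$ weakly-$*$ in $L^\infty((0,T)\times\states)$, while $D_pH(x,\nabla u^\ep)\to D_pH(x,\nabla\bar u)$ almost everywhere and boundedly: indeed $\bar u$, being Lipschitz and semiconcave, is differentiable a.e., and at each such point the reachable gradient is the genuine gradient, to which $\nabla u^\ep$ converges. By dominated convergence the drift converges strongly in every $L^p$, $p<\infty$, so the product $m^\ep D_pH(x,\nabla u^\ep)$ converges to $\bar m D_pH(x,\nabla\bar u)$ in the sense of distributions (weak-$*$ times strong), and $\ep\Delta m^\ep\to0$ in distributions as well; passing to the limit in the weak formulation shows that $\bar m$ solves \eqref{e.FP1erOrdre} and lies in $L^\infty$.

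The main obstacle is to ensure that the a.e. limit of $\nabla u^\ep$ coincides with the measurable selection of reachable gradients appearing in \eqref{e.FP1erOrdre}. This is exactly where the semiconcavity of $\bar u$ is essential: it forces differentiability almost everywhere, hence a single reachable gradient a.e., and it is the step where I would invoke the standard theory of \cite{CannarsaSinestrari} (both for the a.e.\ differentiability and for the stability of gradients of uniformly semiconcave functions under uniform convergence). The remaining manipulations---existence and smoothness of $m^\ep$, mass conservation, and the weak-strong convergence of the product---are routine.
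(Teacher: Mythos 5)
Your proposal is correct and follows essentially the same route as the paper: the paper's proof also proceeds by vanishing viscosity with the same auxiliary system $(u^\ep,m^\ep)$, citing Theorem 5.1 of \cite{Cdga13} for the uniform $L^\infty$ bound on $m^\ep$ (which you re-derive via the maximum principle and uniform semiconcavity), and likewise uses the uniform bound and a.e.\ convergence of $\nabla u^\ep$ to $\nabla \bar u$ by semiconcavity before passing to the $L^\infty$-weak* limit. You have merely filled in the details the paper delegates to the cited reference, including the identification of the a.e.\ limit with the Borel selection of reachable gradients.
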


\begin{proof} We follow (at least partially) the perturbation argument given in the proof of Theorem 5.1 of \cite{Cdga13}. For $\ep>0$, let $(u^\ep,m^\ep)$ be the unique classical solution to 
$$
\left\{\begin{array}{l}
-\partial_t u^\ep -\ep\Delta u^\ep +H(x,\nabla u^\ep)= f(x,\bar m(t))\qquad {\rm in}\; (0,T)\times \T^d\\
\partial_t m^\ep-\ep\Delta m^\ep -{\rm div}(m^\ep D_pH(x,\nabla u^\ep))=0\qquad {\rm in}\; (0,T)\times \T^d\\
m^\ep(0,x)=m_0(x), \; u^\ep(T,x)= g(x,\bar m(t))\qquad {\rm in}\;  \T^d
\end{array}\right.
$$
Following the same argument as in \cite{Cdga13}, we know that the $(m^\ep)$ are uniformly bounded in $L^\infty$: there exists $C>0$ such that 
$$
\|m^\ep\|_\infty\leq C \qquad \forall \ep>0.
$$
Moreover (by semi-concavity) the $(\nabla u^\ep)$ are uniformly bounded and converge a.e. to $\nabla \bar u$ as $\ep$ tends to $0$. 
Letting $\ep\to 0$, we can extract a subsequence such that $m^\ep$ converges in $L^\infty-$weak* to a solution $m$ of \eqref{e.FP1erOrdre}.  
\end{proof}

The difficult part of the proof of Proposition \ref{prop:mbounded} is to check that the solution to \eqref{e.FP1erOrdre} is unique.
Let us first point out some basic properties of the solution $\bar u$: we already explained that $\bar u$ is Lipschitz continuous and semiconcave in space for any $t$, with a modulus bounded independently of $t$. We will repetitively use the fact that $\bar u$  can be represented  as the value function of a problem of calculus of variation: 
\be\label{RepForm}
\bar u(t,x)= \inf_{\gamma, \ \gamma(t)=x} \int_t^T \tilde L(s,\gamma(s),\dot \gamma(s),\bar m(s))ds + \tilde g(\gamma(T))
\ee
where we have set, for simplicity of notation,
$$
\tilde L(s,x,v) = L(x,v)+ f(x,\bar m(s)), \qquad  \tilde g(x)= g(x, \bar m(T)).
$$
For $(t,x)\in [0,T)\times \T^d$ we denote by ${\mathcal A}(t,x)$ the set of optimal trajectories for the control problem \eqref{RepForm}. 
%One easily checks that such set is nonempty, and that, if $(t_n,x_n)\to (t,x)$ and 
%$\gamma_n\in {\mathcal A}(t_n,x_n)$, then, up to some subsequence, $(\gamma_n)$ weakly converges in $H^1$ to
%some $\gamma\in {\mathcal A}(t,x)$. 

%Let us recall that, if $\gamma$ belongs to ${\mathcal A}(t,x)$, then $\gamma$ satisfies the Euler-Lagrange optimality condition: $\gamma$ is of class ${\mathcal C}^2$ on $[t,T]$ with
%\be\label{EulerCond}
%\frac{d}{dt} \; D_p\tilde L(s,\gamma(s),\dot\gamma(s)) = 
%D_x\tilde L(s,\gamma(s),\dot\gamma(s)) \qquad \forall s\in [t,T], 
%\ee
%and terminal condition
%\be\label{EulerCondTC}
%\qquad \dot \gamma(T)=-D_p\tilde H(T,\gamma(T), D\tilde g(\gamma(T))). 
%\ee

We need to analyze precisely the connexion between the differentiability of $\bar u$ with respect to the $x$ variable and
the uniqueness of the minimizer in  (\ref{RepForm}) (see \cite{CannarsaSinestrari}, Theorems 6.4.7 and 6.4.9 and Corollary 6.4.10).  Let $(t,x)\in [0,T]\times \T^d$ and $\gamma\in \Gamma$.  Then 
\begin{enumerate}
\item (Uniqueness of the optimal control along optimal trajectories) Assume that $\gamma \in {\mathcal A}(t,x)$. Then, for any $s\in (t,T]$, $\bar u(s,\cdot)$ is differentiable at $\gamma(s)$ for $s\in (t,T)$ and one has $\dot \gamma(s)= -D_p H(\gamma(s),\nabla u(s,\gamma(s)))$.

\item (Uniqueness of the optimal trajectories) 
$\nabla u(t,x)$ exists if and only if ${\mathcal A}(t,x)$ is a reduced to singleton. In this case, $\dot \gamma(t)= -D_p H(x,\nabla \bar u(t,x))$
where ${\mathcal A}(t,x)=\{\gamma\}$. 

\item (Optimal synthesis) conversely, if $\gamma(\cdot)$ is an absolutely continuous
solution of the differential equation
\be\label{eqdiff-Du}
\left\{\begin{array}{l}
\dot \gamma(s)=-D_p H(s,\gamma(s),\nabla \bar u(s,\gamma(s))) \qquad \mbox{\rm a.e. in }[t,T]\\
\gamma(t)=x,
\end{array}\right.
\ee
then the trajectory $\gamma$  is optimal for $\bar u(t,x)$. In particular, if $\bar u(t, \cdot)$ is differentiable at $x$, then equation (\ref{eqdiff-Du}) has a unique solution, corresponding to the optimal trajectory.
\end{enumerate}

The next ingredient is Ambrosio's superposition principle, which says that any weak solution to the transport equation
\be\label{eq:transport}
\partial_t \mu -{\rm div}(\mu D_p H(x,\nabla \bar u))=0\qquad {\rm in}\; (0,T)\times \T^d
\ee
can be represented by a measure on the space of trajectories of the ODE 
\be\label{ODE}
\dot \gamma (s)= -D_p H( \gamma(s), \nabla \bar u(s,\gamma(s)).
\ee
%For this let us define for any $t\in [0,T]$ the map $e_t:C^0([0,T],\T^d)\to \T^d$ by $e_t(\gamma)= \gamma(t)$ for $\gamma \in C^0([0,T],\T^d)$. 

\begin{Theorem}[Ambrosio superposition principle]\label{theo:superposition}  Let $\mu$ be a solution to \eqref{eq:transport}. Then there exists a Borel probability measure $\eta$ on $C^0([0,T],\T^d)$ such that $\mu(t)= e_t\sharp \eta$ for any $t$ and, for $\eta-$a.e. $\gamma\in C^0([0,T], \T^d)$, $\gamma$ is a solution to the ODE \eqref{ODE}.  
\end{Theorem}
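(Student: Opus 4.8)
The plan is to follow Ambrosio's original approximation scheme \cite{Ambrosio,Ambrosio2}, adapted to the present situation where the drift $b(t,x):=-D_pH(x,\nabla \bar u(t,x))$ is bounded. Indeed, since $\bar u$ is Lipschitz continuous in space uniformly in $t$, its reachable gradients are uniformly bounded and $D_pH$ is continuous, so there is $M>0$ with $|b(t,x)|\leq M$ a.e. In particular $\int_0^T\mint |b(t,x)|\,\d \mu(t)(x)\,\d t\leq MT<+\infty$, which is exactly the integrability needed to run the superposition argument (after selecting the narrowly continuous representative of $t\mapsto\mu(t)$). The idea is then to regularize $b$ into smooth fields, represent the regularized solutions by their classical flow, and pass to the limit.

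First I would mollify. Let $\rho^\ep$ be a standard mollifier on $\T^d$ and set $\mu^\ep(t):=\rho^\ep * \mu(t)$ together with the regularized drift $b^\ep(t,\cdot):=(\rho^\ep*(b(t,\cdot)\mu(t)))/\mu^\ep(t)$. A direct computation shows that $(\mu^\ep,b^\ep)$ still solves the continuity equation $\partial_t\mu^\ep-{\rm div}(\mu^\ep b^\ep)=0$, that $\mu^\ep(t)$ has a smooth positive density, and that $|b^\ep|\leq M$ (since $|b|\le M$ gives $|\rho^\ep*(b\mu)|\le M\rho^\ep*\mu=M\mu^\ep$). As $b^\ep$ is smooth in $x$ and bounded, the Cauchy--Lipschitz flow $X^\ep(t,x)$ solving $\dot X^\ep=b^\ep(t,X^\ep)$, $X^\ep(0,x)=x$, is well defined; setting $\Phi^\ep(x):=X^\ep(\cdot,x)\in\Gamma$ and $\eta^\ep:=\Phi^\ep\sharp \mu^\ep(0)$, one obtains a probability measure on $\Gamma$ concentrated on solutions of $\dot\gamma=b^\ep(t,\gamma)$, and by uniqueness for the regularized continuity equation $e_t\sharp\eta^\ep=\mu^\ep(t)$ for every $t$.

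Next I would establish compactness and pass to the limit. Since every $\gamma$ charged by $\eta^\ep$ satisfies $|\dot\gamma|=|b^\ep(t,\gamma)|\leq M$, the supports of the $\eta^\ep$ lie in a fixed set of $M$-Lipschitz curves, which is compact in $\Gamma$ by Ascoli; hence $\{\eta^\ep\}$ is tight and a subsequence converges weakly to some $\eta\in\mes(\Gamma)$. Testing against continuous functions of $\gamma(t)$ and using $\mu^\ep(t)\rightharpoonup\mu(t)$ yields $e_t\sharp\eta=\mu(t)$ for all $t$, which is the first assertion.

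The main obstacle is the final step: showing that $\eta$-a.e. $\gamma$ solves the limiting ODE \eqref{ODE}, whose drift $b$ is only measurable (indeed discontinuous in general), so one cannot pass to the limit curve-by-curve in $\dot\gamma=b^\ep(t,\gamma)$. Here I would use the integrated formulation $\gamma(t)=\gamma(0)+\int_0^t b(s,\gamma(s))\,\d s$ together with the aggregate identity $\int_\Gamma\int_0^T|b^\ep-b|(s,\gamma(s))\,\d s\,\d\eta^\ep=\int_0^T\mint|b^\ep-b|(s,x)\,\d\mu^\ep(s)(x)\,\d s$, whose right-hand side tends to $0$ by the standard mollification estimate for $(\rho^\ep*(b\mu))/(\rho^\ep*\mu)$. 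To handle the discontinuous $b$ in the limit I would first replace it by a continuous field $w$ close to $b$ in $L^1(\d\mu(t)\,\d t)$: for such $w$ the functional $\gamma\mapsto |\gamma(t)-\gamma(0)-\int_0^t w(s,\gamma(s))\,\d s|\wedge 1$ is continuous and bounded on $\Gamma$, hence passes to the weak limit. Combining this with the two $L^1$ estimates (for $b^\ep$ versus $b$, and for $b$ versus $w$) and letting $w\to b$ gives $\int_\Gamma(|\gamma(t)-\gamma(0)-\int_0^t b(s,\gamma(s))\,\d s|\wedge 1)\,\d\eta=0$ for a.e. $t$, hence for all $t$ by continuity in $t$, which is precisely the claim. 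Finally, since $\bar u(t,\cdot)$ is differentiable $\mu(t)$-a.e. and the optimal synthesis property recalled above ensures uniqueness of the integral curve of $b$ through a.e. starting point, the representing measure $\eta$ is in fact uniquely determined by $\mu(0)=m_0$, which is what the uniqueness part of Proposition \ref{prop:mbounded} requires.
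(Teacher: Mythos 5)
The paper does not prove Theorem \ref{theo:superposition} at all: it is quoted as a black box, with the pointer ``see, for instance, Theorem 8.2.1 from \cite{AGS}''. Your proposal is therefore not an alternative to an argument in the paper but a reconstruction of the cited proof, and it is indeed the standard Ambrosio regularization scheme: mollify $(\mu,b)$ into $(\mu^\ep,b^\ep)$ with $b^\ep\mu^\ep=\rho^\ep*(b\mu)$, represent the regularized solution by its Cauchy--Lipschitz flow, get tightness from the uniform bound $|b^\ep|\le M$ (valid here because $\bar u$ is uniformly Lipschitz in space, so the drift $-D_pH(x,\nabla\bar u)$ is bounded), and pass to the limit in the integrated ODE through continuous test fields $w$. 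The overall structure is correct and matches \cite{Ambrosio, AGS}.

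Two points deserve tightening. First, with a compactly supported mollifier $\mu^\ep(t)$ need not have a positive density, so $b^\ep$ need not be globally smooth; on the torus the harmless fix is to mollify with the strictly positive heat kernel, or to replace $\mu^\ep(t)$ by $(1-\ep)\,\rho^\ep*\mu(t)+\ep\,\d x$ with the corresponding correction of the drift. Second, your intermediate claim that $\int_0^T\mint|b^\ep-b|\,\d\mu^\ep(s)\,\d s\to0$ is \emph{not} the standard mollification estimate and is dubious when $b$ is merely Borel and $\mu(s)$ is not known a priori to be absolutely continuous: what the convolution structure actually gives is $\limsup_\ep\int_0^T\mint|b^\ep-w|\,\d\mu^\ep(s)\,\d s\le\int_0^T\mint|b-w|\,\d\mu(s)\,\d s$ for every \emph{continuous} $w$, via $|\rho^\ep*(b\mu)-\rho^\ep*(w\mu)|\le\rho^\ep*(|b-w|\mu)$ and uniform continuity of $w$. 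Combined with the identity $\int_\Gamma\int_0^t|b-w|(s,\gamma(s))\,\d s\,\d\eta(\gamma)=\int_0^t\mint|b-w|\,\d\mu(s)\,\d s$ on the limit side and the density of continuous fields in $L^1(\d\mu(t)\,\d t)$, this closes the argument exactly along the lines you give in the second half of that step; so the overstated claim is redundant rather than fatal, but in a written proof it should be deleted and the estimate routed entirely through $w$. Finally, your closing sentence on the uniqueness of the representing measure $\eta$ is not part of the statement of Theorem \ref{theo:superposition}: it belongs to the proof of Proposition \ref{prop:mbounded}, where the paper disintegrates $\eta$ over the initial positions and invokes the optimal synthesis to identify $\eta_x=\delta_{\bar\gamma_x}$ for $m_0$-a.e.\ $x$.
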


See, for instance, Theorem 8.2.1. from \cite{AGS}. \\

%We will also need the notion of disintegration of a measure. 
%
%\begin{Theorem} \label{theo:desintegration}
%Let $X$ and $Y$ be two Polish spaces and $\lambda$ be a Borel probability measure on $X\times Y$. 
%Let us set $\mu=\pi_X\sharp \lambda$, where $\pi_X$ is the standard projection from $X\times Y$ onto $X$. 
%Then there exists a $\mu$-almost everywhere uniquely determined family of Borel probability measures $(\lambda_x)$ on $Y$ such that
%\begin{enumerate}
%\item the function $x \mapsto \lambda_{x}$ is Borel measurable, in the sense that $x \mapsto \lambda_{x} (B)$
% is a Borel-measurable function for each Borel-measurable set $B \subset Y$, 
% \item for every Borel-measurable function $f : X\times Y \to [0, +\infty]$,
%$$
%        \int_{X\times Y} f(x,y) d\lambda(x,y)\,  =\, \int_{X} \int_{Y} f(x,y) \, \mathrm{d} \lambda_{x} (y) \mathrm{d} \nu (x).
%$$
%\end{enumerate}
%\end{Theorem}
%
%See for instance the monograph \cite{AGS}. 
%
%

We are now ready to prove the uniqueness part of the result: 

\begin{proof}[Proof of Proposition \ref{prop:mbounded}.] Let $\mu$ be a solution of the transport equation \eqref{eq:transport}. From Ambrosio superposition principle, there exists a Borel probability measure $\eta$ on $C^0([0,T],\T^d)$ such that $\mu(t)= e_t\sharp \eta$ for any $t$ and, for $\eta-$a.e. $\gamma\in C^0([0,T], \T^d)$, $\gamma$ is a solution to the ODE $\dot \gamma = - D_pH(t,\gamma(t), \nabla u(t,\gamma(t)))$. As $m_0= e_0\sharp \eta$, we can disintegrate the measure $\eta$ into $\eta= \int_{\states} \eta_xdm_0(x)$, where $\gamma(0)=x$ for $\eta_x-$a.e. $\gamma$ and $m_0-$a.e. $x\in \T^d$. 
Since  $m_0$ is absolutely continuous,  for $m_0-$a.e. $x\in \T^d$, $\eta_x-$a.e. map $\gamma$ is a solution to the ODE starting from $x$. By the optimal synthesis explained above, such a solution $\gamma$ is optimal for the calculus of variation problem \eqref{RepForm}. As, moreover, for a.e. $x\in \T^d$ the solution of this problem is reduced to a singleton $\{\bar \gamma_x\}$, we can conclude that $\d \eta_x(\gamma)=\delta_{\bar \gamma_x}$ for $m_0-$a.e. $x\in \T^d$. Hence, for any continuous map $\phi:\T^d\to \R$, one has  
$$
\int_{\states}\phi(x) m(t,x))\d x= \int_{\states} \phi(\bar \gamma_x(t))m_0(x)\d x
$$
which defines $\mu$ in a unique way. 
\end{proof}

\end{document}